\newtheorem{thm}{Theorem}[section]
\newtheorem{cor}[thm]{Corollary}
\newtheorem{lem}[thm]{Lemma}
\newtheorem{defn}[thm]{Definition}
\newtheorem{prop}[thm]{Proposition}
\newtheorem{rem}[thm]{Remark}
\newtheorem{ex}[thm]{Example}
\newtheorem{problem}[thm]{Problem}
\renewcommand{\tilde}{\widetilde}
\newcommand{\C}{\mathbb{C}}
\newcommand{\R}{\mathbb{R}}
\newcommand{\Z}{\mathbb{Z}}
\newcommand{\B}{\mathcal{B}}
\DeclareMathOperator{\Cone}{Cone}
\newcommand{\GL}{\operatorname{GL}}
\newcommand{\T}{\operatorname{t}}
\numberwithin{equation}{section}
\title{Toric varieties of Schr\"{o}der type}
\author[JiSun Huh]{JiSun Huh}
\address[J. Huh]{Department of Mathematics, Ajou University, Suwon 16499, Republic of Korea}
\email{hyunyjia@ajou.ac.kr}
\author[Seonjeong Park]{Seonjeong Park}
\address[S. Park]{Department of Mathematics Education, Jeonju University, Jeonju 55069, Republic of Korea}
\email{seonjeongpark@jj.ac.kr}
\thanks{This work was supported by NRF-2020R1A2C1A01011045 and  NRF-2021R1A6A1A10044950}
\date{\today}
\subjclass[2020]{Primary: 14M25, 57S12; Secondary: 05C60}
\keywords{toric variety, polygon dissection, Schr\"{o}der tree, generalized Bott manifold}
\begin{document}

\maketitle

\begin{abstract}
    A dissection of a polygon is obtained by drawing diagonals such that no two diagonals intersect in their interiors. In this paper, we define a toric variety of Schr\"{o}der type as a smooth toric variety associated with a polygon dissection. Toric varieties of Schr\"{o}der type are Fano generalized Bott manifolds, and they are isomorphic if and only if the associated Schr\"{o}der trees are the same as unordered rooted trees. We describe the cohomology ring of a toric variety of Schr\"{o}der type using the associated Schr\"{o}der tree and discuss the cohomological rigidity problem.
\end{abstract}

\section{Introduction}

Let $\mathsf{P}_{n+2}$ be a regular polygon with $n+2$ vertices.
A \emph{dissection} $D$ of $\mathsf{P}_{n+2}$ is obtained by drawing diagonals such that no two diagonals intersect in their interiors. 
A dissection $D$ of $\mathsf{P}_{n+2}$ is called a \emph{$k$-dissection} if it divides $\mathsf{P}_{n+2}$ into $k$  polygons of smaller size. Note that $k$ varies from $1$ to $n$, and an $n$-dissection of $\mathsf{P}_{n+2}$ is called a \emph{triangulation}.

Let \( f(n,k) \) be the number of \( k \)-dissections of \( \mathsf{P}_{n+2}\) for  \( 1\leq k \leq n \). Cayley \cite{Cayley} proved that  \( f(n,k)=\frac{1}{k} \binom{n-1}{k-1} \binom{n+k}{k-1} \), and the number \( f(n,k) \) is called  the
Kirkman-Cayley number. There are a number of equivalent ways to represent polygon dissections: as binary bracketings, \L ukasiewicz words, standard Young tableaux of shape \( (k,k,1^{n-k}) \), and rooted plane trees whose non-leaf vertices have at least two children. (See \cite{Ether, Cayley59, Luka, Schroder, Stanley96}.) The last one is called a \emph{Schr\"{o}der tree} because the total number of these trees with \( n+1 \) leaves, \(\sum_{1\leq k \leq n} f(n,k) \), is the \( (n+1) \)st \emph{small Schr\"{o}der number} \( s_{n+1} \). 
Note that \( f(n, n) \) is just the Catalan number \( C_{n} \), and the Schr\"{o}der trees corresponding to triangulations of a polygon are full binary rooted trees. 

A toric variety of (complex) dimension~$n$ is a normal complex algebraic variety containing a torus $T=(\C^\ast)^n$ as an open dense subset, such that the action of $T$ on itself extends to the whole variety. A typical example of a smooth compact toric variety is the projective space $\C P^n$ of complex dimension $n$ with the standard action of $T$.

We construct a smooth toric variety $X_D$ associated with a dissection $D$ of the polygon $\mathsf{P}_{n+2}$ inductively. We first associate the projective space $\mathbb{C}P^n$ with the polygon $\mathsf{P}_{n+2}$. If $\widetilde{D}$ is a dissection of $\mathsf{P}_{n+2}$ obtained from $D$ by drawing a new diagonal, then the toric variety $X_{\widetilde{D}}$ associated with $\widetilde{D}$ is the blow up of $X_D$ along a certain $T$-invariant submanifold, see Section~\ref{sec:construction} for the precise definition. We say that $X_D$ is \emph{of Schr\"{o}der type}. 

Recall that a toric variety of Catalan type introduced in~\cite{LMP_Catalan} is the toric variety associated with a triangulation of a polygon. When $D$ is a triangulation of a polygon, the fan $\Sigma_D$ in our construction is the same as that in~\cite{LMP_Catalan}, so our toric varieties of Schr\"{o}der type contain the toric varieties of Catalan type.

Every toric variety of Schr\"{o}der type is a generalized Bott manifold, the total space of an iterated complex projective space bundle over a point. If $D$ is a $k$-dissection of $\mathsf{P}_{n+2}$, then $X_D$ is a $k$-stage generalized Bott manifold of dimension~$n$. Furthermore, each toric variety of Schr\"{o}der type is Fano.

Let $\tau(D)$ be the Schr\"{o}der tree associated with a polygon dissection~$D$. Using the Fano condition of $X_D$, we can recover the Schr\"{o}der tree $\tau(D)$ and prove the following.  

\begin{thm}[Theorem~\ref{thm:Fano} and Corollary~\ref{cor:number}]
For two dissections $D$ and $\widetilde{D}$,
the toric varieties $X_D$ and $X_{\widetilde{D}}$ of Schr\"{o}der type are isomorphic as varieties if and only if the associated Schr\"{o}der trees $\tau(D)$ and $\tau(\widetilde{D})$ are isomorphic as unodered rooted trees, and the number of isomorphism classes of $n$-dimensional toric varieties of Schr\"{o}der type is equal to the number of series-reduced rooted trees with $n+1$ leaves.
\end{thm}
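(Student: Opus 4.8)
The plan is to establish the isomorphism criterion in both directions and then read off the enumeration as a formal consequence.

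\emph{From trees to varieties.} First I would show that isomorphic unordered trees yield isomorphic varieties. Each $X_D$ is an iterated projective bundle whose Bott data is read off from $\tau(D)$ one stage at a time, and at every stage the fiber is a projectivization $\p\bigl(\bigoplus_i \mathcal{O}(\va_i)\bigr)$ of a sum of line bundles. Projectivization is unaffected by permuting the summands (and by tensoring them all with a common line bundle), so permuting the children of a vertex of $\tau(D)$ merely relabels the data at the relevant stage and yields an isomorphic generalized Bott manifold. Hence the isomorphism type of $X_D$ depends on $\tau(D)$ only through its class as an \emph{unordered} rooted tree, which is the ``if'' direction.

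\emph{From varieties to trees.} The harder direction is to recover $\tau(D)$ from $X_D$, and I would argue in two steps. First, since $X_D$ is a smooth complete toric variety, its automorphism group is a linear algebraic group in which all maximal tori are conjugate and the big torus is maximal; composing a given isomorphism $X_D \xrightarrow{\sim} X_{\widetilde{D}}$ with a suitable automorphism therefore makes it equivariant, and an equivariant isomorphism of toric varieties is precisely an isomorphism of fans. Thus $X_D \cong X_{\widetilde{D}}$ forces $\Sigma_D \cong \Sigma_{\widetilde{D}}$ up to a lattice automorphism in $\GL_n(\Z)$. Second — and this is where the Fano hypothesis does the real work — I would show that $\tau(D)$, as an unordered rooted tree, is an invariant of the fan: the Fano condition pins down the twisting vectors $\va$ at each stage, so that the nested blow-up history recorded by the primitive collections of $\Sigma_D$ can be reorganized canonically into the branching hierarchy of a series-reduced tree, and any fan isomorphism transports this hierarchy to a tree isomorphism. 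The main obstacle is exactly this reconstruction: an abstract isomorphism need not respect the bundle filtration, so one must exhibit the tree as something intrinsic to $\Sigma_D$ (equivalently, to $H^*(X_D)$ together with its anticanonical class), and it is the rigidity forced by the Fano condition that makes this possible.

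\emph{Enumeration.} Granting the criterion, the count is immediate. Isomorphism classes of $n$-dimensional Schr\"{o}der-type varieties correspond bijectively to the unordered rooted trees underlying the Schr\"{o}der trees of dissections of $\mathsf{P}_{n+2}$. A Schr\"{o}der tree is a plane rooted tree with $n+1$ leaves in which every non-leaf vertex has at least two children; forgetting the planar order gives exactly a rooted tree with $n+1$ leaves and no vertex of out-degree one, that is, a series-reduced rooted tree. Every such tree admits a planar embedding, hence arises as $\tau(D)$ for some dissection $D$, so the assignment sending the isomorphism class of $X_D$ to $\tau(D)$ is a bijection onto series-reduced rooted trees with $n+1$ leaves, and the stated count follows.
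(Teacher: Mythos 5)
Your outline has the right architecture (equivariance plus reconstruction of the tree from the fan, then the count), but it leaves the central step unproven exactly where you flag it: you never exhibit $\tau(D)$ as an intrinsic invariant of $\Sigma_D$, you only assert that ``the rigidity forced by the Fano condition'' makes this possible. In the paper this ``main obstacle'' is not a rigidity argument but a short explicit computation: from the proof of Lemma~\ref{lem:comb_poly}, the primitive collections of $\Sigma_D$ are precisely the edge sets $E(\mathsf{P}^{(i)})$, and the proof of Theorem~\ref{thm:Fano} shows each primitive relation has the form \eqref{eq:primitive-dissect} — the rays in a collection sum either to $\boldsymbol{0}$ (the corresponding vertices are the children of the root) or to a single ray $\mathbf{u}_{i_1i_\ell}$ (they are the children of the vertex labeled $\{i_1,i_\ell\}$). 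These relations \emph{are} the parent function of $\tau(D)$, and primitive collections and relations are combinatorial invariants preserved by any fan isomorphism. Note also that this corrects your attribution of where Fano works: once your step 1 (making a given isomorphism torus-equivariant via conjugacy of maximal tori in the automorphism group, a Demazure-type fact that is indeed correct) is done, the varieties-to-trees direction needs no Fano input at all. Where Fano genuinely earns its keep in the paper is the \emph{converse}: Proposition~\ref{prop:batyrev-iso} (Batyrev) says that for smooth Fano toric varieties a ray bijection preserving primitive relations already yields an isomorphism of varieties, which disposes of the trees-to-varieties implication in one line; the paper's single application of that proposition replaces both of your directions.

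Your bundle-theoretic ``if'' direction is the genuinely different (and salvageable) part, but as written it is too glib. An unordered-tree isomorphism may swap children that are themselves internal vertices, i.e., transplant entire sub-towers together with their twisting data, not merely permute line-bundle summands within one stage; and the twists are visibly \emph{not} symmetric in the natural coordinates — for $D^I(m_1,m_2,m_3)$ the ideal $\langle x_1^{m_1}, x_2^{m_2}, x_3(-x_1+x_2+x_3)(x_2+x_3)^{m_3-2}\rangle$ is not invariant under $x_1\leftrightarrow x_2$, even though the swapped trees give isomorphic varieties. To repair this you must, at each vertex, both reorder the (mutually independent) sibling stages of the tower and renormalize the parent stage's fiber by tensoring with a suitable common line bundle, e.g.\ in the $D^I$ example tensoring $\underline{\C}\oplus L_{12}\oplus L_1^{\oplus(m_3-2)}$ by the line bundle with Chern class $x_2-x_1$ recovers $\underline{\C}\oplus L_{21}\oplus L_2^{\oplus(m_3-2)}$; an induction on the tree then closes the argument. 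That bookkeeping is exactly what the paper's use of primitive relations lets it skip. Your enumeration paragraph is fine and agrees with Corollary~\ref{cor:number}: the correspondence between dissections and Schr\"{o}der trees is bijective, every series-reduced rooted tree admits a planar embedding, so the count is the number of series-reduced rooted trees with $n+1$ leaves.
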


Cohomological rigidity problem for toric varieties asks whether two smooth compact toric varieties are diffeomorphic when their cohomology rings are isomorphic as graded rings, see~\cite{Masuda-Suh}. This problem is still open even for generalized Bott manifolds and many results have been produced in support of the affirmative answer to the problem, see~\cite{Choi, Choi-Masuda, CMS10_trans, HKMP}.

 It is proved in~\cite{LMP_directed_Dynkin} that two toric varieties of Catalan type are isomorphic as varieties if and only if their integral cohomology rings are isomorphic as graded rings. Hence we may ask the following.

\begin{problem}\label{prob1}
When two toric varieties $X_D$ and $X_{\widetilde{D}}$ of Schr\"{o}der type have the same cohomology ring, are they isomorphic?
\end{problem}

We explicitly describe the integral cohomology ring of $X_D$ from the associated Schr\"{o}der tree $\tau(D)$, see Theorem~\ref{thm:main} for more details. We give a partial affirmative answer to the above problem when we restrict the number of partitions for polygon dissections.

\begin{thm}\label{thm1}
    Let $D$ and $\widetilde{D}$ be $k$-dissections of the polygon $\mathsf{P}_{n+2}$. When $k\leq 3$ or $k=n$, toric varieties $X_D$ and $X_{\widetilde{D}}$ are isomorphic as varieties if and only if $H^\ast(X_D)$ and $H^\ast(X_{\widetilde{D}})$ are isomorphic as graded rings.
\end{thm}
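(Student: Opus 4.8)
The plan is to prove the nontrivial implication, namely that a graded ring isomorphism $H^\ast(X_D)\cong H^\ast(X_{\widetilde D})$ forces $X_D\cong X_{\widetilde D}$ as varieties; the converse is immediate, since isomorphic varieties have isomorphic cohomology rings. By the reconstruction statement behind Theorem~\ref{thm:Fano} it suffices to recover the unordered rooted tree $\tau(D)$ from the abstract graded ring $H^\ast(X_D)$, so the whole argument becomes a reconstruction problem: I want to read off, from the ring alone, the internal vertices of $\tau(D)$, the number of children of each, and the parent--child adjacencies among them.

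First I would extract the coarse data. Since $X_D$ is a $k$-stage generalized Bott manifold whose $j$-th fiber is $\CP^{m_j}$ with $\sum_j m_j=n$, its Poincar\'e polynomial factors as $\prod_{j=1}^{k}\bigl(1+t^2+\cdots+t^{2m_j}\bigr)$. The factorization of the Poincar\'e polynomial into factors of this shape is unique (by M\"obius inversion on the multiset of cyclotomic factors of the quantum integers $1+t^2+\cdots+t^{2m_j}$), so the multiset $\{m_1,\dots,m_k\}$ of fiber dimensions, equivalently the multiset of polygon-piece sizes, is a graded ring invariant. This already settles $k=1$, where $\tau(D)$ is the unique star with $n+1$ leaves and $X_D=\CP^n$, and for $k=n$ it reduces the claim to the Catalan-type case, which is exactly the content of \cite{LMP_directed_Dynkin}; there I would simply invoke that result.

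The heart of the matter is that the multiset $\{m_1,\dots,m_k\}$ does not determine $\tau(D)$: the Poincar\'e polynomial is symmetric in the $m_j$ and hence blind to the nesting of the pieces. The missing information---which internal vertex is the parent of which---is encoded in the defining relations of Theorem~\ref{thm:main}: the root contributes a relation of the form $x_{\mathrm{root}}^{\,m_{\mathrm{root}}+1}=0$, while every non-root vertex $v_j$ contributes a relation coupling its degree-two generator $x_j$ to the generator of its parent. For $k=2$ and $k=3$ I would enumerate the finitely many shapes of unordered rooted trees with that many internal vertices (for $k=2$, a root with one internal child; for $k=3$, either a chain of three internal vertices or a root with two internal children), and for each shape exhibit a ring-theoretic invariant pinning down the tree. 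Concretely, I would locate the distinguished classes $x_j$ up to the residual change of basis allowed by the ring, characterize the root as the generator whose top power vanishes with no correction term, and then detect each parent--child adjacency by testing which products $x_ix_j$ are forced to be nonzero and which generators appear in the correction term of each stagewise relation.

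The main obstacle will be ruling out \emph{accidental} graded ring isomorphisms between non-isomorphic trees that happen to share the same multiset $\{m_1,\dots,m_k\}$; this is precisely where the symmetry of the Betti numbers must be broken purely ring-theoretically. Already for $k=2$ with $m_1\neq m_2$ one must show the ring distinguishes the base $\CP^{m_1}$ from the fiber $\CP^{m_2}$, that is, recovers the ordered pair from the unordered one. For $k=3$ the delicate point is separating the chain from the branching shape when the three fiber dimensions can be matched up, together with handling coincidences among the $m_j$ that enlarge the automorphism group of $H^2(X_D)$ and create extra candidate identifications. The technical core is therefore to analyze the automorphisms of $H^\ast(X_D)$ that preserve the relation ideal of Theorem~\ref{thm:main}, and to verify that any such automorphism permutes the distinguished generators in a way that preserves the parent--child adjacency---equivalently, that the adjacency graph of internal vertices is a graded ring invariant. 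Once both this adjacency and the per-vertex fiber dimensions are recovered, the unordered rooted tree $\tau(D)$ is determined, and Theorem~\ref{thm:Fano} completes the argument.
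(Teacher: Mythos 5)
Your high-level route is the same as the paper's: recover the unordered rooted tree $\tau(D)$ from the graded ring and conclude by Theorem~\ref{thm:variety-tree}, after first extracting $k$ and the multiset $\{m_1,\dots,m_k\}$ of fiber dimensions (your cyclotomic-factorization argument for the Poincar\'e polynomial is correct, and is an acceptable substitute for the paper's Corollary~\ref{cor:cohomology-size}, which gets the same data from Choi--Panov--Suh), and handling $k=n$ by citing the Catalan-type rigidity result. The genuine gap is that everything which makes the theorem nontrivial is deferred rather than proved. For $k=3$ one must show (i) that a ring of chain type $D^{I\!I}$ is never isomorphic to one of branching type $D^{I}$, and (ii) that within each type the assignment of the out-degrees $\{a,b,c\}$ (or $\{a,a,b\}$) to the three internal vertices is a ring invariant. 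The paper's entire Section~5 consists of exactly this: Lemmas~\ref{lem:type2-coho-1}--\ref{lem:type2-coho-3} and \ref{lem:type1-coho} determine which linear forms $px_1+qx_2+rx_3$ can have vanishing $m$-th powers in each ring (yielding the invariant $n(D)$), and Propositions~\ref{prop:type}, \ref{prop:type1}, \ref{prop:type2} carry out a long coefficient-comparison case analysis on a putative matrix $G\in\GL_3(\Z)$ carrying one relation ideal to the other, deriving determinant or integrality contradictions. Your proposal names this as ``the technical core'' but supplies none of it, and there is no soft reason the reconstruction must succeed --- the theorem is asserted only for $k\le 3$ and $k=n$ precisely because these ad hoc computations are what is available. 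Even your $k=2$ case (distinguishing, e.g., $\Z[x_1,x_2]/\langle x_1^2,x_2(x_1+x_2)^2\rangle$ from $\Z[x_1,x_2]/\langle x_1^3,x_2(x_1+x_2)\rangle$, which share the multiset $\{2,3\}$) still requires an argument, say via existence of a square-zero linear form.

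Moreover, two of the concrete tests you propose are wrong as stated. First, the generator with a pure-power relation is not the root: by Theorem~\ref{thm:main} and Corollary~\ref{cor:leaf}, a relation of the form $x^{\ell}=0$ arises from an internal vertex \emph{all of whose children are leaves}; in $\mathcal{I}_{D^{I\!I}}$ it is $x_1^{m_1}$, attached to the deepest internal vertex, while the root contributes the coupled relation $x_3(x_1+x_2+x_3)^{m_3-1}$. Second, parent--child adjacency cannot be detected by vanishing of products $x_ix_j$: the only monomial relations in $H^\ast(X_D)$ come from entire primitive collections $E(\mathsf{P}^{(i)})$, each of size $m_i\ge 2$ and containing at most one of the distinguished generators, so $x_ix_j\ne 0$ for all $i\ne j$ regardless of the tree shape. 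This is exactly why the paper resorts to the subtler invariants above (vanishing powers of general linear forms, plus the $\GL_3(\Z)$ analysis) instead of any product test; repairing your outline would in effect mean reproducing those lemmas and case analyses.
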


This paper is organized as follows. In Section~\ref{sec:preliminaries}, we introduce the Etherington's bijection between polygon dissections and Schr\"{o}der trees and review some basic definitions and properties of toric varieties. In Section~\ref{sec:construction}, we construct a toric variety of Schr\"{o}der type and show that toric varieties of Schr\"{o}der type are Fano generalized Bott manifolds. In Section~\ref{sec:cohomology}, we give an explicit description of the cohomology ring of a toric variety of Schr\"{o}der type in terms of the associated Schr\"{o}der tree. Section~\ref{sec:proof} is devoted to prove Theorem~\ref{thm1}. In Section~\ref{sec:further}, we give another evidence for the affirmative answer to Problem~\ref{prob1}.

\section{Preliminaries}\label{sec:preliminaries}
In this section, we introduce the Etherington's bijection between the set of polygon dissections and that of Schr\"{o}der trees in \cite{Ether} and review some basic facts on toric varieties.

\subsection{Polygon dissections and Schr\"{o}der trees}

We give the labels $0,1,\dots,n+1$ to the vertices of a regular polygon $\mathsf{P}_{n+2}$ with $n+2$ vertices counterclockwise.
We call the edge $\{0,n+1\}$ the \emph{distinguished edge} of $\mathsf{P}_{n+2}$. 
For simplicity, we also denote by $D$ the set of all polygons in the dissection $D$ of $\mathsf{P}_{n+2}$.
For each polygon $\mathsf{P}^{(i)}$ in $D$, we call the edge connecting the minimal and the maximal vertices the distinguished edge of $\mathsf{P}^{(i)}$ in $D$. Let $E(\mathsf{P}^{(i)})$ denote the set of edges of $\mathsf{P}^{(i)}$ except the distinguished edge of $\mathsf{P}^{(i)}$ in $D$.
Let $E(D)$ be the set of all the edges in $D$ except the distinguished edge of $\mathsf{P}_{n+2}$. Then $E(D)$ is decomposed into the edge sets $E(\mathsf{P}^{(i)})$, $i=1,\dots,k$.

\begin{ex}
Let $D$ be the $4$-dissection of $\mathsf{P}_{10}$ in Figure~\ref{fig:four-dissect}. Then $\{0,9\}$ is the distinguished edge of $\mathsf{P}_{10}$ and it is also the distinguished edge of $\mathsf{P}^{(1)}$ in $D$. The edges $\{0,7\}, \{0,3\}$, and $\{3,7\}$ are the distinguished edges of $\mathsf{P}^{(2)}$, $\mathsf{P}^{(3)}$, and $\mathsf{P}^{(4)}$, respectively. Note that in Figure~\ref{fig:four-dissect}, the distinguished edges are represented by dashed lines.
\end{ex}

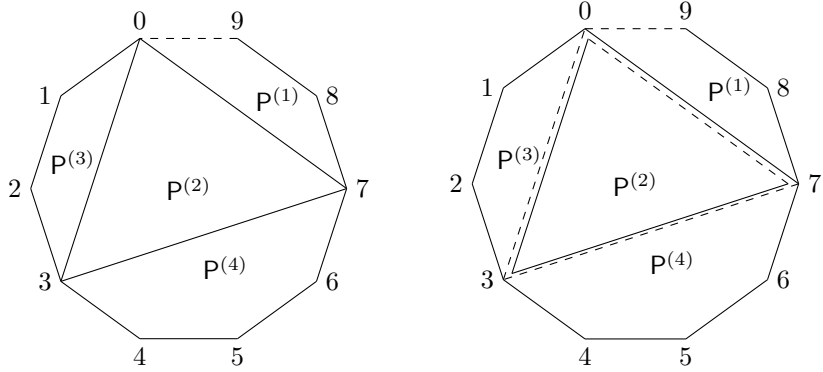
\begin{figure}[hbt]
\begin{center}
\begin{subfigure}[b]{0.4\textwidth}
\centering
	\begin{tikzpicture}[scale=.7]
\foreach \x in {2,...,11}{
	\coordinate (\x)  at (3*36-\x*36:3cm) ;	
} 

\node[above] at (11) {$9$};
\foreach \x/\y in {2/8,3/7,4/6}{
	\node[right] at (\x) {$\y$};
}
\foreach \x/\y in {5/5,6/4}{
	\node[below] at (\x) {$\y$};
}
\foreach \x/\y in {7/3,8/2,9/1}{
	\node[left] at (\x) {$\y$};
}
\node[above] at (10) {$0$};

\draw[dashed] (11) to (10);
\draw
(10) edge (9)
(9) edge (8)
(8) edge (7)
(7) edge (6)
(6) edge (5)
(5) edge (4)
(4) edge (3)
(3) edge (2)
(2) edge (11)
(10) edge (7)
(7) edge (3)
(3) edge (10);

\node at (0,0) {$\mathsf{P}^{(2)}$};
\node at (1.7,1.7) {$\mathsf{P}^{(1)}$};
\node at (-2.2,0.5) {$\mathsf{P}^{(3)}$};
\node at (0.7,-1.5) {$\mathsf{P}^{(4)}$};
\end{tikzpicture}
\end{subfigure}
\qquad
\begin{subfigure}[b]{0.4\textwidth}
\centering
	\begin{tikzpicture}[scale=.7]
\foreach \x in {2,...,11}{
	\coordinate (\x)  at (3*36-\x*36:2.9cm) ;	
}

\coordinate (101) at (3*36-10*36:3.1cm);
\coordinate (91) at (3*36-9*36:3.1cm);
\coordinate (81) at (3*36-8*36:3.1cm);
\coordinate (71) at (3*36-7*36:3.1cm);
\draw
(101) edge (91)
(91) edge (81)
(81) edge (71);
\draw[dashed] (71) to (101);

\draw
(10) edge (7)
(7) edge (3);
\draw[dashed] (3) to (10);

\coordinate (61) at (3*36-6*36:3.1cm);
\coordinate (51) at (3*36-5*36:3.1cm);
\coordinate (41) at (3*36-4*36:3.1cm);
\coordinate (31) at (3*36-3*36:3.1cm);
\draw
(71) edge (61)
(61) edge (51)
(51) edge (41)
(41) edge (31);
\draw[dashed] (31) to (71);

\coordinate (21) at (3*36-2*36:3.1cm);
\coordinate (111) at (3*36-11*36:3.1cm);
\draw
(31) edge (21)
(21) edge (111)
(101) edge (31);
\draw [dashed] (101) to (111);

\node[above] at (111) {$9$};
\node[above] at (101) {$0$};
\node[right] at (21) {$8$};
\node[right] at (31) {$7$};
\node[right] at (41) {$6$};
\node[below] at (51) {$5$};
\node[below] at (61) {$4$};
\node[left] at (71) {$3$};
\node[left] at (81) {$2$};
\node[left] at (91) {$1$};

\node at (0,0) {$\mathsf{P}^{(2)}$};
\node at (1.8,1.8) {$\mathsf{P}^{(1)}$};
\node at (-2.2,0.5) {$\mathsf{P}^{(3)}$};
\node at (0.7,-1.5) {$\mathsf{P}^{(4)}$};

\end{tikzpicture}
\end{subfigure}
\end{center}
\caption{An example of a $4$-dissection of $ \mathsf{P}_{10}$}\label{fig:four-dissect}
\end{figure}

In the following, we show that there is a one-to-one correspondence between the set of $k$-dissection of $\mathsf{P}_{n+2}$ and the set of Schr\"{o}der trees with $k$ internal vertices and $n+1$ leaves.

Let \( D \) be a \( k \)-dissection of  \( \mathsf{P}_{n+2} \). The rooted plane tree \( \tau(D) \) associated with \( D \) is constructed as follows. Let \( e \) denote the distinguished edge of \( \mathsf{P}_{n+2} \). Then \( e \) is contained in a unique polygon in \( D \), say \( \mathsf{Q} \). If \( | E(\mathsf{Q}) |=m \), then by removing \( e \) and the interior of \( \mathsf{Q} \) from \( D \), we have \( m \) dissections \( D_1, D_2, \dots, D_m \), reading counterclockwise from \( e \) along the boundary of \( \mathsf{Q} \), such that \( D_{i} \) and \( D_{i+1} \) intersect at a vertex for \( i=1,2,\dots, m-1 \). Now define recursively \( \tau(D) \) to be the rooted plane tree whose sub-trees of the root are \( \tau(D_1), \tau(D_2), \dots, \tau(D_k) \) in that order. Then the resultant \( \tau(D) \) forms a Schr\"{o}der tree with \( k \) internal vertices and \( n+1 \) leaves as we desired. Note that if $D_i$ is an edge for some $i=1,\dots,k$, then $\tau(D_i)$ is a single vertex graph.

In order to consider the inverse from Schr\"{o}der trees to polygon dissections, we define recursively a labeling of Schr\"{o}der trees. Given a Schr\"{o}der tree \(T\) with \(n+1\) leaves, let
\[
\phi : V(T) \rightarrow \{\{i,j\}\mid 0\leq i <j \leq n+1\},
\] 
where 
\[
\phi(v)=\begin{cases}
\{0,n+1\} & \text{if \(v\) is the root},\\
\{i-1,i\} & \text{if \(v\) is the \(i\)th leaf in the preorder\footnote{It visits from root-left-right.} listing of \(T\)}, and\\
\{i,j\} & \text{if \(v\) is an internal vertex 
whose left-most and right-most}\\
& \text{children are 
labeled by \(\{i,\bullet\}\) and \(\{\bullet,j\}\), respectively.}
\end{cases}
\]

Note that for a dissection \(D\), there is a polygon \(\mathsf{P}^{(i)}\) in \(D\) with vertices labeled by \(v_0< v_1< \cdots < v_{\ell} \) counterclockwise if and only if there is an internal vertex \(v\in V(\tau(D))\), which is labeled by \(\{v_0,v_{\ell}\}\) and has \(\ell\) children labeled by \(\{v_{j-1},v_{j}\}\) for \(j=1,2,\dots, \ell\) from left to right.
Hence, given a Schr\"{o}der tree \(T\), we get the corresponding dissection \(\tau^{-1}(T)\). 
Sometimes, we use the simple notation \(\phi(v)=ij\) instead of \(\phi(v)=\{i,j\}\).
Figure \ref{fig:rooted tree} shows a simple way of finding the correspondence between \(D\) and \( \tau(D) \). Note that in Figure \ref{fig:rooted tree}, the root is represented by \( \odot \). 

\begin{figure}[hbt]
\begin{center}
\begin{subfigure}[b]{0.4\textwidth}
\centering
	\begin{tikzpicture}[scale=.8]
\foreach \x in {2,...,11}{
	\coordinate (\x)  at (3*36-\x*36:3cm) ;	
} 

\node[above] at (11) {$9$};
\foreach \x/\y in {2/8,3/7,4/6}{
	\node[right] at (\x) {$\y$};
}
\foreach \x/\y in {5/5,6/4}{
	\node[below] at (\x) {$\y$};
}
\foreach \x/\y in {7/3,8/2,9/1}{
	\node[left] at (\x) {$\y$};
}
\node[above] at (10) {$0$};

\draw[dashed] (11) to (10);
\draw
(10) edge (9)
(9) edge (8)
(8) edge (7)
(7) edge (6)
(6) edge (5)
(5) edge (4)
(4) edge (3)
(3) edge (2)
(2) edge (11)
(10) edge (7)
(7) edge (3)
(3) edge (10);

\foreach \x in {1,2,3,...,9}{
	\coordinate (\x)  at (-18+3*36-\x*36:3.3cm);
}
\foreach \x in {7,8,9}{
	\draw (-2.2,0.5) edge (\x);
}
\foreach \x in {3,4,5,6}{
	\draw (0.7,-1.9) edge (\x);
}

\draw 
(1.35,1.79) edge (1) 
(1.35,1.79) edge (2) 
(1.35,1.79) edge (0,0)
(0,0) edge (-2.2,0.5)
(0,0) edge  (0.7,-1.9);

\foreach \x in {1,2,3,...,9}{
	\node at (\x) {$\bullet$};
} 

\node at (0,0) {$\bullet$};
\node at (1.35,1.79) {$\odot$};
\node at (-2.2,0.5) {$\bullet$};
\node at (0.7,-1.9) {$\bullet$};

\end{tikzpicture}
\end{subfigure}
\qquad \quad
\begin{subfigure}[b]{0.4\textwidth}
\centering
\begin{tikzpicture}[scale=.8]
\node at (-4,-1.5) {$\leftrightarrow$};
\node at (-4,-1) {$\tau$};

\coordinate (0) at (0,0);
\coordinate (1) at (-1,-1.5);
\coordinate (2) at (0,-1.5);
\coordinate (3) at (1,-1.5);
\coordinate (4) at (-3,-3);
\coordinate (5) at (1,-3);
\coordinate (6) at (-4,-4.5);
\coordinate (7) at (-3,-4.5);
\coordinate (8) at (-2,-4.5);
\coordinate (9) at (-0.5,-4.5);
\coordinate (10) at (0.5,-4.5);
\coordinate (11) at (1.5,-4.5);
\coordinate (12) at (2.5,-4.5);

\node[above] at (0,0.1) {$09$};
\node[below] at (1) {$07$};
\node[below] at (2) {$78$};
\node[below] at (3) {$89$};
\node[above] at (4) {$03$};
\node[above] at (5) {$37$};
\node[below] at (6) {$01$};
\node[below] at (7) {$12$};
\node[below] at (8) {$23$};
\node[below] at (9) {$34$};
\node[below] at (10) {$45$};
\node[below] at (11) {$56$};
\node[below] at (12) {$67$};

\node at (0) {$\odot$};
\node at (0,-5.3) {};
\foreach \x in {1,2,...,12}{
	\node at (\x) {$\bullet$};
}

\foreach \x in {1,2,3}{
	\draw (0) edge (\x);
}

\foreach \x in {4,5}{
	\draw (1) edge (\x);
}

\foreach \x in {6,7,8}{
	\draw (4) edge (\x);
}
\foreach \x in {9,10,11,12}{
	\draw (5) edge (\x);
}
\node at (2.8,0) {};
\end{tikzpicture}
\end{subfigure}
\end{center}
\caption{A dissection of $\mathsf{P}_{10}$ and its corresponding Schr\"{o}der tree}\label{fig:rooted tree}
\end{figure}
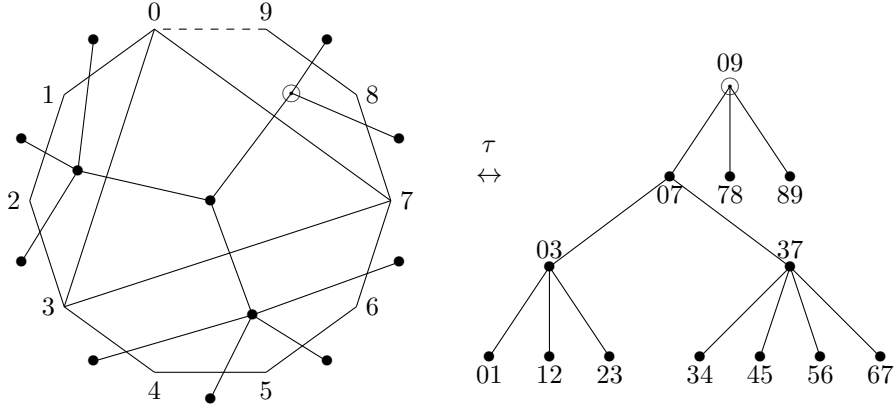

\subsection{Toric varieties}
 A \emph{toric variety} of complex dimension~$n$ is a normal complex algebraic variety with an algebraic action of $(\C^\ast)^n$ having an open dense orbit. A fundamental result of toric geometry is that there is a bijective correspondence between toric varieties of complex dimension~$n$ and rational fans of real dimension~$n$. We can determine geometric or topological properties of toric varieties combinatorially.

Let $\Sigma$ be a fan whose ray generators are $\mathbf{v}_1,\dots,\mathbf{v}_m$ in $\R^n$. The fan $\Sigma$ is \emph{complete} if $\bigcup_{\sigma\in \Sigma}\sigma =\R^n$; $\Sigma$ is \emph{nonsingular} if the $\mathbf{v}_i$'s for the generators of $\sigma$ form a part of a basis of $\Z^n$ for every $\sigma\in\Sigma$; and $\Sigma$ is \emph{polytopal} if it is the normal fan of a lattice polytope. Then  a toric variety $X$ is compact (respectively, smooth and projective) if and only if the associated fan $\Sigma_X$ is complete (respectively, nonsingular and polytopal).

We can also describe the cohomology ring of a smooth compact toric variety explicitly using the associated fan.

\begin{thm}[{\cite{Danilov,Jurkiewicz}}]\label{thm:Da-Ju}
Let $\Sigma$ be a complete nonsingular fan whose ray generators are $\mathbf{v}_1,\dots,\mathbf{v}_m$.
Let $X_\Sigma$ be the toric variety associated with the fan $\Sigma$.
Then the (integral) cohomology ring $H^\ast(X_\Sigma)$ is isomorphic to $\Z[x_1,\dots,x_m]/\mathcal{I}$ as a graded ring, where $\mathcal{I}$ is the ideal generated by the following two types of elements:
\begin{enumerate}
    \item $\prod_{i\in I}x_i$, where $\{\mathbf{v}_i\mid i\in I\}$ does not form a cone in $\Sigma$, and
    \item $\sum_{i=1}^m \langle \mathbf{u},\mathbf{v}_i\rangle x_i$ for any $\mathbf{u}\in \Z^n$,
\end{enumerate}
where $\langle\:,\:\rangle$ denotes the standard scalar product on $\Z^n$.
\end{thm}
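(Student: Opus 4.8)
The plan is to realize the claimed presentation as arising from the geometry of torus-invariant divisors and then to verify it by a degreewise rank count against the (purely even) cohomology. Write $D_i\subset X_\Sigma$ for the closure of the codimension-one torus orbit attached to the ray $\R_{\ge 0}\mathbf{v}_i$, and set $x_i:=[D_i]\in H^2(X_\Sigma)$, its Poincar\'e dual class. Define the graded ring homomorphism $\Phi\colon \Z[x_1,\dots,x_m]\to H^\ast(X_\Sigma)$ by $x_i\mapsto [D_i]$. The first goal is to check that $\Phi$ annihilates the ideal $\mathcal{I}$, and the second is to show that the induced map has kernel exactly $\mathcal{I}$ and is surjective.

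First I would pin down the additive structure of $H^\ast(X_\Sigma)$. A generic one-parameter subgroup of $(\C^\ast)^n$ produces a Bialynicki--Birula decomposition of $X_\Sigma$ into affine cells indexed by the maximal cones of $\Sigma$ (equivalently, one uses the standard stratification by torus orbits). Since every cell is even-dimensional, $H^\ast(X_\Sigma)$ is a free abelian group concentrated in even degrees, with the orbit-closure classes $[V(\sigma)]$ forming a $\Z$-basis. In particular $\operatorname{rank} H^{2j}(X_\Sigma)=h_j$, the $j$th entry of the $h$-vector of $\Sigma$, which I would record for the final comparison.

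Next I would verify the two families of relations. For type (1), if $\{\mathbf{v}_i\mid i\in I\}$ spans no cone of $\Sigma$, then the divisors $\{D_i\mid i\in I\}$ have empty common intersection, so $\prod_{i\in I}x_i=0$. For type (2), each $\mathbf{u}\in\Z^n$ gives a character $\chi^{\mathbf{u}}$, a rational function on $X_\Sigma$ whose principal divisor is $\sum_i\langle\mathbf{u},\mathbf{v}_i\rangle D_i$; being principal, it is cohomologically trivial, so $\sum_i\langle\mathbf{u},\mathbf{v}_i\rangle x_i=0$. Thus $\Phi$ descends to $\bar\Phi\colon \Z[x_1,\dots,x_m]/\mathcal{I}\to H^\ast(X_\Sigma)$. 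Surjectivity of $\bar\Phi$ then uses nonsingularity of $\Sigma$: for $\sigma=\Cone(\mathbf{v}_i\mid i\in J)$ the orbit closure $V(\sigma)$ is the transverse intersection $\bigcap_{i\in J}D_i$, hence $[V(\sigma)]=\prod_{i\in J}x_i$, and these classes already span $H^\ast(X_\Sigma)$ additively.

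The remaining and hardest step is injectivity of $\bar\Phi$, i.e.\ that $\mathcal{I}$ captures every relation. The route I favor is a degreewise rank comparison: choosing a basis $\mathbf{u}_1,\dots,\mathbf{u}_n$ of $\Z^n$, one shows that the $n$ linear forms $\ell_k=\sum_i\langle\mathbf{u}_k,\mathbf{v}_i\rangle x_i$ form a regular sequence on the Stanley--Reisner ring $\Z[x_1,\dots,x_m]/(\text{type (1)})$, so the quotient by $\mathcal{I}$ is Cohen--Macaulay with Hilbert series $\sum_j h_j t^{2j}$; matching this against $\operatorname{rank} H^{2j}(X_\Sigma)=h_j$ forces $\bar\Phi$ to be an isomorphism in every degree. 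A structurally cleaner alternative, which I would use to organize the argument, is to pass through equivariant cohomology: using that $X_\Sigma$ is equivariantly formal, identify $H^\ast_T(X_\Sigma)$ with the face ring $\Z[x_1,\dots,x_m]/(\text{type (1)})$ and recover $H^\ast(X_\Sigma)=H^\ast_T(X_\Sigma)\otimes_{H^\ast_T(\mathrm{pt})}\Z$ by killing precisely the linear forms of type (2). In either route the main obstacle is controlling the regular-sequence/freeness property over $\Z$ rather than over a field; handling this integrality is exactly what upgrades the conclusion to an isomorphism of integral graded rings.
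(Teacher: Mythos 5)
The paper offers no proof of this statement at all: it is the classical Danilov--Jurkiewicz theorem, quoted as background with citations to \cite{Danilov,Jurkiewicz} and used as a black box throughout. So your proposal can only be measured against the standard proofs in the literature, and it is in fact a faithful reconstruction of that standard argument (as in Danilov's paper or \cite{CLS11Toric}): freeness and evenness of $H^\ast(X_\Sigma)$ from a cell decomposition, verification that the Stanley--Reisner monomials die because the corresponding divisors have empty intersection, verification that the linear forms die because $\operatorname{div}(\chi^{\mathbf{u}})=\sum_i\langle\mathbf{u},\mathbf{v}_i\rangle D_i$ is principal, surjectivity via $[V(\sigma)]=\prod_{i\in J}x_i$ (transversality here uses nonsingularity, as you say), and injectivity by comparing Hilbert functions with the $h$-vector. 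One small caution on the first step: a complete nonsingular fan need not be polytopal, so $X_\Sigma$ may be non-projective and one cannot simply invoke a Morse-theoretic Bia\l{}ynicki--Birula picture; the correct substitute is exactly the orbit stratification ordered by a generic vector, which yields a filtration by closed subvarieties with affine-cell differences (Fulton's argument), and your parenthetical ``equivalently, one uses the standard stratification by torus orbits'' covers this.

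The one genuine incompleteness is the step you yourself flag: you assert, but do not prove, that $\ell_1,\dots,\ell_n$ is a regular sequence on $\Z[x_1,\dots,x_m]/(\text{type (1)})$ with $\Z$-free quotient (equivalently, in your second route, equivariant formality over $\Z$ together with the identification $H^\ast_T(X_\Sigma)\cong\Z[x_1,\dots,x_m]/(\text{type (1)})$). This is not a cosmetic omission, because your rank-count conclusion needs it: a degreewise surjection of graded abelian groups onto a free group of the same rank is an isomorphism only if the source is torsion-free in that degree, and torsion-freeness of $\Z[x_1,\dots,x_m]/\mathcal{I}$ is precisely the content of the integral regular-sequence claim. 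The claim is true, and the proof uses nonsingularity in an essential way: for each maximal cone $\sigma$ the generators $\{\mathbf{v}_i\mid i\in\sigma\}$ form a $\Z$-basis, so the composite $\Z[x_1,\dots,x_m]/(\text{type (1)})\to\Z[x_i\mid i\in\sigma]$ carries the $\ell_k$ to a unimodular change of linear coordinates; combining these restriction maps with the Cohen--Macaulayness of the face ring over $\Z$ (Reisner's criterion applies since links in a complete simplicial fan are homology spheres over $\Z$) gives the regular sequence and the freeness of the quotient with Hilbert function $h_j$ in degree $2j$. Had you included this argument (or an explicit induction over a suitable ordering of the maximal cones), the proof would be complete; as written it is a correct and well-organized outline of the standard proof with its hardest lemma left as a pointer.
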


A projective smooth variety $X$ is \emph{Fano} if the anticanonical divisor $-K_X$ is ample.
There is a combinatorial way to determine whether a smooth projective toric variety is Fano.
For a projective fan $\Sigma$, a subset $R$ of the primitive ray vectors is called a \emph{primitive collection} of $\Sigma$ if 
\[
\Cone(R) \notin \Sigma
\quad \text{ but }\quad \Cone(R \setminus \{\mathbf{u}\}) \in \Sigma \qquad \text{ for every }\mathbf{u} \in R.
\] Note that if $\Sigma_\mathsf{P}$ is the normal fan of a lattice polytope $P$, then primitive collections of $\Sigma_\mathsf{P}$ correspond to the minimal non-faces of ${P}$.
For a primitive collection $R = \{\mathbf{u}'_1, \dots,\mathbf{u}'_{\ell}\}$, we get $\mathbf{u}'_1 + \cdots+\mathbf{u}'_{\ell}=\boldsymbol{0}$ or there exists a unique cone $\sigma$ such that $\mathbf{u}'_1 + \cdots+\mathbf{u}'_{\ell}$ is in the interior of $\sigma$. That is,
\begin{equation}\label{eq:primitive}
	\mathbf{u}'_1 + \cdots+\mathbf{u}'_{\ell}=\begin{cases}
	\boldsymbol{0}, &\text{ or }\\
	a_1 \mathbf{u}_1 + \cdots+ a_{r} \mathbf{u}_{r},&{}
	\end{cases}
\end{equation}
where $\mathbf{u}_1,\dots,\mathbf{u}_{r}$ are the primitive generators of $\sigma$ and $a_1,\dots,a_{r}$ are positive integers. We call \eqref{eq:primitive} a primitive relation, and the \emph{degree} $\deg R$ of a primitive collection $R$ is defined to be $\ell - (a_1+\cdots+a_r)$.
Batyrev~\cite{Batyrev} gave a criterion for a smooth projective toric variety to be Fano.
\begin{prop}[{\cite[Proposition~2.3.6]{Batyrev}}]\label{prop:batyrev}
A projective toric  variety $X_\Sigma$ is Fano when $\mathrm{deg}(R)>0$ for every primitive collection $R$ of $\Sigma$. 
\end{prop}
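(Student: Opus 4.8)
The plan is to reduce the Fano property to the ampleness of the anticanonical divisor $-K_{X_\Sigma}=\sum_{i=1}^{m}D_i$, where $D_i$ is the torus-invariant prime divisor dual to the ray $\mathbf{v}_i$, and then to test ampleness against torus-invariant curves via the toric Kleiman criterion. For a complete nonsingular fan the one-dimensional orbit closures $V(\tau)$ are indexed by the walls $\tau$ (the cones of dimension $n-1$), and a $\Q$-Cartier divisor $D$ is ample if and only if $D\cdot C>0$ for every $C$ in $\overline{NE}(X_\Sigma)\setminus\{\mathbf{0}\}$, where the Mori cone $\overline{NE}(X_\Sigma)\subset N_1(X_\Sigma)_\R$ is the polyhedral cone generated by the classes $[V(\tau)]$. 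Since $N_1(X_\Sigma)$ is the lattice of linear relations $\{(b_i)\mid \sum_i b_i\mathbf{v}_i=\mathbf{0}\}$ and the pairing with $D=\sum_i c_i D_i$ is $(b_i)\mapsto\sum_i c_i b_i$, this becomes a purely combinatorial positivity test.

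Next I would attach to each primitive collection $R=\{\mathbf{u}'_1,\dots,\mathbf{u}'_\ell\}$ the curve class $C_R\in N_1(X_\Sigma)$ determined by its primitive relation~\eqref{eq:primitive}: writing $\mathbf{u}'_1+\cdots+\mathbf{u}'_\ell-(a_1\mathbf{u}_1+\cdots+a_r\mathbf{u}_r)=\mathbf{0}$ as a relation among the rays and pairing with $-K_{X_\Sigma}$, all of whose divisor coefficients equal $1$, yields the clean identity
\[
-K_{X_\Sigma}\cdot C_R=\ell-(a_1+\cdots+a_r)=\deg R.
\]
Thus the hypothesis $\deg R>0$ says exactly that $-K_{X_\Sigma}$ is strictly positive on each class $C_R$.

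The decisive step is the structural fact, due to Batyrev and in the spirit of Reid's toric Mori theory, that $\overline{NE}(X_\Sigma)$ is generated as a cone by the classes $C_R$; equivalently, each $C_R$ is effective and every wall class $[V(\tau)]$ is a nonnegative combination of the $C_R$. Granting this, positivity of $-K_{X_\Sigma}$ on every generator $C_R$ propagates to positivity on all of $\overline{NE}(X_\Sigma)\setminus\{\mathbf{0}\}$, so $-K_{X_\Sigma}$ is ample and $X_\Sigma$ is Fano. In the dual language of support functions, one shows that the piecewise-linear function $\varphi_{-K}$ determined by $\varphi_{-K}(\mathbf{v}_i)=-1$ on every ray is strictly convex precisely when its kink across each primitive relation has the right sign, and $\deg R$ measures exactly this kink.

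The main obstacle is establishing the generation statement, i.e.\ that positivity need only be checked on primitive relations rather than on all walls. The difficulty is that a wall relation $\mathbf{v}+\mathbf{v}'+\sum_{\mathbf{v}_i\in\tau}b_i\mathbf{v}_i=\mathbf{0}$, coming from two maximal cones $\sigma=\tau+\R_{\ge0}\mathbf{v}$ and $\sigma'=\tau+\R_{\ge0}\mathbf{v}'$ meeting along $\tau$, may have coefficients $b_i$ of mixed sign, so it is not itself a primitive relation. One must show that $\{\mathbf{v},\mathbf{v}'\}$, or a suitable subset incorporating the rays entering with negative coefficient, is a primitive collection whose relation \emph{dominates} the wall relation, and then resolve $[V(\tau)]$ into a nonnegative combination of primitive classes by induction on the number of rays entering with the wrong sign. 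Carrying out this reduction carefully, that is, verifying effectiveness of the $C_R$ and that they span the Mori cone, is the technical heart; once it is in place the Fano conclusion is immediate from the identity $-K_{X_\Sigma}\cdot C_R=\deg R$ and Kleiman's criterion.
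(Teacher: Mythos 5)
The paper offers no proof of this proposition at all: it is quoted verbatim from Batyrev \cite[Proposition~2.3.6]{Batyrev} and used as a black box, so the comparison here is with the argument in the cited literature rather than with anything in the text. Your outline correctly reconstructs that argument. The identity $-K_{X_\Sigma}\cdot C_R=\deg R$ is right: since $-K_{X_\Sigma}=\sum_i D_i$ has all divisor coefficients equal to $1$, pairing with the relation $\mathbf{u}'_1+\cdots+\mathbf{u}'_\ell-(a_1\mathbf{u}_1+\cdots+a_r\mathbf{u}_r)=\mathbf{0}$ gives $\ell-(a_1+\cdots+a_r)$ (and $\ell>0$ automatically in the degenerate case where the sum is $\mathbf{0}$). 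The toric Kleiman reduction to wall classes is also standard, since the Mori cone of a complete toric variety is rational polyhedral and generated by the classes $[V(\tau)]$. The one load-bearing step you do not carry out --- that every wall class is a nonnegative combination of primitive-relation classes, so that positivity on the $C_R$ propagates to the whole Mori cone --- is exactly Batyrev's theorem that the Mori cone of a smooth projective toric variety is generated by primitive relations (Theorem~2.15 of his 1991 classification paper; see also Theorem~6.4.11 of \cite{CLS11Toric}). You flag this honestly as the technical heart, and invoking it is no weaker than what the paper itself does in citing the proposition wholesale; note also that for the sufficiency direction stated here you do not actually need effectiveness of each $C_R$, only the spanning direction, and that Batyrev's full result is an equivalence while the paper states only the implication you prove. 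So the proposal is correct as an argument modulo a theorem it correctly identifies and attributes, and it follows the same route as the source the paper cites.
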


We can also classify smooth Fano toric varieties up to isomorphism using the primitive relations.
\begin{prop}[{\cite[Proposition 2.1.8 and Theorem 2.2.4]{Batyrev}}]\label{prop:batyrev-iso}
Two smooth Fano toric varieties $X_\Sigma$ and $X_{\Sigma'}$ are isomorphic as toric varieties if and only if there is a bijection between the sets of rays of $\Sigma$ and $\Sigma'$ inducing a bijection between maximal cones and preserving the primitive relations.
\end{prop}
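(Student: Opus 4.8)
The plan is to translate the statement into the language of lattices and reduce the equivalence to a single linear-algebraic reconstruction. Write $N\cong\Z^n$ and $N'\cong\Z^n$ for the cocharacter lattices carrying the fans $\Sigma$ and $\Sigma'$, and recall the basic dictionary of toric geometry: an isomorphism of toric varieties $X_\Sigma\cong X_{\Sigma'}$ respecting the torus actions is the same thing as a lattice isomorphism $\phi\colon N\to N'$ with $\phi(\Sigma)=\Sigma'$. Thus both directions amount to comparing such a $\phi$ with the purely combinatorial datum of a ray bijection that preserves maximal cones and primitive relations.

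For the forward implication, suppose $X_\Sigma\cong X_{\Sigma'}$ and let $\phi\colon N\to N'$ be the corresponding lattice isomorphism with $\phi(\Sigma)=\Sigma'$. Since $\phi$ is a bijection on cones, it sends each ray of $\Sigma$ to a ray of $\Sigma'$ and each maximal cone to a maximal cone, giving the required bijection on rays that preserves the face structure. Because $\phi$ is linear and carries primitive ray generators to primitive ray generators, it also sends each primitive collection $R$ to a primitive collection and transports the relation $\mathbf{v}'_1+\cdots+\mathbf{v}'_\ell=a_1\mathbf{u}_1+\cdots+a_r\mathbf{u}_r$ of \eqref{eq:primitive} to the corresponding relation in $\Sigma'$; hence the primitive relations are preserved. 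This direction uses nothing beyond linearity.

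The substance is the reverse implication. Given a bijection $\psi$ on rays inducing a bijection of maximal cones and preserving primitive relations, label the primitive generators so that $\psi(\mathbf{v}_i)=\mathbf{v}'_i$ and define a $\Q$-linear map $\phi\colon N_\Q\to N'_\Q$ by $\phi(\mathbf{v}_i)=\mathbf{v}'_i$. The first task is well-definedness: one must verify that every $\Q$-linear relation $\sum_i c_i\mathbf{v}_i=\0$ also holds among the $\mathbf{v}'_i$. Here I would invoke the structural content of Batyrev's Theorem~2.2.4, namely that for a smooth complete (in particular projective) fan the space of linear relations among the primitive ray generators is spanned by the primitive relations. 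Granting this, preservation of the primitive relations by $\psi$ forces the two relation spaces to correspond, so $\phi$ is a well-defined linear isomorphism. The second task is integrality: choose any maximal cone $\sigma$ of $\Sigma$; by nonsingularity its ray generators form a $\Z$-basis of $N$, and $\psi$ maps them to the generators of a maximal cone $\sigma'$ of $\Sigma'$, which again form a $\Z$-basis of $N'$ by nonsingularity of $\Sigma'$. Thus $\phi$ carries a lattice basis to a lattice basis and is therefore a lattice isomorphism $N\to N'$. Finally, since $\psi$ matches maximal cones and every cone is the nonnegative hull of its rays, $\phi$ sends each cone of $\Sigma$ onto a cone of $\Sigma'$; hence $\phi(\Sigma)=\Sigma'$ and $X_\Sigma\cong X_{\Sigma'}$.

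The main obstacle is exactly the well-definedness step, i.e. proving that the primitive relations span all linear relations among the ray generators. This is the genuinely nontrivial part of Batyrev's theory and is where smoothness and projective completeness are essential: one argues by an induction organized along the primitive collections, repeatedly using that removing one ray of a primitive collection yields a cone of $\Sigma$, together with a wall-crossing argument, to rewrite an arbitrary relation in terms of primitive ones. Once this spanning statement is in hand, the construction of $\phi$ and the verification that it is a fan isomorphism are routine, and the equivalence follows.
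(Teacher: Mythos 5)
The paper contains no proof of this proposition: it is quoted directly from Batyrev (Propositions~2.1.8 and Theorem~2.2.4 of that paper), so the only comparison available is with Batyrev's own argument, which your proposal reconstructs along the standard lines. Your reduction is sound where it is explicit: the forward direction is pure linearity; in the reverse direction, defining $\phi$ on the primitive ray generators, deducing integrality from the fact that the generators of corresponding maximal cones are $\Z$-bases (smoothness, used on both sides), and concluding $\phi(\Sigma)=\Sigma'$ because every cone of a smooth fan is spanned by a subset of the rays of a maximal cone, are all correct and routine. You also correctly isolate the crux: well-definedness of $\phi$ requires that the primitive relations span the $\Q$-vector space of all linear relations among the $\mathbf{v}_i$. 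This is exactly Batyrev's structural theorem that for a smooth \emph{projective} toric variety the Mori cone is generated by the classes of the primitive relations (the relation space is $N_1(X)_\Q$, it is spanned by classes of torus-invariant curves, and the extremal classes are primitive relations). Two cautions are in order. First, your parenthetical claim that the spanning statement holds for any smooth \emph{complete} fan is not justified: the Mori-cone argument uses projectivity, which here is supplied by the Fano hypothesis, and the merely complete case requires separate justification, so you should state the lemma only in the projective setting you actually use. Second, your closing sketch of that lemma (``induction along primitive collections plus wall-crossing'') is too thin to constitute a proof; one must pass through toric Mori theory (Reid's description of extremal rays by walls and Batyrev's identification of extremal classes with primitive relations). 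In effect your argument, like the paper itself, ultimately defers the hard content to Batyrev --- which is acceptable for this quoted proposition, but the deferral should be acknowledged as such rather than presented as an elementary induction.
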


\section{Smooth toric varieties associated with polygon dissections}\label{sec:construction}

In this section, we construct a toric variety of Schr\"{o}der type and show that they are Fano generalized Bott manifolds. We first construct a nonsingular polytopal fan $\Sigma_D$ associated with a dissection $D$ and then show that the toric variety associated with ${\Sigma_D}$ is a Fano generalized Bott manifold.

We first recall the notion of generalized Bott manifolds.

\begin{defn}[\cite{CMS10_trans}]
	A \emph{generalized Bott tower} $\B_\bullet$  is   an iterated $\C P^{n_{i}}$-bundle:
	\begin{equation}\label{eq_GB}
	\begin{tikzcd}[row sep = 0.2cm]
	\mathcal{B}_{m} \arrow[r, "\pi_{m}"] \arrow[d, equal]&  
	\mathcal{B}_{m-1} \arrow[r, "\pi_{m-1}"] &
	\cdots \arrow[r, "\pi_2"] &
	\mathcal{B}_1 \arrow[r, "\pi_1"] \arrow[d, equal]&
	\mathcal{B}_0, \arrow[d, equal]\\
	P(\underline{\C}\oplus \xi_{m-1}^{1}\oplus \cdots \oplus \xi_{m-1}^{n_{m}}) & & & \C P^{n_{1}} & \{\text{a point}\}
	\end{tikzcd} 
	\end{equation}
	where each $\mathcal{B}_{i}$ is the complex projectivization of the Whitney sum of holomorphic line bundles~$\xi_{i-1}^{k}$ $(1\leq k\leq n_{i})$ and the trivial line 
	bundle $\underline{\C}$ over $\mathcal{B}_{i-1}$.
	We call $\B_m$ an $m$-stage {\em generalized Bott manifold}. 
\end{defn}

When $n_i=1$ for all $i=1,\dots,m$, we omit `generalized.' That is, the tower in~\eqref{eq_GB} is called a \emph{Bott tower} and the total space $\mathcal{B}_m$ is called a \emph{Bott manifold}.

Note that a generalized Bott manifold in~\eqref{eq_GB} is a projective smooth toric variety, and the associated lattice polytope is combinatorially equivalent to $\prod_{i=1}^m \Delta^{n_i}$.
Conversely, if a smooth lattice polytope $P$ is combinatorially equivalent to a product of simplices, then the toric variety associated with the polytope $P$ is a generalized Bott manifold. See~\cite{CMS10_trans, CMS_osaka} for more details.

Recall that we give the labels $0,1,\dots,n+1$ to the vertices of $\mathsf{P}_{n+2}$ counterclockwise. Let us construct the fan $\Sigma_0$ as follows. 
\begin{enumerate}
    \item To each edge $\{i,j\}$ ($i<j$) in $E(\mathsf{P}_{n+2})$, we associate the ray generated by the vector $\mathbf{e}_j-\mathbf{e}_i$, where $\mathbf{e}_0=\mathbf{e}_{n+1}=\boldsymbol{0}$ and $\mathbf{e}_1,\dots,\mathbf{e}_n$ are the standard basis vectors of $\mathbb{R}^n$.
    \item Every proper subset of $E(\mathsf{P}_{n+2})$ constructs the cone generated by the corresponding rays.
\end{enumerate}
Note that the empty set corresponds to the zero-dimensional cone and the toric variety associated with the fan $\Sigma_0$ is the complex projective space $\mathbb{C}P^n$.

Now we associate a dissection of $\mathsf{P}_{n+2}$ to an iterated blowing up of the fan~$\Sigma_0$. We give an order on the edges in the interior of $D$ according to the preorder listing of the associated Schr\"oder tree $\tau(D)$. 
Suppose that the edges in the interior of a $k$-dissection $D$ of  $\mathsf{P}_{n+2}$  are ordered as follows:
\[
\{i_1,j_1\},\{i_2,j_2\},\dots,\{i_{k-1},j_{k-1}\}
\]
according to the preorder listing of $\tau(D)$.
For simplicity, we assume that $i_\ell<j_\ell$ for all $\ell=1,\dots,k-1$.
Then we define the fan $\Sigma_D$ associated with $D$ as follows. We first subdivide the cone of $\Sigma_0$ corresponding to the set $\{\{i_1,i_{1}+1\},\dots,\{j_{1}-1,j_{1}\}\}$ by adding the ray generated by $\mathbf{e}_{j_1}-\mathbf{e}_{i_1}$. Let $\Sigma_1$ be the resulting fan. Now we subdivide the cone of $\Sigma_1$ corresponding to the set generated by $\{\{i_2,i_{2}+1\},\dots,\{j_{2}-1,j_{2}\}\}$ by adding the ray generated by $\mathbf{e}_{j_2}-\mathbf{e}_{i_2}$. Continuing this process until the last edge $\{i_{k-1},j_{k-1}\}$, we get a fan $\Sigma_D$.
Note that the vector $\mathbf{e}_{j_\ell}-\mathbf{e}_{i_\ell}$ is the sum of the primitive ray generators corresponding to $\{i_\ell,i_{\ell}+1\},\dots,\{j_{\ell}-1,j_{\ell}\}$, and $\Sigma_D$ is also smooth.

\begin{defn}
    For a dissection $D$, we say that the toric variety $X_D$ associated with the fan $\Sigma_D$ is \emph{of Schr\"{o}der type}.
\end{defn}

Note that $\Sigma_D$ is also polytopal since $\Sigma_0$ is polytopal. Hence there is a lattice polytope $P_D$ whose normal fan is $\Sigma_D$.
We can get the polytope $P_D$ by truncating faces of the simplex $\Delta^n$. We denote the facets of $\Delta^n$ by $F_{i,i+1}$ for $i=0,1,\dots,n$. Then the polytope $P_D$ is obtained from $\Delta^n$ by truncating the face $F_{i_\ell,i_{\ell+1}}\cap \dots \cap F_{j_{\ell-1},j_{\ell}}$ repeatedly, where $\ell$ starts from $1$ and ends at $k-1$. We denote by $F_{i,\ell}$ the new facet obtained from the truncation of the face $F_{i_\ell,i_{\ell+1}}\cap \dots \cap F_{j_{\ell-1},j_{\ell}}$. See Figure~\ref{fig:poly-dissect}.

\begin{figure}[hbt]
    \centering
    \begin{subfigure}[b]{0.45\textwidth}
    \centering
    \begin{tikzpicture}[scale=.5]
        \foreach \x in {0,...,4}{
	        \coordinate (\x)  at (54+\x*72:2.5cm) ;	
        }
        \node[above] at (0) {$4$};
        \node[right] at (4) {$3$};
        \node[below] at (3) {$2$};
        \node[left] at (2) {$1$};
        \node[above] at (1) {$0$};
        \draw (1)--(2)--(3)--(4)--(0);
        \draw[dashed] (1)--(0);
        \draw (1)--(4);
        
        \coordinate (a) at (4,-0.5);
        \coordinate (b) at (6.5,-1.5);
        \coordinate (c) at (8,-0.5);
        \coordinate (d) at (6,2.5);
        \coordinate (ab) at (4+0.7*2.5,-0.5-0.7*1);
        \coordinate (bc) at (8-0.7*1.5,-0.5-0.7*1);
        \coordinate (bd) at (6+0.7*0.5,2.5-0.7*4);
        \draw[dashed, color=gray!70] (a)--(c);
        \draw (a)--(ab)--(bc)--(c)--(d)--cycle;
        \draw (d)--(bd);
        \filldraw[fill=gray!70] (ab)--(bc)--(bd)--cycle;
        
        \node at (5.5,0.5) {\tiny $F_{01}$};
        \node at (6.9,0.5) {\tiny $F_{12}$};
        \node at (4,-1.5) {\tiny $F_{23}$};
        \node at (8.4,1.5) {\tiny $F_{34}$};
        
        \draw[->] (4.4,-1.5) to [bend right=45] (5.5,-0.9);
        \draw[->] (8,1.5) to [bend right=45] (7,0.7);
    \end{tikzpicture}
    \subcaption{The gray triangle is $F_{03}$.}
    \end{subfigure}
    \quad
    \begin{subfigure}[b]{0.45\textwidth}
    \centering
    \begin{tikzpicture}[scale=.5]
        \foreach \x in {0,...,4}{
	        \coordinate (\x)  at (54+\x*72:2.5cm) ;	
        }
        \node[above] at (0) {$4$};
        \node[right] at (4) {$3$};
        \node[below] at (3) {$2$};
        \node[left] at (2) {$1$};
        \node[above] at (1) {$0$};
        \draw (1)--(2)--(3)--(4)--(0);
        \draw[dashed] (1)--(0);
        \draw (1)--(4);
        \draw (2)--(4);

        \coordinate (a) at (4,-0.5);
        \coordinate (b) at (6.5,-1.5);
        \coordinate (c) at (8,-0.5);
        \coordinate (d) at (6,2.5);
        \coordinate (ab) at (4+0.7*2.5,-0.5-0.7*1);
        \coordinate (bc) at (8-0.7*1.5,-0.5-0.7*1);
        \coordinate (bd) at (6+0.7*0.5,2.5-0.7*4);

        \coordinate (bcd) at (0.7*8-0.49*1.5+0.3*6+0.21*0.5,-0.7*0.5-0.49*1+0.3*2.5-0.21*4);
        \coordinate (abc) at (0.7*8-0.49*1.5+0.3*4+0.21*2.5,-0.7*0.5-0.49*1-0.3*0.5-0.21*1);
        \coordinate (cd) at (0.9*8+0.1*6,-0.9*0.5+0.1*2.5);
        \coordinate (ca) at (0.9*8+0.1*4,-0.9*0.5-0.1*0.5);
        \draw[dashed, color=gray!70] (a)--(ca);
        \draw (a)--(ab)--(abc)--(ca)--(cd)--(d)--cycle;
        \draw (ab)--(abc)--(bcd)--(bd)--cycle;
        \draw (d)--(bd);
        
        \filldraw[fill=gray!70] (bcd)--(abc)--(ca)--(cd)--cycle;
        \filldraw[fill=white] (bd)--(ab)--(abc)--(bcd)--cycle;
        
        \node at (5.5,0.5) {\tiny $F_{01}$};
        \node at (6.9,0.5) {\tiny $F_{12}$};
        \node at (4,-1.5) {\tiny $F_{23}$};
        \node at (8.4,1.5) {\tiny $F_{34}$};
        
        \draw[->] (4.4,-1.5) to [bend right=45] (5.5,-0.9);
        \draw[->] (8,1.5) to [bend right=45] (7,0.7);
    \end{tikzpicture}
    \subcaption{The gray square is $F_{13}$.}
    \end{subfigure}
    \caption{Construction of $P_D$ using face truncations}
    \label{fig:poly-dissect}
\end{figure}
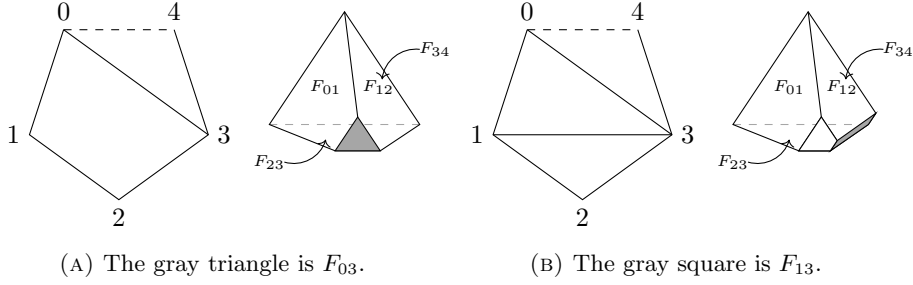

\begin{lem}\label{lem:comb_poly}
    For a dissection $D$ of $\mathsf{P}_{n+2}$, the polytope $P_D$ is combinatorially equivalent to a product of simplices.
\end{lem}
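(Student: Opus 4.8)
The plan is to prove the sharper statement that $P_D$ is combinatorially equivalent to $\prod_{v}\Delta^{\ell_v-1}$, the product running over the internal vertices $v$ of $\tau(D)$, where $\ell_v=|E(\mathsf{P}^{(v)})|$ is the number of children of $v$. I would argue by induction on the number of truncations, i.e.\ on the index $t=0,1,\dots,k-1$ counting how many of the ordered interior edges $\{i_1,j_1\},\dots,\{i_{k-1},j_{k-1}\}$ have been processed. Writing $D^{(t)}$ for the partial dissection using the first $t$ interior edges and $P^{(t)}$ for the corresponding polytope, the induction hypothesis I would carry is: $P^{(t)}$ is combinatorially a product of simplices, with one simplex factor for each polygon $\mathsf{Q}$ of $D^{(t)}$, the facets of that factor being in bijection with the non-distinguished edges $E(\mathsf{Q})$; equivalently the facets of $P^{(t)}$ biject with $E(D^{(t)})$ and the partition of facets into factors is the partition $E(D^{(t)})=\bigsqcup_{\mathsf{Q}}E(\mathsf{Q})$. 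The base case $t=0$ is $P^{(0)}=\Delta^n$, a single simplex whose $n+1$ facets $F_{0,1},\dots,F_{n,n+1}$ are exactly $E(\mathsf{P}_{n+2})$.

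For the inductive step I would use two elementary facts about face truncation of simple polytopes (products of simplices are simple). First, truncating the face of $\Delta^{\ell-1}$ cut out by $d$ of its facets (a face of codimension $d$, hence a subsimplex $\Delta^{\ell-1-d}$) produces a polytope combinatorially equal to $\Delta^{d-1}\times\Delta^{\ell-d}$; one checks this directly on the defining inequalities, and the familiar cases of vertex and edge truncation ($\Delta^2\rightsquigarrow\Delta^1\times\Delta^1$, $\Delta^3\rightsquigarrow\Delta^1\times\Delta^2$) illustrate it. Second, for a product $A\times B$, truncating a face of the form $G\times B$ with $G$ a face of $A$ (cut by a halfspace depending only on the coordinates of $A$) yields $(\mathrm{Tr}_G A)\times B$; since truncation of a simple polytope is a purely combinatorial operation, this lets me localize the truncation to a single simplex factor. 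Granting that the face truncated at step $t+1$ lies inside one factor $\Delta^{\ell-1}$ and is the intersection of $d=j_{t+1}-i_{t+1}$ of its facets, these two facts replace that factor by $\Delta^{d-1}\times\Delta^{\ell-d}$ and leave the others untouched, so $P^{(t+1)}$ is again a product of simplices; moreover the new facet records the new interior edge $\{i_{t+1},j_{t+1}\}$ and the two new factors carry exactly the edge sets of the two polygons into which $\{i_{t+1},j_{t+1}\}$ splits the old polygon, so the bijection invariant persists.

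The hard part is exactly the hypothesis just granted: that the $d$ original facets $F_{i_{t+1},i_{t+1}+1},\dots,F_{j_{t+1}-1,j_{t+1}}$ all survive as facets of a single simplex factor of $P^{(t)}$ and meet in a proper (nonempty) face. I would deduce this from the non-crossing condition together with the preorder. Because diagonals do not cross, the closed intervals $[i,j]$ attached to the interior edges form a laminar family, so they nest exactly as the vertices of $\tau(D)$ do; in particular every interior edge $\{a,b\}$ with $[a,b]\subsetneq[i_{t+1},j_{t+1}]$ corresponds to a descendant of the vertex $v$ with $\phi(v)=\{i_{t+1},j_{t+1}\}$. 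Since the preorder visits $v$ before all of its descendants, none of those inner diagonals has been drawn at stage $t$, so the vertices $i_{t+1},\dots,j_{t+1}$ and the boundary edges between them still lie on one undivided polygon $\mathsf{Q}$ of $D^{(t)}$; by the invariant, the facets $F_{i_{t+1},i_{t+1}+1},\dots,F_{j_{t+1}-1,j_{t+1}}$ are facets of the single factor attached to $\mathsf{Q}$. That $\mathsf{Q}$ also carries its own distinguished edge (or further upper boundary) forces $\ell\ge d+1$, so the intersection is a proper face, as needed. Carrying the induction to $t=k-1$ gives $P_D\cong\prod_{v}\Delta^{\ell_v-1}$, and in particular $P_D$ is combinatorially a product of simplices.
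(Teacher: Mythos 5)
Your proof is correct, but it takes a genuinely different route from the paper's. The paper argues at the level of the fan: it asserts, from the construction, exactly which subsets of $E(D)$ span cones of $\Sigma_D$ --- namely, $S\subseteq E(D)$ spans a cone if and only if $S$ contains no full edge set $E(\mathsf{P}^{(i)})$ --- so the minimal non-faces of $P_D$ are precisely the sets $E(\mathsf{P}^{(i)})$, and $P_D\cong\prod_{i=1}^k\Delta^{|E(\mathsf{P}^{(i)})|-1}$ follows at once; this characterization is then reused downstream, since these sets are exactly the primitive collections needed in Theorem~\ref{thm:Fano}. You instead run a polytopal induction on the truncations themselves, carrying the invariant that after $t$ steps the polytope is a product of simplices whose factors biject with the polygons of the partial dissection. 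Your two truncation facts are standard and correct: truncating the codimension-$d$ face $F_1\cap\cdots\cap F_d$ of $\Delta^{\ell-1}$ yields $\Delta^{d-1}\times\Delta^{\ell-d}$ (after the cut the minimal non-faces are $\{F_1,\dots,F_d\}$ and $\{F_0,F_{d+1},\dots,F_\ell\}$, the join structure of the product), and a truncation supported on one factor of a product leaves the other factors untouched. Your laminar-family/preorder argument correctly supplies the one nontrivial input, that the $d$ facets to be intersected at step $t+1$ all lie in a single simplex factor; and your observation that $d<\ell$ is right for the reason you indicate --- the boundary cycle of $\mathsf{Q}$ cannot close via the single edge $\{i_{t+1},j_{t+1}\}$, which is not yet drawn and is not the distinguished edge of $\mathsf{P}_{n+2}$, so $E(\mathsf{Q})$ contains at least one edge beyond the $d$ boundary edges. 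What your route buys is a verification the paper leaves implicit: that every truncation in the construction of $P_D$ is actually well defined (the face being cut exists and is proper at each stage), which is exactly where the preorder listing of the diagonals matters. What the paper's route buys is brevity and the explicit minimal non-face (primitive collection) description used later for the Fano property and for recovering $\tau(D)$; note that your stronger inductive invariant recovers the same explicit factorization $\prod_{v}\Delta^{\ell_v-1}=\prod_{i=1}^k\Delta^{|E(\mathsf{P}^{(i)})|-1}$, so the two arguments end at the same refined statement.
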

\begin{proof}
    Let $D$ be a $k$-dissection of $\mathsf{P}_{n+2}$ and $E(D)=\cup_{i=1}^k E(\mathsf{P}^{(i)})$. If $k=1$, then every proper subset of $E(\mathsf{P}_{n+2})$ corresponds to a face of $\Delta^n$. When $k=2$, assume
    \begin{align*}
        E(\mathsf{P}^{(1)})&=\{\{0,1\},\dots,\{i_1-1,i_1\},\{i_1,j_1\},\{j_1,j_1+1\},\dots,\{n,n+1\}\},\\
        E(\mathsf{P}^{(2)})&=\{\{i_1,i_1+1\},\dots,\{j_1-1,j_1\}\}.
    \end{align*}
    Then every proper subset of $E(D)$ forms a cone in $\Sigma_D$ unless it is $E(\mathsf{P}^{(1)})$ or $E(\mathsf{P}^{(2)})$.
    Hence every proper subset of $E(D)$ corresponds to a face of $P_D$ except $E(\mathsf{P}^{(1)})$ and $E(\mathsf{P}^{(2)})$.
    Thus $P_D=\Delta^{|E(\mathsf{P}^{(1)})|-1)}\times \Delta^{|E(\mathsf{P}^{(2)}))|-1}$.
    In general, for arbitrary $k$, a subset $S$ of $E(D)$ forms a cone in $\Sigma_D$ if and only if $S$ does not contain $E(\mathsf{P}^{(i)})$ for any $i\in [k]$. Thus the polytope $P_D$ is combinatorially equivalent to the product of simplices $\prod_{i=1}^k \Delta^{|E(\mathsf{P}^{(i)})|-1}$.
\end{proof}

Since every smooth projective toric variety over a product of simplices is a generalized Bott manifold (\cite[Theorem 6.4]{CMS_osaka}), every toric variety of Schr\"{o}der type is a generalized Bott manifold. More preciesely, for a $k$-dissection of the polygon $\mathsf{P}_{n+2}$, the toric variety $X_D$ is a $k$-stage generalized Bott manifold of dimension $n$. If $k=n$, we get an $n$-stage Bott manifold.

Note that the primitive integral generator of the ray corresponding to $\{i,j\}$, $i<j$, is $\mathbf{e}_j-\mathbf{e}_i$ and we denote it by $\mathbf{u}_{ij}$.

\begin{thm}\label{thm:Fano}
    Every toric variety of Schr\"{o}der type is a Fano generalized Bott manifold.
\end{thm}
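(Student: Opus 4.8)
The plan is to split the statement into its two halves and dispatch the generalized Bott part by citation. By Lemma~\ref{lem:comb_poly} together with \cite[Theorem 6.4]{CMS_osaka}, $X_D$ is already known to be a projective smooth generalized Bott manifold, so the only thing left to establish is the Fano property. For that I would invoke Batyrev's criterion (Proposition~\ref{prop:batyrev}): it suffices to exhibit every primitive collection $R$ of $\Sigma_D$ and check that $\deg R > 0$.

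First I would pin down the primitive collections. A primitive collection is precisely a minimal non-face, and the proof of Lemma~\ref{lem:comb_poly} identifies the non-faces for us: a subset $S \subseteq E(D)$ fails to span a cone of $\Sigma_D$ exactly when $S \supseteq E(\mathsf{P}^{(i)})$ for some $i \in [k]$. Hence the primitive collections are the minimal members of $\{E(\mathsf{P}^{(1)}), \dots, E(\mathsf{P}^{(k)})\}$ under inclusion. To conclude that all $k$ of these sets are primitive collections I would verify that they form an antichain: each cell of a dissection has at least three vertices, so $|E(\mathsf{P}^{(i)})| \geq 2$, whereas two distinct cells of a polygon dissection share at most one edge; therefore no $E(\mathsf{P}^{(i)})$ can be contained in another, and the primitive collections are exactly $E(\mathsf{P}^{(1)}), \dots, E(\mathsf{P}^{(k)})$. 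I expect this identification, and in particular the antichain check, to be the main point that needs care, since the whole degree computation hinges on knowing the list of primitive collections precisely.

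With the collections in hand, the degree computation is a telescoping. Writing the vertices of $\mathsf{P}^{(i)}$ as $v_0 < v_1 < \cdots < v_\ell$, we have $E(\mathsf{P}^{(i)}) = \{\{v_0,v_1\}, \dots, \{v_{\ell-1}, v_\ell\}\}$ with ray generators $\mathbf{u}_{v_{j-1}v_j} = \mathbf{e}_{v_j} - \mathbf{e}_{v_{j-1}}$, so
\[
\sum_{j=1}^{\ell} \mathbf{u}_{v_{j-1}v_j} = \mathbf{e}_{v_\ell} - \mathbf{e}_{v_0} = \mathbf{u}_{v_0 v_\ell},
\]
the generator attached to the distinguished edge $\{v_0, v_\ell\}$ of $\mathsf{P}^{(i)}$. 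Two cases then arise. If $\mathsf{P}^{(i)}$ is the cell containing the distinguished edge $\{0,n+1\}$ of $\mathsf{P}_{n+2}$, then $\{v_0,v_\ell\}=\{0,n+1\}$ and $\mathbf{u}_{v_0 v_\ell}=\mathbf{e}_{n+1}-\mathbf{e}_0=\mathbf{0}$, giving the primitive relation $\sum = \mathbf{0}$ and $\deg R = \ell > 0$. Otherwise $\{v_0,v_\ell\}$ is a genuine edge of $E(D)$, hence a ray of $\Sigma_D$, and the relation reads $\sum = 1\cdot\mathbf{u}_{v_0 v_\ell}$, so $\deg R = \ell - 1 \geq 1 > 0$. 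Since every primitive collection has positive degree, Proposition~\ref{prop:batyrev} yields that $X_D$ is Fano, completing the proof.
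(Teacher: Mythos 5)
Your proof is correct and takes essentially the same route as the paper's: it cites Lemma~\ref{lem:comb_poly} together with \cite[Theorem 6.4]{CMS_osaka} for the generalized Bott structure, identifies the primitive collections of $\Sigma_D$ with the edge sets $E(\mathsf{P}^{(i)})$, and establishes $\deg R>0$ by the same telescoping primitive relation (landing on $\boldsymbol{0}$ for the cell containing $\{0,n+1\}$ and on $\mathbf{u}_{v_0v_\ell}$ otherwise) before invoking Batyrev's criterion. Your explicit antichain verification---that distinct cells share at most one edge while each $|E(\mathsf{P}^{(i)})|\geq 2$---merely fills in a detail the paper leaves implicit when it reads off the primitive collections from the proof of Lemma~\ref{lem:comb_poly}.
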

\begin{proof}
    Let $D$ be a $k$-dissection of $\mathsf{P}_{n+2}$ and $E(D)=\cup_{i=1}^k E(\mathsf{P}^{(i)})$. Lemma~\ref{lem:comb_poly} together with \cite[Theorem 6.4]{CMS_osaka} implies that $X_D$ is a $k$-stage generalized Bott manifold. Hence it is enough to show that $X_D$ is Fano.
    
    From the proof of Lemma~\ref{lem:comb_poly}, the primitive collections of $\Sigma_D$ correspond to the edge sets $E(\mathsf{P}^{(i)})$.
    Let $E(\mathsf{P}^{(i)})=\{\{i_1,i_2\},\dots,\{i_{\ell-1},i_\ell\}\}$, where $i_1<i_2<\dots<i_\ell$. Then the associated primitive relation is
    \begin{equation*}\label{eq:primitive-dissect}
        \begin{split}
        &\mathbf{u}_{i_1i_2}+\cdots+\mathbf{u}_{i_{\ell-1}i_\ell}\\
        &=(\mathbf{e}_{i_2}-\mathbf{e}_{i_1})+\dots+(\mathbf{e}_{i_\ell}-\mathbf{e}_{i_{\ell-1}})\\
        &=\mathbf{e}_{i_\ell}-\mathbf{e}_{i_1}.
        \end{split}
    \end{equation*}
    Note that $\{i_1,i_\ell\}$ is the distinguished edge of $\mathsf{P}^{(i)}$ in $D$. Hence the associated primitive relation is
    \begin{equation}\label{eq:primitive-dissect}
        \mathbf{u}_{i_1i_2}+\cdots+\mathbf{u}_{i_{\ell-1}i_\ell}=\begin{cases}
        \boldsymbol{0}&\text{ if }i_1=0\text{ and }i_\ell=n+1,\\
        \mathbf{u}_{i_1i_\ell} &\text{ otherwise.}
        \end{cases}
    \end{equation}
    Note that $\{i_1,i_\ell\}$ is an edge in $E(\mathsf{P}^{(j)})$ for some $j\neq i$.
    Hence every primitive collection of $\Sigma_D$ has a positive degree. 
    By Proposition~\ref{prop:batyrev}, we conclude that
    the toric variety $X_D$ is a Fano generalized Bott manifold.
\end{proof}

It should be noted that the primitive relations in~\eqref{eq:primitive-dissect} define the Schr\"{o}der tree $\tau(D)$ as follows:
\begin{enumerate}
    \item the vertices $v$ such that
    $\phi(v)\in\{i_1i_2,\dots,i_{\ell-1}i_{\ell}\}$
    have the same parent, and
    \item if $\mathbf{u}_{i_1i_2}+\cdots+\mathbf{u}_{i_{\ell-1}i_\ell}=\boldsymbol{0}$, then the parent is the root of $\tau(D)$; otherwise, the parent is the vertex $v_{i_1i_\ell}$.
\end{enumerate}
By applying Proposition~\ref{prop:batyrev-iso} to the toric varieties $X_D$, we get the following.

\begin{thm}\label{thm:variety-tree}
    The toric varieties $X_D$ and $X_{\widetilde{D}}$ of Schr\"{o}der type are isomorphic as varieties if and only if $\tau(D)$ and $\tau(\widetilde{D})$ are isomorphic as unodered rooted trees.
\end{thm}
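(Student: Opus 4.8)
The plan is to apply Batyrev's classification of smooth Fano toric varieties (Proposition~\ref{prop:batyrev-iso}), which is available because every $X_D$ of Schr\"oder type is Fano by Theorem~\ref{thm:Fano}. The preceding remark already records the precise dictionary I will use: under the labeling $\phi$, the rays of $\Sigma_D$ are in bijection with the non-root vertices of $\tau(D)$ (the ray $\mathbf{u}_{ij}$ corresponds to the vertex $v$ with $\phi(v)=\{i,j\}$), the primitive collections are exactly the edge sets $E(\mathsf{P}^{(i)})$, that is, the sets of children of a common vertex of $\tau(D)$, and each primitive relation~\eqref{eq:primitive-dissect} reads ``the sum of the child-rays equals the parent-ray,'' with the right-hand side being $\boldsymbol{0}$ precisely when the parent is the root. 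Since primitive collections coincide with the minimal non-faces, the whole face lattice, and in particular the set of maximal cones, is already determined by the primitive collections alone.

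For the forward direction I suppose $\tau(D)$ and $\tau(\widetilde{D})$ are isomorphic as unordered rooted trees via some $\psi$. Restricting $\psi$ to non-root vertices and transporting through $\phi$ gives a bijection between the rays of $\Sigma_D$ and those of $\Sigma_{\widetilde{D}}$. Because a rooted-tree isomorphism carries sibling sets to sibling sets and the parent of a vertex to the parent of its image (sending root to root), this ray bijection takes primitive collections to primitive collections and matches the primitive relations of~\eqref{eq:primitive-dissect} term by term; as noted above it then also induces a bijection of maximal cones. Proposition~\ref{prop:batyrev-iso} now yields a toric isomorphism $X_D\cong X_{\widetilde{D}}$, which is in particular an isomorphism of varieties.

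For the converse I must pass from an abstract isomorphism of varieties to a toric one, and I expect this to be the main obstacle, since Proposition~\ref{prop:batyrev-iso} speaks only of toric isomorphisms. Here I would invoke the standard fact that for a complete smooth toric variety the acting torus is a maximal torus of the (linear algebraic) automorphism group (Demazure); any two maximal tori being conjugate, an isomorphism $X_D\cong X_{\widetilde{D}}$ of varieties can be composed with an automorphism of $X_{\widetilde{D}}$ so as to intertwine the two tori, hence becomes toric. Once the isomorphism is toric, Proposition~\ref{prop:batyrev-iso} supplies a ray bijection preserving the primitive relations. Reading this through the dictionary, the bijection respects sibling sets and the ``children sum to parent'' relations, so extending it by sending the root of $\tau(D)$ to the root of $\tau(\widetilde{D})$ produces a parent-preserving graph isomorphism, that is, an isomorphism of unordered rooted trees. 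I emphasize that only the \emph{unordered} structure is recovered: Batyrev's bijection is free to permute the rays and therefore forgets the planar left-to-right order of the children coming from the dissection, which is exactly why plane trees that agree as abstract rooted trees give isomorphic varieties.
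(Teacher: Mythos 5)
Your proposal follows essentially the same route as the paper: the paper's entire proof is the one-line application of Proposition~\ref{prop:batyrev-iso}, using exactly the dictionary you describe between the primitive relations~\eqref{eq:primitive-dissect} and the parent--child structure of $\tau(D)$ that is recorded immediately before the theorem. Your only addition is to make explicit the passage from an abstract variety isomorphism to a toric one via Demazure's maximal-torus theorem, a point the paper leaves implicit in its citation of Batyrev's classification; this is a legitimate and worthwhile detail to spell out, but it does not change the method.
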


Let 
\begin{align*}
    \mathcal{SR}(n,k)&=\{\text{Schr\"{o}der trees with $k$ internal vertices and $n$ leaves}\}/\sim, \\ \mathcal{SR}_n&=\bigcup_{k=0}^{n-1} \mathcal{SR}(n,k)
\end{align*}
where two Schr\"{o}der trees are equivalent if they are the same as unordered rooted trees.
Theorem~\ref{thm:variety-tree} says that $|\mathcal{SR}_{n+1}|$ is the number of isomorphism classes of $n$-dimensional toric varieties of Schr\"{o}der type.

Riordan studied on the numbers $|\mathcal{SR}(n,k)|$ and $|\mathcal{SR}_n|$, and he used the term \emph{series-reduced} in~\cite{Riordan}. That is, each element of $\mathcal{SR}_n$ is called a series-reduced rooted tree.
Let 
\[
s_1(y):=1 \quad \text{and} \quad  s_n(y):=\sum_{k\geq 1} s(n,k) y^k \quad (n\geq 2),
\]
where \(s(n,k)\) denotes the size of $\mathcal{SR}(n,k)$.
A recurrence relation for \(s_n(y)\) is given as follows.

\begin{prop}[{\cite[Equation~(11)]{Riordan}}]
For \(n\geq 2\), 
\[
n(1+y)s_n(y)=ys_n^{*}(y)-s_{n-1}^{*}(y)+(1+y)\sum_{i=1}^{n-1}s_{i}^{*}(y)s_{n-i}(y),
\]
where
\[
s_1^{*}(y):=1 \quad \text{and} \quad s_n^{*}(y):=\sum_{d}d s_d(y^{n/d})
\]
with the sum over all divisors of \(n\).
\end{prop}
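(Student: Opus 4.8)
The plan is to translate the recursive structure of series-reduced rooted trees into a functional equation for a bivariate generating function and then apply a pointing (Euler) operator $x\partial_x$ to produce the factor $n$ on the left-hand side. I would write $S(x,y)=\sum_{n\ge 1}s_n(y)x^n$, where $x$ marks leaves and $y$ marks internal vertices, so that $[x^n y^k]S=s(n,k)$. The base case is the single leaf (weight $x$), while every other series-reduced rooted tree consists of a root, an internal vertex of weight $y$, together with an \emph{unordered} collection of at least two subtrees, each again a series-reduced rooted tree. Since the children are unordered, this collection is a multiset, so P\'olya's multiset construction gives
\[
S=x+y\bigl(M-1-S\bigr),\qquad M:=\exp\Bigl(\sum_{k\ge 1}\tfrac1k\,S(x^k,y^k)\Bigr),
\]
where $M=\mathrm{MSET}(S)$ and we subtract $1$ (the empty collection) and $S$ (a single child) to enforce at least two children. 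Rearranging, $(1+y)S=x+y(M-1)$.

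The key observation I would isolate is that the starred series is exactly the pointed logarithmic derivative of $M$. Setting $L:=\log M=\sum_{k\ge1}\tfrac1k\,S(x^k,y^k)$ and writing $L_x$ for $\partial L/\partial x$, a direct coefficient computation gives
\[
[x^N]\,xL_x=\sum_{kn=N} n\, s_n(y^k)=\sum_{d\mid N} d\, s_d(y^{N/d})=s_N^*(y),
\]
so that $xL_x=\sum_{N\ge1}s_N^*(y)x^N=:S^*$, and in particular $s_1^*(y)=1$. This identity is what explains the otherwise opaque definition of $s_n^*$: it is precisely the power-sum (cycle-index) contribution arising from the multiset construction.

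Next I would apply the pointing operator $x\partial_x$ to $(1+y)S=x+y(M-1)$. Using $xM_x=M\cdot xL_x=M\,S^*$, this yields $(1+y)\,xS_x=x+y\,M\,S^*$. Eliminating $M$ through $yM=y+(1+y)S-x$ (read off from the functional equation) gives
\[
(1+y)\,xS_x = x + y\,S^* + (1+y)\,S^*S - x\,S^*.
\]
Extracting the coefficient of $x^n$ for $n\ge 2$ then reads off the recurrence: the left side contributes $n(1+y)s_n(y)$; the term $x$ contributes nothing; $yS^*$ gives $y\,s_n^*(y)$; $(1+y)S^*S$ gives $(1+y)\sum_{i=1}^{n-1}s_i^*(y)s_{n-i}(y)$, since both $S^*$ and $S$ begin in degree one; and $-xS^*$ gives $-s_{n-1}^*(y)$. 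Collecting these reproduces the stated identity.

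The step I expect to be the main obstacle is the careful bookkeeping in the multiset/P\'olya stage: verifying that the bivariate multiset construction substitutes $y\mapsto y^k$ in tandem with $x\mapsto x^k$, and then confirming that pointing the logarithm $L$ produces exactly $\sum_{d\mid N}d\,s_d(y^{N/d})$ rather than a neighbouring sum (for instance the unweighted $\sum_{d\mid N}s_d(y^{N/d})$). Once the identification $xL_x=S^*$ is pinned down, the remaining work is routine coefficient extraction, and the boundary data ($s_1^*=1$, the lone $x$, and the degree shift in $xS^*$) only require a quick sanity check, e.g.\ against the $n=2$ case where $s_2(y)=y$.
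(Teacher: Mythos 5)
Your proposal is correct, and every step checks out: the bivariate multiset relation $(1+y)S=x+y(M-1)$ with $M=\exp\bigl(\sum_{k\ge1}\tfrac1k S(x^k,y^k)\bigr)$ is the right translation of the class of series-reduced rooted trees (the substitution $y\mapsto y^k$ in tandem with $x\mapsto x^k$ is forced because each tree's weight is a monomial $x^{\ell}y^{m}$, so its $k$-th power is $x^{k\ell}y^{km}$), the identification $xL_x=\sum_{N\ge1}s_N^*(y)x^N$ follows from the coefficient computation $[x^N]\,xL_x=\sum_{nk=N}n\,s_n(y^k)=\sum_{d\mid N}d\,s_d(y^{N/d})$ exactly as you state, and the pointing step plus elimination of $M$ via $yM=y+(1+y)S-x$ yields $(1+y)xS_x=x+yS^*+(1+y)S^*S-xS^*$, whose $x^n$-coefficient for $n\ge2$ is the stated recurrence (and your $n=2$ check, $s_2^*(y)=1+2y$ giving $2y+2y^2$ on both sides, is right). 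One point of comparison: the paper itself gives no proof of this proposition at all --- it is quoted verbatim as Equation~(11) from Riordan's 1976 paper and used as a black box to tabulate $s(n,k)$ --- so there is no ``paper route'' to measure against; what you have produced is a self-contained derivation, and it is essentially the classical one (Riordan's own argument runs through the same functional equation and logarithmic differentiation, phrased in pre-symbolic-method language), which explains, as you note, why the otherwise opaque $s_n^*$ is precisely the power-sum contribution of the P\'olya exponential. The only cosmetic caveat is to make explicit that $s_1(y)=1$ (the single leaf carries no $y$), so that $S=x+\sum_{n\ge2}s_n(y)x^n$ is consistent with the paper's convention and with $s_1^*=1$; you use this implicitly and correctly throughout.
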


The numbers \(s(n,k)\) and \(s_n(1)\) for \(n\leq 10\) are given in  Table~\ref{table}. Clearly, $s(n,1)=1$ and $s(n,2)=n-2$, and it is known that \(s(n,3)=\frac{(n-3)(n-2)}{2}+\lfloor \frac{(n-2)^2}{4}\rfloor\).
However, for general $k$, the explicit formula for $s(n,k)$ is unknown. See sequences A106179 and A000699 in OEIS \cite{OEIS} for more numbers \(s(n,k)\) and \(s_n(1)\), respectively. Note that 
\(s_n(1)\) is the total number of series-reduced rooted trees with $n$ leaves. 

\begin{table}[htb!]
\centering
\begin{tabular}{c|ccccccccccccccccccccc}
\noalign{\smallskip}\noalign{\smallskip}
\(k \backslash n\)&& 1 && 2 && 3 && 4 && 5 && 6 && 7 && 8 && 9 && 10 &\(\cdots\)\\
\hline
1 && && 1 && 1 && 1 && 1 && 1 && 1 && 1 && 1 && 1 &\\
2 && && && 1 && 2 && 3 && 4 && 5 && 6 && 7 && 8 &\\
3 && && && && 2 && 5 && 10 && 16 && 24 && 33 && 44 &\\
4 && && && && && 3 && 12 && 29 && 57 && 99 && 157 &\\
5 && && && && && && 6 && 28 && 84 && 192 && 382 &\\
6 && && && && && && && 11 && 66 && 231 && 615 &\\
7 && && && && && && && && 23 && 157 && 634 &\\
8 && && && && && && && && && 46 && 373 &\\
9 && && && && && && && && && && 98 &\\
\(\vdots\) && && && && && && && && && && &\\
\hline
\(s_n(1)\) && 1 && 1 && 2 && 5 && 12 && 33 && 90 && 261 && 766 && 2312 &\\
\end{tabular}
\caption{The numbers \(s(n,k)\) and \(s_n(1)\) for \(1\leq n \leq 10\)}\label{table}
\end{table}

\begin{cor}\label{cor:number}
    The number of isomorphism classes of $n$-dimensional toric varieties of Schr\"{o}der type is equal to $s_{n+1}(1)$, the number of series-reduced rooted trees with $n+1$ leaves.
\end{cor}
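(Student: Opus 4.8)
The plan is to assemble this count by chaining together three facts already in place: Etherington's bijection between polygon dissections and Schr\"{o}der trees, the variety-level classification of Theorem~\ref{thm:variety-tree}, and the bookkeeping definitions of $\mathcal{SR}_{n+1}$ and $s_{n+1}(1)$. Since ``of Schr\"{o}der type'' means $X_D$ for a dissection $D$ of $\mathsf{P}_{n+2}$, the $n$-dimensional toric varieties of Schr\"{o}der type are exactly the $X_D$ as $D$ ranges over dissections of $\mathsf{P}_{n+2}$. First I would record that the correspondence $D\mapsto \tau(D)$ is a bijection from dissections of $\mathsf{P}_{n+2}$ onto Schr\"{o}der trees with $n+1$ leaves, sending a $k$-dissection to a tree with $k$ internal vertices.

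Next I would pass to isomorphism classes. By Theorem~\ref{thm:variety-tree}, $X_D\cong X_{\widetilde{D}}$ as varieties if and only if $\tau(D)$ and $\tau(\widetilde{D})$ coincide as unordered rooted trees. Hence $D\mapsto [\tau(D)]$ descends to a well-defined bijection between the set of isomorphism classes of $n$-dimensional toric varieties of Schr\"{o}der type and the set of Schr\"{o}der trees with $n+1$ leaves taken modulo the relation ``identical as unordered rooted trees.'' By definition this latter set is precisely $\mathcal{SR}_{n+1}=\bigcup_{k}\mathcal{SR}(n+1,k)$.

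Finally I would match this cardinality to the generating function at $y=1$. A Schr\"{o}der tree with $n+1\geq 2$ leaves has between $1$ and $n$ internal vertices, so the $k=0$ term is empty and $|\mathcal{SR}_{n+1}|=\sum_{k\geq 1}|\mathcal{SR}(n+1,k)|=\sum_{k\geq 1}s(n+1,k)=s_{n+1}(1)$, where $s_{n+1}(1)$ is by definition the number of series-reduced rooted trees with $n+1$ leaves. Composing the two bijections yields the stated equality.

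This argument presents no genuine obstacle, being a matter of correctly composing earlier results; the only points demanding care are the index shift (an $n$-dimensional variety corresponds to a dissection of the $(n+2)$-gon, hence to a tree with $n+1$ leaves, not $n$) and the verification that the equivalence ``same as an unordered rooted tree'' appearing in Theorem~\ref{thm:variety-tree} is literally the relation $\sim$ used to define $\mathcal{SR}_{n+1}$, so that the two notions of isomorphism class coincide exactly and no classes are merged or split in passing between them.
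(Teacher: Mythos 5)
Your proposal is correct and follows essentially the same route as the paper, which deduces the corollary directly by combining Etherington's bijection $D\mapsto\tau(D)$ with Theorem~\ref{thm:variety-tree} and the definitions of $\mathcal{SR}_{n+1}$ and $s_{n+1}(1)$. The two points you flag for care (the index shift from $\mathsf{P}_{n+2}$ to trees with $n+1$ leaves, and the identification of the variety-isomorphism relation with the equivalence $\sim$ defining $\mathcal{SR}_{n+1}$) are exactly what the paper leaves implicit, so you have merely made its argument explicit.
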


\begin{ex}
 Figure~\ref{fig:iso_trees} shows the equivalent classes of Schr\"{o}der trees with four leaves and the corresponding dissections of \(\mathsf{P}_{5}\). This agrees that \(s(4,1)=1\), \(s(4,2)=2\), and \(s(4,3)=2\) so that \(s_4(y)=y+2y^2+2y^3\) as in Table~\ref{table}. By using this and Theorem~\ref{thm:main}, we will see that the cohomology ring of~\(X_{D}\) for a dissection~\(D\) of~\(\mathsf{P}_{5}\) can be written as one of the following forms:
\begin{align*}
&\mathbb{Z}[x_{1}]/\langle x_1^4 \rangle, 
\quad \mathbb{Z}[x_{1},x_{2}]/\langle x_1^2, x_2(x_1+x_2)^2 \rangle,
\quad \mathbb{Z}[x_{1},x_{2}]/\langle x_1^3, x_2(x_1+x_2) \rangle,\\
&\mathbb{Z}[x_{1},x_{2},x_{3}]/\langle x_1^2,x_2(x_1+x_2),x_3(x_1+x_2+x_3) \rangle,\\
& \mathbb{Z}[x_{1},x_{2},x_{3}]/\langle x_1^2, x_2^2, x_3(-x_1+x_2+x_3) \rangle.
\end{align*}
\end{ex}

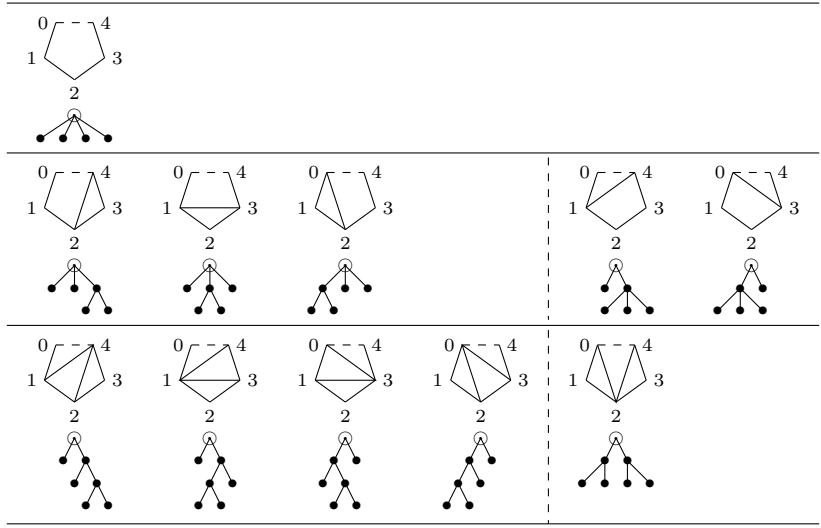
\begin{figure}[hbt]
\scriptsize{

\centering
\begin{tikzpicture}[scale=.6]
\draw (-1.5,1) -- (16.5,1);
\node at (0,-1.5) {\(\odot\)};
\foreach \x in {1,...,4}{
	\coordinate (\x)  at (-1.25+\x*0.5,-2);
	\node at (\x) {\(\bullet\)};
	\draw (0,-1.5) -- (\x);
}


\foreach \x in {0,1,...,4}{
	\coordinate (\x)  at (-90+3*72+\x*72:0.7cm) ;	
} 

\foreach \x in {0,1}{
	\node[left] at (\x) {$\x$};
}
\foreach \x in {3,4}{
	\node[right] at (\x) {$\x$};
}
\foreach \x in {2}{
	\node[below] at (\x) {$\x$};
}
\draw[dashed] (0) -- (4);

\foreach \x/\y in {0/1,1/2,2/3,3/4}{
	\draw (\x) -- (\y);
}

\end{tikzpicture}


\begin{tikzpicture}[scale=.6]

\draw (-1.5,1) -- (16.5,1);
\draw[dashed] (10.5,0.9) -- (10.5,-2.7);
\node at (0,-1.5) {\(\odot\)};
\foreach \x in {1,2,3}{
	\coordinate (\x)  at (-1+\x*0.5,-2);
	\node at (\x) {\(\bullet\)};
	\draw (0,-1.5) -- (\x);
}

\foreach \x in {4,5}{
	\coordinate (\x)  at (-1.75+\x*0.5,-2.5);
	\node at (\x) {\(\bullet\)};
	\draw (3) -- (\x);
}

\foreach \x in {0,1,...,4}{
	\coordinate (\x)  at (-90+3*72+\x*72:0.7cm) ;	
} 

\foreach \x in {0,1}{
	\node[left] at (\x) {$\x$};
}
\foreach \x in {3,4}{
	\node[right] at (\x) {$\x$};
}
\foreach \x in {2}{
	\node[below] at (\x) {$\x$};
}
\draw[dashed] (0) -- (4);

\foreach \x/\y in {0/1,1/2,2/3,3/4}{
	\draw (\x) -- (\y);
}
\draw (2) -- (4);


\node at (3,-1.5) {\(\odot\)};
\foreach \x in {1,2,3}{
	\coordinate (\x)  at (2+\x*0.5,-2);
	\node at (\x) {\(\bullet\)};
	\draw (3,-1.5) -- (\x);
}

\foreach \x in {4,5}{
	\coordinate (\x)  at (3-2.25+\x*0.5,-2.5);
	\node at (\x) {\(\bullet\)};
	\draw (2) -- (\x);
}


\foreach \x in {0,1,...,4}{
	\coordinate (\x)  at  ({3+0.7*cos(-90+3*72+\x*72)},{0.7*sin(-90+3*72+\x*72)});
} 

\foreach \x in {0,1}{
	\node[left] at (\x) {$\x$};
}
\foreach \x in {3,4}{
	\node[right] at (\x) {$\x$};
}
\foreach \x in {2}{
	\node[below] at (\x) {$\x$};
}
\draw[dashed] (0) -- (4);

\foreach \x/\y in {0/1,1/2,2/3,3/4}{
	\draw (\x) -- (\y);
}
\draw (1) -- (3);


\node at (6,-1.5) {\(\odot\)};
\foreach \x in {1,2,3}{
	\coordinate (\x)  at (5+\x*0.5,-2);
	\node at (\x) {\(\bullet\)};
	\draw (6,-1.5) -- (\x);
}

\foreach \x in {4,5}{
	\coordinate (\x)  at (6-2.75+\x*0.5,-2.5);
	\node at (\x) {\(\bullet\)};
	\draw (1) -- (\x);
}


\foreach \x in {0,1,...,4}{
	\coordinate (\x)  at ({6+0.7*cos(-90+3*72+\x*72)},{0.7*sin(-90+3*72+\x*72)});
} 

\foreach \x in {0,1}{
	\node[left] at (\x) {$\x$};
}
\foreach \x in {3,4}{
	\node[right] at (\x) {$\x$};
}
\foreach \x in {2}{
	\node[below] at (\x) {$\x$};
}
\draw[dashed] (0) -- (4);

\foreach \x/\y in {0/1,1/2,2/3,3/4}{
	\draw (\x) -- (\y);
}
\draw (2) -- (0);


\node at (12,-1.5) {\(\odot\)};
\foreach \x in {1,2}{
	\coordinate (\x)  at (12-0.75+\x*0.5,-2);
	\node at (\x) {\(\bullet\)};
	\draw (12,-1.5) -- (\x);
}

\foreach \x in {3,4,5}{
	\coordinate (\x)  at (12-1.75+\x*0.5,-2.5);
	\node at (\x) {\(\bullet\)};
	\draw (2) -- (\x);
}


\foreach \x in {0,1,...,4}{
	\coordinate (\x)  at ({12+0.7*cos(-90+3*72+\x*72)},{0.7*sin(-90+3*72+\x*72)});
} 

\foreach \x in {0,1}{
	\node[left] at (\x) {$\x$};
}
\foreach \x in {3,4}{
	\node[right] at (\x) {$\x$};
}
\foreach \x in {2}{
	\node[below] at (\x) {$\x$};
}
\draw[dashed] (0) -- (4);

\foreach \x/\y in {0/1,1/2,2/3,3/4}{
	\draw (\x) -- (\y);
}
\draw (1) -- (4);


\node at (15,-1.5) {\(\odot\)};

\foreach \x in {1,2}{
	\coordinate (\x)  at (15-0.75+\x*0.5,-2);
	\node at (\x) {\(\bullet\)};
	\draw (15,-1.5) -- (\x);
}

\foreach \x in {3,4,5}{
	\coordinate (\x)  at (15-2.25+\x*0.5,-2.5);
	\node at (\x) {\(\bullet\)};
	\draw (1) -- (\x);
}


\foreach \x in {0,1,...,4}{
	\coordinate (\x)  at ({15+0.7*cos(-90+3*72+\x*72)},{0.7*sin(-90+3*72+\x*72)});
} 

\foreach \x in {0,1}{
	\node[left] at (\x) {$\x$};
}
\foreach \x in {3,4}{
	\node[right] at (\x) {$\x$};
}
\foreach \x in {2}{
	\node[below] at (\x) {$\x$};
}
\draw[dashed] (0) -- (4);

\foreach \x/\y in {0/1,1/2,2/3,3/4}{
	\draw (\x) -- (\y);
}
\draw (0) -- (3);
\end{tikzpicture}


\begin{tikzpicture}[scale=.6]

\draw (-1.5,1) -- (16.5,1);
\draw[dashed] (10.5,0.9) -- (10.5,-3.4);
\draw (-1.5,-3.4) -- (16.5,-3.4);


\node at (0,-1.5) {\(\odot\)};
\foreach \x in {1,2}{
	\coordinate (\x)  at (-0.75+\x*0.5,-2);
	\node at (\x) {\(\bullet\)};
	\draw (0,-1.5) -- (\x);
}

\foreach \x in {3,4}{
	\coordinate (\x)  at (-1.5+\x*0.5,-2.5);
	\node at (\x) {\(\bullet\)};
	\draw (2) -- (\x);
}

\foreach \x in {5,6}{
	\coordinate (\x)  at (-2.25+\x*0.5,-3);
	\node at (\x) {\(\bullet\)};
	\draw (4) -- (\x);
}


\foreach \x in {0,1,...,4}{
	\coordinate (\x)  at ({0.7*cos(-90+3*72+\x*72)},{0.7*sin(-90+3*72+\x*72)});
} 

\foreach \x in {0,1}{
	\node[left] at (\x) {$\x$};
}
\foreach \x in {3,4}{
	\node[right] at (\x) {$\x$};
}
\foreach \x in {2}{
	\node[below] at (\x) {$\x$};
}
\draw[dashed] (0) -- (4);

\foreach \x/\y in {0/1,1/2,2/3,3/4}{
	\draw (\x) -- (\y);
}
\draw (1) -- (4) -- (2);


\node at (3,-1.5) {\(\odot\)};

\foreach \x in {1,2}{
	\coordinate (\x)  at (3-0.75+\x*0.5,-2);
	\node at (\x) {\(\bullet\)};
	\draw (3,-1.5) -- (\x);
}

\foreach \x in {3,4}{
	\coordinate (\x)  at (3-1.5+\x*0.5,-2.5);
	\node at (\x) {\(\bullet\)};
	\draw (2) -- (\x);
}

\foreach \x in {5,6}{
	\coordinate (\x)  at (3-2.75+\x*0.5,-3);
	\node at (\x) {\(\bullet\)};
	\draw (3) -- (\x);
}


\foreach \x in {0,1,...,4}{
	\coordinate (\x) at ({3+0.7*cos(-90+3*72+\x*72)},{0.7*sin(-90+3*72+\x*72)});
} 

\foreach \x in {0,1}{
	\node[left] at (\x) {$\x$};
}
\foreach \x in {3,4}{
	\node[right] at (\x) {$\x$};
}
\foreach \x in {2}{
	\node[below] at (\x) {$\x$};
}
\draw[dashed] (0) -- (4);

\foreach \x/\y in {0/1,1/2,2/3,3/4}{
	\draw (\x) -- (\y);
}
\draw (4) -- (1) -- (3);


\node at (6,-1.5) {\(\odot\)};

\foreach \x in {1,2}{
	\coordinate (\x)  at (6-0.75+\x*0.5,-2);
	\node at (\x) {\(\bullet\)};
	\draw (6,-1.5) -- (\x);
}

\foreach \x in {3,4}{
	\coordinate (\x)  at (6-2+\x*0.5,-2.5);
	\node at (\x) {\(\bullet\)};
	\draw (1) -- (\x);
}

\foreach \x in {5,6}{
	\coordinate (\x)  at (6-2.75+\x*0.5,-3);
	\node at (\x) {\(\bullet\)};
	\draw (4) -- (\x);
}


\foreach \x in {0,1,...,4}{
	\coordinate (\x)  at ({6+0.7*cos(-90+3*72+\x*72)},{0.7*sin(-90+3*72+\x*72)});
} 

\foreach \x in {0,1}{
	\node[left] at (\x) {$\x$};
}
\foreach \x in {3,4}{
	\node[right] at (\x) {$\x$};
}
\foreach \x in {2}{
	\node[below] at (\x) {$\x$};
}
\draw[dashed] (0) -- (4);

\foreach \x/\y in {0/1,1/2,2/3,3/4}{
	\draw (\x) -- (\y);
}
\draw (0) -- (3) -- (1);


\node at (9,-1.5) {\(\odot\)};

\foreach \x in {1,2}{
	\coordinate (\x)  at (9-0.75+\x*0.5,-2);
	\node at (\x) {\(\bullet\)};
	\draw (9,-1.5) -- (\x);
}

\foreach \x in {3,4}{
	\coordinate (\x)  at (9-2+\x*0.5,-2.5);
	\node at (\x) {\(\bullet\)};
	\draw (1) -- (\x);
}

\foreach \x in {5,6}{
	\coordinate (\x)  at (9-3.25+\x*0.5,-3);
	\node at (\x) {\(\bullet\)};
	\draw (3) -- (\x);
}


\foreach \x in {0,1,...,4}{
	\coordinate (\x)  at ({9+0.7*cos(-90+3*72+\x*72)},{0.7*sin(-90+3*72+\x*72)});	
} 

\foreach \x in {0,1}{
	\node[left] at (\x) {$\x$};
}
\foreach \x in {3,4}{
	\node[right] at (\x) {$\x$};
}
\foreach \x in {2}{
	\node[below] at (\x) {$\x$};
}
\draw[dashed] (0) -- (4);

\foreach \x/\y in {0/1,1/2,2/3,3/4}{
	\draw (\x) -- (\y);
}
\draw (2) -- (0) -- (3);

\node at (12,-1.5) {\(\odot\)};

\foreach \x in {1,2}{
	\coordinate (\x)  at (12-0.75+\x*0.5,-2);
	\node at (\x) {\(\bullet\)};
	\draw (12,-1.5) -- (\x);
}

\foreach \x in {3,4}{
	\coordinate (\x)  at (12-2.25+\x*0.5,-2.5);
	\node at (\x) {\(\bullet\)};
	\draw (1) -- (\x);
}

\foreach \x in {5,6}{
	\coordinate (\x)  at (12-2.25+\x*0.5,-2.5);
	\node at (\x) {\(\bullet\)};
	\draw (2) -- (\x);
}


\foreach \x in {0,1,...,4}{
	\coordinate (\x)  at ({12+0.7*cos(-90+3*72+\x*72)},{0.7*sin(-90+3*72+\x*72)});	
} 

\foreach \x in {0,1}{
	\node[left] at (\x) {$\x$};
}
\foreach \x in {3,4}{
	\node[right] at (\x) {$\x$};
}
\foreach \x in {2}{
	\node[below] at (\x) {$\x$};
}
\draw[dashed] (0) -- (4);

\foreach \x/\y in {0/1,1/2,2/3,3/4}{
	\draw (\x) -- (\y);
}
\draw (0) -- (2) -- (4);
\end{tikzpicture}

}
\caption{Equivalent classes of Schr\"{o}der trees with four leaves and the corresponding dissections of \(\mathsf{P}_{5}\) }\label{fig:iso_trees}
\end{figure}

\begin{rem}
If a dissection $D$ is a triangulation, then $X_D$ becomes a toric variety of Catalan type in~\cite{LMP_Catalan} and the associated Schr\"{o}der tree $\tau(D)$ is a full binary tree.  The number of isomorphism classes of toric varieties of Catalan type is $s(n+1,n)$, which is known as the Wedderburn-Etherington number $b_{n+1}$.
\end{rem}

Note that not every Fano generalized Bott manifold is associated with a polygon dissection. For instance, the total space of the projective bundle $P(\underline{\C}\oplus\gamma^{\otimes 2})$ over $\C P^3$ is a Fano generalized Bott manifold, but there is no polygon dissection associated with it, where $\gamma$ is the tautological line bundle over $\C P^3$.

\section{Cohomology ring of the toric variety from a polygon dissection}\label{sec:cohomology}

In this section, we describe the cohomology ring of $X_D$ using the Schr\"{o}der tree $\tau(D)$.

Let $v$ be a vertex of $\tau(D)$ with $\phi(v)=\{a,b\}$. Then we denote by $S(v)$ the set of descendants of $v$ whose label is $\{\bullet,b\}$. 
For instance, if $v$ is the vertex labeled by $\{0,7\}$ in Figure~\ref{fig:rooted tree}, then $S(v)=\{u\mid \phi(u)\text{ is }\{3,7\}\text{ or }\{6,7\}\}$. Note that $S(v)$ is empty for a leaf $v$ of $\tau(D)$.

\begin{thm}\label{thm:main}
Given a \(k\)-dissection \(D\) of a polygon, consider the corresponding Schr\"{o}der tree \(\tau(D)\). For \(1\leq i \leq k\), let \(v_i\) be the \(i\)th internal vertex in the preorder listing of \(\tau(D)\). For each \(i\), suppose that \(v_i\) has \(\ell_i\) children \(w_{i1},w_{i2},\dots,w_{i\ell_i}\) from left to right, and \(\phi(w_{i\ell_i})=\{a_i,b_i\}\) with \(a_i<b_i\).  
Then the cohomology ring of \(X_D\) can be written as 
\[
H^*(X_{D})=\mathbb{Z}[x_{a_1b_1},x_{a_2b_2},\dots,x_{a_kb_k}]/\langle p_1,\dots,p_k \rangle,
\]
where 
\[
p_i:=x_{a_ib_i}\prod_{j=1}^{\ell_i-1}
\left(-\sum_{u \in S(w_{ij})}x_{\phi(u)}+\sum_{u \in S(v_i)}x_{\phi(u)}\right).
\]

\end{thm}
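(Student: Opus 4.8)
The plan is to read off $H^*(X_D)$ directly from the fan $\Sigma_D$ via the Danilov--Jurkiewicz presentation (Theorem~\ref{thm:Da-Ju}) and then use the linear relations to eliminate all but $k$ of the degree-two generators. The rays of $\Sigma_D$ are the vectors $\mathbf{u}_{\phi(v)}=\mathbf{e}_{b}-\mathbf{e}_{a}$ (with $\phi(v)=\{a,b\}$) as $v$ ranges over the $n+k$ non-root vertices of $\tau(D)$, so the presentation starts with $n+k$ generators $x_{\phi(v)}$. By the description of cones in the proof of Lemma~\ref{lem:comb_poly}, the minimal non-faces are exactly the sets $E(\mathsf{P}^{(i)})$, i.e.\ the label sets of the children of each internal vertex, so the monomial part of the ideal is generated by $\prod_{j=1}^{\ell_i}x_{\phi(w_{ij})}$, one per internal vertex $v_i$. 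Evaluating the relations of type~(2) at the $s$th standard basis vector ($1\le s\le n$) and using $\mathbf{u}_{ij}=\mathbf{e}_j-\mathbf{e}_i$ turns the linear relations into the conservation laws
\[
\sum_{\phi(u)=\{\ast,s\}} x_{\phi(u)} \;=\; \sum_{\phi(u)=\{s,\ast\}} x_{\phi(u)} \qquad (1\le s\le n),
\]
where the left (resp.\ right) sum runs over vertices whose label has $s$ as its larger (resp.\ smaller) entry.

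Next I would analyze the combinatorics of these laws. The first point is that the vertices with a fixed larger coordinate $s$ form a single rightmost path descending to the leaf labelled $\{s-1,s\}$, and those with a fixed smaller coordinate form a single leftmost path; in particular $S(v)$ is precisely the rightmost path strictly below $v$, and every $u\in S(v)$ is a rightmost child, so $x_{\phi(u)}$ is one of the chosen variables $x_{a_tb_t}$. Writing $\rho(v):=x_{\phi(v)}+\sum_{u\in S(v)}x_{\phi(u)}$ for the sum along the rightmost path from $v$, and $\lambda(v)$ for the analogous leftmost-path sum, the law at the coordinate $c_j$ shared by consecutive siblings $w_{ij}$ and $w_{i,j+1}$ reads exactly $\rho(w_{ij})=\lambda(w_{i,j+1})$, since $w_{ij}$ (resp.\ $w_{i,j+1}$) is the topmost vertex with larger (resp.\ smaller) coordinate $c_j$.

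The crux is then the auxiliary identity $\rho(v)\equiv\lambda(v)$ modulo the linear relations, proved by induction on the size of the subtree at $v$. The leaf case is trivial; for internal $v$ with children $w_1,\dots,w_\ell$ one combines the sibling identities $\rho(w_j)=\lambda(w_{j+1})$ with the inductive hypotheses $\rho(w_j)\equiv\lambda(w_j)$ to form the chain $\lambda(w_1)\equiv\rho(w_1)=\lambda(w_2)\equiv\cdots=\lambda(w_\ell)\equiv\rho(w_\ell)$, whence $\rho(v)=x_{\phi(v)}+\rho(w_\ell)\equiv x_{\phi(v)}+\lambda(w_1)=\lambda(v)$. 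The same chain shows $\rho(w_{i1})\equiv\cdots\equiv\rho(w_{i\ell_i})$, and since $S(v_i)=\{w_{i\ell_i}\}\cup S(w_{i\ell_i})$ gives $\rho(w_{i\ell_i})=\sum_{u\in S(v_i)}x_{\phi(u)}$, we obtain for $j<\ell_i$
\[
x_{\phi(w_{ij})}\equiv \rho(w_{i\ell_i})-\sum_{u\in S(w_{ij})}x_{\phi(u)} = -\sum_{u\in S(w_{ij})}x_{\phi(u)}+\sum_{u\in S(v_i)}x_{\phi(u)},
\]
which is precisely the $j$th factor of $p_i$.

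Finally I would assemble the presentation. The substitution writes every generator $x_{\phi(v)}$ as a $\Z$-linear combination of $x_{a_1b_1},\dots,x_{a_kb_k}$, so these $k$ classes generate the free group $H^2\cong\Z^k$ and hence form a basis; rewriting the monomial relation of $v_i$ then converts $\prod_{j=1}^{\ell_i}x_{\phi(w_{ij})}$ into $x_{a_ib_i}\prod_{j=1}^{\ell_i-1}(\cdots)=p_i$. To rule out extra relations, note that $p_i$ is monic of degree $\ell_i$ in $x_{a_ib_i}$ with coefficients in the variables indexed by descendants of $v_i$, so $\Z[x_{a_1b_1},\dots,x_{a_kb_k}]/\langle p_1,\dots,p_k\rangle$ is $\Z$-free of rank $\prod_i\ell_i$; this equals the number of vertices of $P_D\cong\prod_i\Delta^{\ell_i-1}$, i.e.\ the total Betti number of $X_D$, so the evident surjection onto $H^*(X_D)$ is an isomorphism. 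The main obstacle is the combinatorial heart in the middle two paragraphs---identifying $S(v)$ with rightmost paths and pushing through the $\rho\equiv\lambda$ induction---whereas the reduction to Danilov--Jurkiewicz and the concluding rank count are routine.
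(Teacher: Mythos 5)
Your proposal is correct, and its skeleton coincides with the paper's proof: both start from the Danilov--Jurkiewicz presentation, identify the minimal non-faces (primitive collections) with the sets of children labels $\{\phi(w_{i1}),\dots,\phi(w_{i\ell_i})\}=E(\mathsf{P}^{(i)})$, and eliminate the variables attached to non-rightmost children via the linear relations so that the monomial generators become the $p_i$. The difference lies in how the elimination identity \eqref{eq:linear} is established: the paper telescopes the consecutive linear relations $q_c+q_{c+1}+\cdots+q_{d-1}$, using the fact that $\{a,a+1,\dots,d\}$ and $\{b,b+1,\dots,c\}$ span sub-dissections to cancel interior terms, whereas you package the same conservation laws into the rightmost/leftmost path sums $\rho,\lambda$ and prove $\rho(v)\equiv\lambda(v)$ by induction on the subtree, the sibling identity $\rho(w_{ij})=\lambda(w_{i,j+1})$ being exactly the conservation law at the shared coordinate $c_j$; this is a reorganization of the paper's telescoping rather than a new idea, but it makes the role of $S(v)$ as the rightmost path (hence why only the chosen variables $x_{a_tb_t}$ survive) fully explicit, which the paper handles in one terse sentence. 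You also supply a completeness check that the paper leaves implicit: since each $p_i$ is monic of degree $\ell_i$ in $x_{a_ib_i}$ with remaining variables indexed by strict descendants of $v_i$, the proposed quotient is $\Z$-free of rank $\prod_i\ell_i$, equal to the vertex count of $P_D\cong\prod_i\Delta^{\ell_i-1}$ and hence to the total Betti number of $X_D$, so the evident surjection is an isomorphism; this rank count is routine but genuinely closes the gap of verifying that no further relations arise from the elimination. Incidentally, your Stanley--Reisner generators $\prod_{j=1}^{\ell_i}x_{\phi(w_{ij})}$, without a factor for the distinguished edge, are the correct ones: they match the paper's example computations and the shape of $p_i$, while the displayed generator $x_{i_1i_2}\cdots x_{i_{\ell-1}i_\ell}x_{i_1i_\ell}$ in the paper's proof carries a spurious extra factor $x_{i_1i_\ell}$.
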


\begin{proof}
    Let $D$ be a $k$-dissection of $\mathsf{P}_{n+2}$ and $E(D)=\cup_{i=1}^k E(\mathsf{P}^{(i)})$. Let $E(\mathsf{P}^{(i)})=\{i_1i_2,i_2i_3,\dots,i_{\ell-1}i_\ell\}$, where $i_1<i_2<\dots<i_\ell$. Then there are $k$ generators of first type of $\mathcal{I}$ in Theorem~\ref{thm:Da-Ju}:
    \begin{equation*}
        x_{i_1i_2}\dots x_{i_{\ell-1}i_\ell} x_{i_1i_\ell},
    \end{equation*}
    corresponding to $E(\mathsf{P}^{(i)})$ for $i=1,\dots,k$.
    For each $i\in [n]$, we have the linear relation $q_i=0$, where
    \begin{equation*}
        q_i:=\sum_{\{j,i\}\in E(\mathsf{P}_{n+2})\atop j<i}x_{ji} - \sum_{\{i,j\}\in E(\mathsf{P}_{n+2})\atop i<j}x_{ij},
    \end{equation*}
    which are generators of the second type of $\mathcal{I}$ in Theorem~\ref{thm:Da-Ju}.
    For simplicity, for $e\in E(\mathsf{P}_{n+2})$, let $v_e$ be the vertex in $\tau(D)$ satisfying $\phi(v_e)=e$ and let $r_e$ be the parent of $v_e$. We need to show that for $e\in E(\mathsf{P}_{n+2})$,  if $v_e$ is not the right-most child of $r_e$, then
    \begin{equation}\label{eq:linear}
        x_{e} = -\sum_{u \in S(v_e)}x_{\phi(u)}+\sum_{u \in S(r_e)}x_{\phi(u)},
    \end{equation} 
    and $\phi(u)\in\{a_1b_1,a_2b_2,\dots,a_kb_k\}$ for all $u\in S(v_e) \cup S(r_e)$.
    Suppose that $\phi(v_e)=bc$ and $\phi(r_e)=ad$. By the construction of labeling, it follows that $a\leq b<c \leq d$. In fact, $c<d$ since $v_e$ is not the right-most child of $r_e$. Consider the linear relation $q_c+q_{c+1}+\cdots+q_{d-1}=0$. Since $\{a,a+1,\dots, d\}$ and $\{b,b+1,\dots, c\}$ form sub-dissections,
    \[
    q_c+q_{c+1}+\cdots+q_{d-1}=\sum_{\{i,c\}\in E(\mathsf{P}_{n+2}) \atop b\leq i<c} x_{ic}-\sum_{\{j,d\}\in E(\mathsf{P}_{n+2}) \atop c\leq j<d} x_{jd}
    \]
    so that we have 
    \[
    x_{e}=-\sum_{\{i,c\}\in E(\mathsf{P}_{n+2}) \atop b<i<c} x_{ic}+\sum_{\{j,d\}\in E(\mathsf{P}_{n+2}) \atop c\leq j<d} x_{jd}.
    \]
    Note that if $v$ and $u$ are vertices such that $\phi(v)=i_1j$ and $\phi(u)=i_2j$ with $i_1<i_2<j$, then $u$ is a descendant of $v$, and $u$ is the right-most child of its parent. 
    Thus, for $e\in E(\mathsf{P}_{n+2})$, unless $v_e$ is the right-most child of $r_e$, the element $x_{e}$  can be written as \eqref{eq:linear}, and it is a linear combination of $x_{a_1b_1},\dots, x_{a_kb_k}$. 
\end{proof}

\begin{ex}
Let $D$ be the $4$-dissection of $\mathsf{P}_{10}$ in Figure~\ref{fig:four-dissect}. Then \(\tau(D)\) with the label \(\phi(v)\) for \(v\in V(\tau(D))\) is given in Figure~\ref{fig:rooted tree}. Let us compute the cohomology ring $H^\ast(X_D)$ using Theorem~\ref{thm:main}. Then
\begin{equation*}
    \begin{split}
        &x_{01}=x_{12}=x_{23},\\
        &x_{34}=x_{45}=x_{56}=x_{67},\\
        &x_{03}=-x_{23}+x_{37}+x_{67},\\
        &x_{07}=-x_{37}-x_{67}+x_{89},\\
        &x_{78}=x_{89}.
    \end{split}
\end{equation*}
Hence we get
\begin{equation*}
    \begin{split}
        x_{01}x_{12}x_{23}&=x_{23}^3,\\
        x_{34}x_{45}x_{56}x_{67}&=x_{67}^4,\\
        x_{03}x_{37}&=x_{37}(-x_{23}+x_{37}+x_{67}),\\
        x_{07}x_{78}x_{89}&=x_{89}^2(-x_{37}-x_{67}+x_{89}).
    \end{split}
\end{equation*}
Therefore, the cohomology ring of $X_D$ is
\[
H^*(X_{D})=\mathbb{Z}[x_{23},x_{37},x_{67},x_{89}]/\langle x_{23}^{3},x_{37}(-x_{23}+x_{37}+x_{67}), x_{67}^4, x_{89}^2(-x_{37}-x_{67}+x_{89}) \rangle.
\]
\end{ex}

The following is a direct consequence of Theorem~\ref{thm:main}.

\begin{cor}\label{cor:leaf}
    Let $D$ be a dissection of a polygon and let $\tau(D)$ be its associated Schr\"{o}der tree. For a vertex $w$ with $\phi(w)=\{a_i,b_i\}$ in Theorem~\ref{thm:main}, if the vertex $w$ is a leaf of $\tau(D)$ and $v$ is the parent of $w$, then $x_{a_ib_i}^{\ell_i}=0$ in $H^\ast(X_D)$, where $\ell_i$ is the number of children of $v$.
\end{cor}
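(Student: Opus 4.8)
The plan is to feed the leaf hypothesis directly into the relation $p_i$ produced by Theorem~\ref{thm:main} and to watch the product defining $p_i$ degenerate to a single power of $x_{a_ib_i}$. Write $v=v_i$ for the parent and $w=w_{i\ell_i}$ for its rightmost child, so that $\phi(w)=\{a_i,b_i\}$ and $v$ has the children $w_{i1},\dots,w_{i\ell_i}$. Since $p_i$ belongs to the defining ideal of $H^\ast(X_D)$, it suffices to show that under the hypothesis $p_i=x_{a_ib_i}^{\ell_i}$; the vanishing $x_{a_ib_i}^{\ell_i}=0$ is then immediate.

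To do this I would evaluate the two descendant-sums occurring in each factor
\[
-\sum_{u\in S(w_{ij})}x_{\phi(u)}+\sum_{u\in S(v)}x_{\phi(u)}.
\]
For the second sum, the key observation is that the only descendant of $v$ whose label ends in $b_i$ is $w$ itself. Indeed, by the nesting remark in the proof of Theorem~\ref{thm:main}, any vertex labeled $\{\bullet,b_i\}$ lying below $v$ must sit on the branch through the rightmost child $w$, and $w$, being a leaf, has no proper descendants; hence $S(v)=\{w\}$ and $\sum_{u\in S(v)}x_{\phi(u)}=x_{a_ib_i}$. For the first sum I would invoke the elementary fact, recorded just after the definition of $S$, that $S(u)=\emptyset$ for every leaf $u$; applied to the children this yields $\sum_{u\in S(w_{ij})}x_{\phi(u)}=0$. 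Each factor then collapses to exactly $x_{a_ib_i}$, so that $p_i=x_{a_ib_i}\cdot x_{a_ib_i}^{\ell_i-1}=x_{a_ib_i}^{\ell_i}$ and $p_i=0$ delivers the claim.

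The step I expect to be the main obstacle is precisely the vanishing of the correction terms $\sum_{u\in S(w_{ij})}x_{\phi(u)}$ attached to the non-rightmost children: this is where the leaf structure is genuinely used. It is because the children of $v$ carry no proper descendants that each $S(w_{ij})$ is empty and each factor equals the full $x_{a_ib_i}$ rather than $x_{a_ib_i}$ minus a lower-degree term. Were some sibling an internal vertex, its set $S(w_{ij})$ would be nonempty and the corresponding factor would pick up extra summands (as happens, e.g., for the root vertex $09$ in the running example, where the relation reads $x_{89}^2(x_{89}-x_{37}-x_{67})$ rather than $x_{89}^3$), so the reduction to a pure power rests on \emph{all} children of $v$ being leaves; I would make sure to verify this — and not merely that the rightmost child $w$ is a leaf — before asserting $p_i=x_{a_ib_i}^{\ell_i}$.
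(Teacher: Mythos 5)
Your computation is exactly the intended ``direct consequence'': the paper supplies no separate argument for Corollary~\ref{cor:leaf}, and the verification it has in mind is the one you give --- the nesting property recorded in the proof of Theorem~\ref{thm:main} forces $S(v)=\{w\}$ once the rightmost child $w$ is a leaf, and the vanishing of every $S(w_{ij})$ collapses each factor of $p_i$ to $x_{a_ib_i}$, so $p_i=x_{a_ib_i}^{\ell_i}$. But the reservation in your last paragraph is not mere caution: it detects a genuine error in the statement as written. Under the literal hypothesis (only the rightmost child is a leaf) the conclusion actually fails, and it fails in the paper's own running example, not just at the level of the shape of $p_i$. For the $4$-dissection of $\mathsf{P}_{10}$ in Figure~\ref{fig:four-dissect}, the root has $\ell=3$ children and its rightmost child $89$ is a leaf, yet $x_{89}^3\neq 0$ in $H^\ast(X_D)=\Z[x_{23},x_{37},x_{67},x_{89}]/\mathcal{I}$: the degree-three graded piece of $\mathcal{I}$ is spanned by $x_{23}^3$, by $x_{89}^2(x_{89}-x_{37}-x_{67})$, and by linear multiples of $x_{37}(-x_{23}+x_{37}+x_{67})$; any expression of $x_{89}^3$ must use the second generator with coefficient $1$, and the leftover monomial $x_{67}x_{89}^2$, which is divisible by neither $x_{37}$ nor $x_{23}$, cannot be cancelled by the remaining generators. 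So the element $x_{89}^3$ genuinely does not vanish, and your worry is a disproof of the stated corollary, not only of one route to it.

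Consequently your proof is correct precisely for the strengthened hypothesis you identified: all $\ell_i$ children of $v$ must be leaves. The leaf condition on $w$ alone only yields $S(v)=\{w\}$, leaving $p_i=x_{a_ib_i}\prod_{j=1}^{\ell_i-1}\bigl(x_{a_ib_i}-\sum_{u\in S(w_{ij})}x_{\phi(u)}\bigr)$, which is a pure power exactly when each non-rightmost sibling is also a leaf. Note that the defect propagates: the set $L(D)$ in Section~\ref{sec:further} is defined by the rightmost-leaf condition only, and the equal-out-degree setting there does not repair it --- for the $2$-dissection of $\mathsf{P}_6$ by the diagonal $\{0,3\}$, both internal vertices have out-degree $3$, the rightmost child $45$ of the root is a leaf, and yet in $\Z[x_{23},x_{45}]/\langle x_{23}^3,\, x_{45}^2(x_{45}-x_{23})\rangle$ one checks as above that $x_{45}^3\neq 0$. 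So the corollary, and with it the definition of $L(D)$ used in Proposition~\ref{prop:further}, should be restated for internal vertices all of whose children are leaves; with that emendation your argument is complete and is the same argument the paper tacitly relies on.
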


Note that the combinatorial type of the associated lattice polytope with a generalized Bott manifold is determined by the cohomology ring. That is, for two generalized Bott manifolds $B$ and $B'$, if the lattice polytope associated with $B$ is combinatorially equivalent to $\prod_{i=1}^m \Delta^{n_i}$ and $H^\ast(B)\cong H^\ast(B')$, then the lattice polytope associated with $B'$ is also combinatorially equivalent to $\prod_{i=1}^m \Delta^{n_i}$. See \cite[Theorem 5.3]{Choi-Panov-Suh} for more details. 
Therefore, for two dissections $D$ and $\widetilde{D}$, if $H^\ast(X_D)\cong H^\ast(X_{\widetilde{D}})$, then $P_D$ and $P_{\widetilde{D}}$ are combinatorially equivalent, so we get the following.

\begin{cor}\label{cor:cohomology-size}
    Let $D$ and $\widetilde{D}$ be $k$-dissection and $\tilde{k}$-dissection, respectively, if $H^\ast(X_D)\cong H^\ast(X_{\widetilde{D}})$, then $k=\tilde{k}$ and the sets $\{|E(\mathsf{P}^{(i)})|\mid i=1,\dots,k\}$ and $\{|E(\widetilde{\mathsf{P}}^{(i)})|\mid i=1,\dots,\tilde{k}\}$ are the same as multi-sets.
\end{cor}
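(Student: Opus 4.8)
The plan is to combine the paragraph immediately preceding the statement with the fact that the multiset of simplex factors of a product of simplices is a combinatorial invariant of the face lattice. First, from the hypothesis $H^\ast(X_D)\cong H^\ast(X_{\widetilde{D}})$, the discussion right before the corollary (resting on \cite[Theorem~5.3]{Choi-Panov-Suh}) already gives that the lattice polytopes $P_D$ and $P_{\widetilde{D}}$ are combinatorially equivalent. By Lemma~\ref{lem:comb_poly}, $P_D$ is combinatorially equivalent to $\prod_{i=1}^k \Delta^{|E(\mathsf{P}^{(i)})|-1}$ and $P_{\widetilde{D}}$ to $\prod_{i=1}^{\tilde{k}} \Delta^{|E(\widetilde{\mathsf{P}}^{(i)})|-1}$, where each exponent is at least $1$. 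Hence it suffices to prove that the number of factors and the multiset of their dimensions are determined by the face lattice of a product of simplices.

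For that middle step I would recover this data from the minimal non-faces. The face lattice determines the set of facets together with, for every subcollection of facets, whether those facets share a common face. In $P=\prod_{i=1}^k \Delta^{n_i}$ each facet restricts exactly one coordinate factor, and the intersection of a family of facets is the product, over all factors, of the intersection of the chosen facets inside that factor; this product is empty precisely when, for some single factor $\Delta^{n_i}$, all $n_i+1$ of its facets are chosen. Consequently the minimal collections of facets with empty common intersection are exactly the full facet-sets of the individual simplex factors: they partition the facet set of $P$, there are exactly $k$ of them, and the one coming from the $i$th factor has $n_i+1=|E(\mathsf{P}^{(i)})|$ elements. These are nothing but the primitive collections $E(\mathsf{P}^{(i)})$ identified in the proof of Theorem~\ref{thm:Fano}, now read off purely combinatorially.

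Since a combinatorial equivalence $P_D\cong P_{\widetilde{D}}$ preserves the face lattice, it preserves the family of minimal non-faces, hence their number and their individual cardinalities. Reading this off on both sides yields $k=\tilde{k}$ and the equality of $\{|E(\mathsf{P}^{(i)})|\mid i=1,\dots,k\}$ and $\{|E(\widetilde{\mathsf{P}}^{(i)})|\mid i=1,\dots,\tilde{k}\}$ as multisets. The only genuinely nontrivial point is the uniqueness of the simplex-factor decomposition, i.e. the claim that minimal empty-intersection families of facets coincide with the factor facet-sets; everything surrounding it is bookkeeping, and the passage from cohomology to combinatorial equivalence is supplied by the cited results.
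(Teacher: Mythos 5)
Your proposal is correct and follows essentially the same route as the paper, which also passes from the ring isomorphism to combinatorial equivalence of $P_D$ and $P_{\widetilde{D}}$ via \cite[Theorem~5.3]{Choi-Panov-Suh} and Lemma~\ref{lem:comb_poly} and then reads off the factor data. The only difference is that the paper leaves the uniqueness of the simplex-factor decomposition implicit, whereas you justify it explicitly (and correctly) by identifying the minimal non-faces of $\prod_{i=1}^k \Delta^{n_i}$ with the full facet-sets of the factors, matching the primitive collections $E(\mathsf{P}^{(i)})$ from the proof of Theorem~\ref{thm:Fano}.
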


\section{Proof of Theorem~\ref{thm1}}\label{sec:proof}

In this section, we prove Theorem~\ref{thm1}. When $k=n$, it is proved in~\cite{LMP_Catalan}. If $k\leq 2$, then it is clear from Theorem~\ref{thm:Fano} and Corollary~\ref{cor:cohomology-size}. We prove when $k=3$ by dividing three cases. We first divide the three-dissections into two types and show that a cohomology ring determines the type of a three-dissection. After that, we show that for each type, toric varieties associated with three-dissections are classified by cohomology rings up to variety isomorphism.

Let $D$ be a three-dissection of a polygon and let \(m_1,m_2\), and \(m_3\) be the out-degrees of the internal vertices of the Schr\"{o}der tree \(\tau(D)\). It suffices to consider the two types of Sch\"{o}der trees with three internal vertices as in Figure~\ref{fig:Ds0}. Note that  \(m_i\geq 2\) for all \(i=1,2,3\) since \(\tau(D)\) is a Schr\"{o}der tree.
By abuse of notation, we use the same letter $D$ for the Schr\"{o}der tree $\tau(D)$ and we say that \(D\) is of the first (respectively, second) type if its corresponding Schr\"{o}der path is of the first (respectively, second) type.

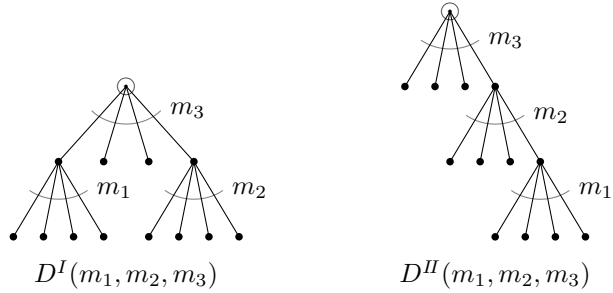
\begin{figure}[ht!]
\centering
\begin{tikzpicture}[scale=1]

\node at (0,0) {$\odot$};

\draw[color=gray,domain=-140:-40] plot ({0.6*cos(\x)}, {0.5*sin(\x)});
\draw[color=gray,domain=-130:-50] plot ({-0.9+0.6*cos(\x)}, {-1+0.5*sin(\x)});
\draw[color=gray,domain=-130:-50] plot ({0.9+0.6*cos(\x)}, {-1+0.5*sin(\x)});

\foreach \i in {-0.9, -0.3, 0.3, 0.9}{
\draw 
(0,0)--(\i,-1);
\fill (\i,-1) circle (0.5mm);
}

\foreach \i in {-0.6, -0.2, 0.2, 0.6}{
\draw 
(-0.9,-1)--(-0.9+\i,-2)
(0.9,-1)--(0.9+\i,-2);
\fill 
(-0.9+\i,-2) circle (0.5mm)
(0.9+\i,-2) circle (0.5mm);
}

\node at (0,-2.5) {\(D^I(m_1,m_2,m_3)\)};
\node[right] at ({0.6*cos(-40)}, {0.5*sin(-40)}) {\(m_3\)};
\node[right] at ({-0.9+0.6*cos(-50)}, {-1+0.5*sin(-50)}) {\(m_1\)};
\node[right] at ({0.9+0.6*cos(-50)}, {-1+0.5*sin(-50)}) {\(m_2\)};
\end{tikzpicture}
\qquad\qquad
\begin{tikzpicture}[scale=1]

\node at (0,0) {$\odot$};

\draw[color=gray,domain=-130:-50] plot ({0.6*cos(\x)}, {0.5*sin(\x)});
\draw[color=gray,domain=-130:-50] plot ({0.6+0.6*cos(\x)}, {-1+0.5*sin(\x)});
\draw[color=gray,domain=-130:-50] plot ({1.2+0.6*cos(\x)}, {-2+0.5*sin(\x)});

\foreach \i in {-0.6, -0.2, 0.2, 0.6}{
\draw 
(0,0)--(\i,-1)
(0.6,-1)--(0.6+\i,-2)
(1.2,-2)--(1.2+\i,-3);
\fill (\i,-1) circle (0.5mm);
\fill (0.6+\i,-2) circle (0.5mm);
\fill (1.2+\i,-3) circle (0.5mm);
}

\node at (0.6,-3.5) {\(D^{I\!I}(m_1,m_2,m_3)\)};
\node[right] at ({0.6*cos(-50)}, {0.5*sin(-50)}) {\(m_3\)};
\node[right] at ({0.6+0.6*cos(-50)}, {-1+0.5*sin(-50)}) {\(m_2\)};
\node[right] at ({1.2+0.6*cos(-50)}, {-2+0.5*sin(-50)}) {\(m_1\)};
\end{tikzpicture}

\caption{Two types of Schr\"oder trees \(D^I\) and \(D^{I\!I}\).}\label{fig:Ds0}
\end{figure}

For a three dissection of each type, we let
\begin{align*}
\mathcal{I}_{D^I}&=\langle x_1^{m_1}, x_2^{m_2}, x_3(-x_1+x_2+x_3)(x_2+x_3)^{m_3-2} \rangle,\\    
\mathcal{I}_{D^{I\!I}}&=\langle x_1^{m_1}, x_2(x_1+x_2)^{m_2-1}, x_3(x_1+x_2+x_3)^{m_3-1} \rangle.
\end{align*} Then from Theorem~\ref{thm:main}, we have
\begin{equation*}
    H^\ast(X_{D^{I}})=\mathbb{Z}[x_1,x_2,x_3]/\mathcal{I}_{D^I},\quad\text{and }\quad  
H^*(X_{D^{I\!I}})=\mathbb{Z}[x_1,x_2,x_3]/\mathcal{I}_{D^{I\!I}}.
\end{equation*}

Let $D$ and $\widetilde{D}$ be three-dissections of a polygon. 
For \(i=1,2,3\), let \(m_i\) (respectively, \( \widetilde{m}_i\)) denote the out-degrees of the internal vertices in \(D\) (respectively, \(\widetilde{D}\)). For \(m=:\min\{m_1,m_2,m_3\}\), let \(n(D)\) denote the number of independent linear terms \(\alpha\) in \(H^*(X_{D})\) such that \(\alpha^m=0\).
Note that if \(H^*(X_D)\cong H^*(X_{\widetilde{D}})\), then \(n(D)=n(\widetilde{D})\), and the multi-sets \( \{m_1,m_2,m_3\}\) and \( \{\widetilde{m}_1,\widetilde{m}_2, \widetilde{m}_3\} \) are the same by Corollary~\ref{cor:cohomology-size}.

We first prove that if $H^\ast(X_D)\cong H^\ast(X_{\widetilde{D}})$, then $D$ and $\widetilde{D}$ are of the same type.
To prove this, we prepare three lemmas about the cohomology of the toric varieties associated with the second type three-dissections.

\begin{lem}\label{lem:type2-coho-1}
    Let \(D=D^{I\!I}(m_1,m_2,m_3)\) and let $m:=\min\{{m}_2,{m}_3\}$. If $m<{m_1}$, then there is no nonzero element in $H^2(X_D)$ whose $m$th power vanishes in $H^\ast(X_D)$.
\end{lem}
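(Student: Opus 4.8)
The plan is to pass to rational coefficients, which loses nothing since $H^\ast(X_D)$ is free abelian, so an integral class vanishes if and only if it vanishes over $\mathbb{Q}$; this also lets me rescale $\alpha$ and divide by the integers $m_i$. Setting $y_1=x_1$, $y_2=x_1+x_2$, $y_3=x_1+x_2+x_3$ (a unimodular change of basis), the ideal $\mathcal{I}_{D^{I\!I}}$ becomes $\langle y_1^{m_1},\,y_2^{m_2}-y_1y_2^{m_2-1},\,y_3^{m_3}-y_2y_3^{m_3-1}\rangle$, so $R:=H^\ast(X_D;\mathbb{Q})$ is an iterated monic extension: with $R_1=\mathbb{Q}[y_1]/(y_1^{m_1})$ and $R_2=R_1[y_2]/(y_2^{m_2}-y_1y_2^{m_2-1})$ one has $R=R_2[y_3]/(y_3^{m_3}-y_2y_3^{m_3-1})$. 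Because each relation is monic in the newly adjoined variable, $R$ is a free $R_2$-module with basis $1,y_3,\dots,y_3^{m_3-1}$, and iterating, $R$ has the box basis $\{y_1^ay_2^by_3^c:0\le a<m_1,\ 0\le b<m_2,\ 0\le c<m_3\}$. I will detect $\alpha^m\neq 0$ by projecting it onto a single well-chosen basis vector, reading off coefficients in the free $R_2$-module structure.

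Write a nonzero $\alpha\in H^2(X_D)$ as $\alpha=py_1+qy_2+ry_3$. First suppose $r\neq 0$, and put $\beta=py_1+qy_2\in R_2$, so $\alpha^{m}=\sum_{j=0}^{m}\binom{m}{j}\beta^{m-j}r^{j}y_3^{j}$, the only non-standard term being $y_3^{m_3}=y_2y_3^{m_3-1}$ when $m=m_3$. If $m<m_3$, the coefficient of the standard monomial $y_3^{m}$ is $r^{m}\neq 0$. If $m=m_3$, reducing the top term shows the coefficient of $y_3^{m_3-1}$ equals $r^{m_3-1}\bigl(m_3p\,y_1+(m_3q+r)\,y_2\bigr)\in R_2$, which is nonzero unless $p=0$ and $r=-m_3q$; in that single \emph{resonance} case I instead read off the coefficient of $y_3^{m_3-2}$, namely $\binom{m_3}{2}(-m_3)^{m_3-2}q^{m_3}\,y_2^{2}$, which is nonzero because $y_2^{2}\neq 0$ in $R_2$ (directly if $m_2\ge 3$, and as $y_1y_2$ if $m_2=2$). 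Next suppose $r=0$, $q\neq 0$, so $\alpha\in R_2$ and $m\le m_2$: if $m<m_2$ the coefficient of $y_2^{m}$ is $q^{m}\neq 0$, while if $m=m_2$ the coefficient of $y_1^{m}$ in $\alpha^{m}$ is $p^{m}$, and when $p=0$ one has $\alpha^{m}=q^{m}y_1y_2^{m_2-1}\neq 0$. Finally, if $q=r=0$ then $\alpha=py_1$ with $p\neq 0$ and $\alpha^{m}=p^{m}y_1^{m}\neq 0$, using $m<m_1$.

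The step I expect to be the main obstacle is the resonance case $r\neq 0$, $m=m_3\le m_2$, $p=0$, $r=-m_3q$: there the leading coefficient of $y_3^{m_3-1}$ cancels and one is forced to descend to the next coefficient, where the sub-split $m_2\ge 3$ versus $m_2=2$ appears and the constraint $m_i\ge 2$ (from $\tau(D)$ being a Schr\"oder tree) is used. Conceptually, the hypothesis $m<m_1$ is exactly what makes every case succeed: whenever a relation overflows, turning $y_2^{m_2}$ into $y_1y_2^{m_2-1}$ or $y_3^{m_3}$ into $y_2y_3^{m_3-1}$, the inequality keeps the resulting $y_1$-exponent below $m_1$, so the overflow lands on a nonzero standard monomial rather than being annihilated by $y_1^{m_1}=0$. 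If instead $m\ge m_1$, the form $\alpha=y_1$ would already satisfy $\alpha^{m}=0$, so the inequality cannot be dropped.
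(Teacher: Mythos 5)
Your proof is correct, but it runs along a genuinely different track from the paper's. The paper argues by contradiction with a short coefficient comparison in the ambient polynomial ring: since $m<m_1$, a homogeneous degree-$m$ element of the ideal can only involve the generators $x_2(x_1+x_2)^{m_2-1}$ and $x_3(x_1+x_2+x_3)^{m_3-1}$ with constant coefficients $\alpha,\beta$, so $(px_1+qx_2+rx_3)^m=\alpha\,x_2(x_1+x_2)^{m_2-1}+\beta\,x_3(x_1+x_2+x_3)^{m_3-1}$ as polynomials; comparing coefficients of $x_1^m$ forces $p=0$, then $m_2,m_3\ge 2$ forces $\alpha=\beta=0$ (e.g.\ via the coefficients of $x_1x_2^{m_2-1}$ and $x_1x_3^{m_3-1}$), whence $q=r=0$, a contradiction. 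You instead prove directly that $\alpha^m\neq 0$ for \emph{every} nonzero $\alpha$: the triangular substitution $y_1=x_1$, $y_2=x_1+x_2$, $y_3=x_1+x_2+x_3$ makes each relation monic in the newly adjoined variable, giving the box basis $\{y_1^ay_2^by_3^c : a<m_1,\,b<m_2,\,c<m_3\}$, and you exhibit a surviving basis coefficient case by case, with the real work in your resonance case $p=0$, $r=-m_3q$, where you correctly descend to the $y_3^{m_3-2}$ coefficient $\binom{m_3}{2}(-m_3)^{m_3-2}q^{m_3}y_2^2$ and split $m_2\ge 3$ (basis monomial) from $m_2=2$ (where $y_2^2=y_1y_2\neq 0$). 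All your computations check out---the transformed ideal, the coefficient $r^{m_3-1}\bigl(m_3p\,y_1+(m_3q+r)\,y_2\bigr)$ of $y_3^{m_3-1}$, the three subcases with $r=0$---and the passage to $\mathbb{Q}$ is legitimate since $H^\ast(X_D)$ is torsion-free. What your route buys: the normal-form basis makes transparent exactly where $m<m_1$ enters (each overflow $y_2^{m_2}\mapsto y_1y_2^{m_2-1}$, $y_3^{m_3}\mapsto y_2y_3^{m_3-1}$ lands on a basis monomial with $y_1$-exponent still below $m_1$), it records sharpness of the hypothesis via $\alpha=y_1$, and it would scale to $D^{I\!I}$-chains with more than three stages; what the paper's buys is brevity---one lift to a polynomial identity and two coefficient comparisons, with no case analysis.
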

\begin{proof}
    Suppose that there exists a nonzero element $px_1+qx_2+rx_3$ in $H^2(X_{{D}})$ such that $(px_1+qx_2+rx_3)^{m}=0$ in $H^\ast(X_{{D}})$. Since $m<{m}_1$, we have
    \begin{equation}\label{eq:lem1}
        (px_1+qx_2+rx_3)^{m}=\alpha x_2(x_1+x_2)^{{m}_2-1} + \beta x_3(x_1+x_2+x_3)^{{m}_3-1}
    \end{equation}
    as polynomials. In any case of $m={m}_2$ and $m={m}_3$, comparing the coefficients of $x_1^{m}$ in both sides of \eqref{eq:lem1}, we get $p=0$. Since \(m_2,m_3\geq 2\) it follows that $\alpha=\beta=0$, which is a contradiction to $px_1+qx_2+rx_3\neq 0$.
\end{proof}

\begin{lem}\label{lem:type2-coho-2}
    Let \(D=D^{I\!I}(m_1,m_2,m_3)\). If $m_1=2$, then $\{(m_2-1)x_1+m_2x_2\}^{m_2}=0$ in $H^\ast(X_D)$.
\end{lem}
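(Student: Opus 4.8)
The plan is to exploit the hypothesis $m_1=2$, which forces $x_1$ to be square-zero in the cohomology ring: since $x_1^{m_1}=x_1^2$ is the first generator of $\mathcal{I}_{D^{I\!I}}$, we have $x_1^2=0$ in $H^*(X_D)$. Consequently every binomial expansion in $x_1$ truncates after its linear term, and this single observation collapses both the quantity to be killed and the relevant defining relation to the same expression.

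Concretely, I would first expand $\bigl((m_2-1)x_1+m_2x_2\bigr)^{m_2}$ by the binomial theorem. Because $x_1^k=0$ for $k\ge 2$, only the $k=0$ and $k=1$ terms survive, giving
\[
\bigl((m_2-1)x_1+m_2x_2\bigr)^{m_2}=m_2^{m_2}x_2^{m_2}+m_2^{m_2}(m_2-1)\,x_1x_2^{m_2-1}=m_2^{m_2}\,x_2^{m_2-1}\bigl(x_2+(m_2-1)x_1\bigr),
\]
after collecting the surviving coefficients $\binom{m_2}{0}m_2^{m_2}$ and $\binom{m_2}{1}(m_2-1)m_2^{m_2-1}$.

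Next I would apply the same truncation to the second generator of $\mathcal{I}_{D^{I\!I}}$. Expanding $(x_1+x_2)^{m_2-1}$ and discarding all terms divisible by $x_1^2$ yields
\[
x_2(x_1+x_2)^{m_2-1}=x_2^{m_2}+(m_2-1)\,x_1x_2^{m_2-1}=x_2^{m_2-1}\bigl(x_2+(m_2-1)x_1\bigr).
\]
Comparing the two displays shows that $\bigl((m_2-1)x_1+m_2x_2\bigr)^{m_2}=m_2^{m_2}\cdot x_2(x_1+x_2)^{m_2-1}$ in $H^*(X_D)$; since the right-hand factor is a generator of $\mathcal{I}_{D^{I\!I}}$ and hence vanishes in the quotient, the left-hand side is zero, which is exactly the claim.

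There is no serious obstacle here: the only thing to recognize is that the hypothesis $m_1=2$ is precisely what makes $x_1$ square-zero, and that the particular linear combination $(m_2-1)x_1+m_2x_2$ is engineered so that its $m_2$th power is a scalar multiple of the defining relation $x_2(x_1+x_2)^{m_2-1}$. The remaining bookkeeping of binomial coefficients is routine.
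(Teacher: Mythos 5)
Your proof is correct and takes essentially the same route as the paper: both rest on the polynomial identity $\{(m_2-1)x_1+m_2x_2\}^{m_2}=\alpha x_1^2+m_2^{m_2}\,x_2(x_1+x_2)^{m_2-1}$, which you obtain by truncating the binomial expansions modulo $x_1^2$ (using $m_1=2$) and the paper obtains by matching the coefficients of the only monomials not divisible by $x_1^2$, namely $x_2^{m_2}$ and $x_1x_2^{m_2-1}$. Your additional factorization through $x_2^{m_2-1}\bigl(x_2+(m_2-1)x_1\bigr)$ is just explicit bookkeeping of the same computation.
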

\begin{proof}
    Since the coefficient of $x_1x_2^{m_2-1}$ in $\{(m_2-1)x_1+m_2x_2\}^{m_2}$ is 
    $$ m_2(m_2-1)m_2^{m_2-1}=m_2^{m_2}(m_2-1),$$ 
    there exists a polynomial $\alpha$ such that $$\{(m_2-1)x_1+m_2x_2\}^{m_2}=\alpha x_1^2+m_2^{m_2}x_2(x_1+x_2)^{m_2-1}.$$
    This proves the lemma.
\end{proof}

\begin{lem}\label{lem:type2-coho-3}
    Let \(D=D^{I\!I}(m_1,m_2,m_3)\). If $m_1>2$, then there is no element $px_1+qx_2+rx_3 \in H^2(X_D)$ with $(q,r)\neq (0,0)$ such that  $(px_1+qx_2+rx_3)^{m_2}$ or $(px_1+qx_2+rx_3)^{m_3}$ vanishes in $H^\ast(X_D)$.
\end{lem}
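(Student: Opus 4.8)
The plan is to argue by contradiction entirely at the level of polynomials, using the standard-monomial description of $H^\ast(X_D)$. Writing $\alpha=px_1+qx_2+rx_3$, the hypothesis $\alpha^{N}=0$ for $N\in\{m_2,m_3\}$ means $\alpha^{N}\in\mathcal{I}_{D^{I\!I}}$, so there are homogeneous $A,B,C$ with
\[
\alpha^{N}=A\,x_1^{m_1}+B\,x_2(x_1+x_2)^{m_2-1}+C\,x_3(x_1+x_2+x_3)^{m_3-1},\qquad \deg A=N-m_1,\ \deg B=N-m_2,\ \deg C=N-m_3,
\]
where a summand is absent once its degree is negative. First I would pass to the triangular coordinates $y_1=x_1$, $y_2=x_1+x_2$, $y_3=x_1+x_2+x_3$, in which the three relations become $y_1^{m_1}$, $y_2^{m_2}-y_1y_2^{m_2-1}$ and $y_3^{m_3}-y_2y_3^{m_3-1}$. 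These have pairwise coprime leading terms $y_1^{m_1},y_2^{m_2},y_3^{m_3}$ for the lexicographic order $y_3>y_2>y_1$, hence form a Gr\"obner basis; consequently the monomials $y_1^{i}y_2^{j}y_3^{l}$ with $0\le i<m_1$, $0\le j<m_2$, $0\le l<m_3$ are a $\mathbb{Z}$-basis, and an element vanishes in $H^\ast(X_D)$ if and only if its normal form is zero. In these coordinates $\alpha=ay_1+by_2+cy_3$ with $a=p-q$, $b=q-r$, $c=r$, and the condition $(q,r)\neq(0,0)$ becomes $(b,c)\neq(0,0)$.

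The decisive point is that $m_1>2$ keeps $y_1^{2}$ in the basis. When $\alpha$ involves only $y_1,y_2$ (that is $c=0$, so $b\neq0$) and $N=m_2$, the factor $C$ is absent, $B$ is a constant, and every monomial of $A\,y_1^{m_1}$ is divisible by $y_1^{m_1}$ with $m_1\ge 3$. Comparing the coefficients of $y_2^{m_2}$, of $y_1y_2^{m_2-1}$, and of $y_1^{2}y_2^{m_2-2}$ in the polynomial identity then yields $B=b^{m_2}$, the relation $m_2a+b=0$, and finally $\binom{m_2}{2}a^{2}b^{m_2-2}=0$; since $b\neq0$ forces $a=-b/m_2\neq0$ while $\binom{m_2}{2}\neq0$, these are incompatible. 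This is exactly the step that collapses when $m_1=2$ (consistent with Lemma~\ref{lem:type2-coho-2}), so the hypothesis $m_1>2$ enters here in an essential way, and it is what guarantees the availability of the third monomial $y_1^{2}y_2^{m_2-2}$.

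To organize the remaining cases I would split on the relative size of $m_2$ and $m_3$, since this governs which of $A,B,C$ survive and hence which reductions occur. When $N=m_2<m_3$ the summand $C$ disappears, and comparing the coefficient of $x_3^{m_2}$ (equivalently $y_3^{m_2}$) immediately forces $r=0$, after which the two-variable argument of the previous paragraph applies. The symmetric analysis treats $N=m_3$ together with the subcases according to whether $m_2\le m_3$ or $m_2>m_3$: there I would first pin down the $y_3$-component of $\alpha$ by reading the coefficients of $y_1y_3^{m_3-1}$ and $y_2y_3^{m_3-1}$ (these force $a=0$ and $m_3b+c=0$ once $c\neq0$), and then exhibit a single standard monomial, such as $y_1y_2^{m_2-1}y_3^{m_3-m_2}$ or $y_2^{m_3}$, whose coefficient is a nonzero integer multiple of a power of the surviving parameter, contradicting vanishing of the normal form.

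The main obstacle is the coupling introduced by the top relation $y_3^{m_3}-y_2y_3^{m_3-1}$. Unlike in Lemmas~\ref{lem:type2-coho-1} and~\ref{lem:type2-coho-2}, in the $\alpha^{m_3}$ situation the factor $C$ is only a constant while $A$ and $B$ may have positive degree, so several distinct reductions can feed into the same standard monomial and one must verify that no unexpected cancellation occurs; this is why the computation is best controlled by tracking how $y_2^{j}$ reduces to $y_1^{\,j-m_2+1}y_2^{m_2-1}$ and choosing monomials that receive a contribution from exactly one source. The genuinely delicate regime is when $\alpha$ carries no $y_3$-component, for there $\alpha$ lives in the two-stage subring generated by $y_1,y_2$, the set of usable reductions is smallest, and the whole argument must rest on the $m_1>2$ mechanism isolated above.
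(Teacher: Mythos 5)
Your triangular change of coordinates and your handling of the $N=m_2$ case are correct and are in substance the paper's own computation in disguise: with $a=p-q$, $b=q-r$, your equations $m_2a+b=0$ and $\binom{m_2}{2}a^2b^{m_2-2}=0$ are exactly the paper's $m_2p=q(m_2-1)$ and $m_2p^2=q^2(m_2-2)$, and the Gr\"obner/normal-form justification (the three relations are monic with pairwise coprime leading monomials) is a clean way to legitimize the coefficient comparisons. One repairable defect in your case organization: you treat $N=m_2$ only when $m_2<m_3$, and your device for forcing $r=0$ (the coefficient of $y_3^{m_2}$) breaks when $m_2\ge m_3$, since then $C$ survives and contributes to that coefficient. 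The paper avoids this by never forcing $r=0$ at all: the three compared monomials are free of $x_3$ and have $x_1$-degree at most $2<m_1$, so neither the $A$- nor the $C$-summand touches them, and the argument runs uniformly in the relative size of $m_2,m_3$ (with the subcase $q=0$ disposed of separately).

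The genuine gap is precisely the regime you flag as ``delicate'' and then claim rests on the $m_1>2$ mechanism: $N=m_3$ with $c=0$ (i.e.\ $r=0$, $q\neq0$). There the identity reads $\alpha^{m_3}=A\,y_1^{m_1}+B\,(y_2^{m_2}-y_1y_2^{m_2-1})$ with $\deg B=m_3-m_2$ possibly positive, so $B$ is no longer a constant; the coefficients of $y_2^{m_3}$, $y_1y_2^{m_3-1}$, $y_1^2y_2^{m_3-2}$ now involve unknown coefficients of $B$ and pin down nothing, and $m_1>2$ buys no third equation. Worse, your own normal-form calculus shows that no contradiction exists to be derived: since $y_2^{m_2}\equiv y_1y_2^{m_2-1}$, one gets $y_2^{m_1+m_2-1}\equiv y_1^{m_1}y_2^{m_2-1}\equiv 0$, and more generally every monomial of $(ay_1+by_2)^{m_3}$ reduces to zero whenever $m_3\ge m_1+m_2-1$. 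Concretely, for $(m_1,m_2,m_3)=(3,2,4)$, modulo $\langle x_1^3,\,x_2(x_1+x_2),\,x_3(x_1+x_2+x_3)^3\rangle$ one has $(x_1+x_2)^2\equiv x_1(x_1+x_2)$, hence $(x_1+x_2)^4\equiv x_1^3(x_1+x_2)\equiv 0$, although $(q,r)=(1,0)\neq(0,0)$ and $m_1=3>2$. So the lemma as stated fails in this regime, and no execution of your plan (or any other) can close this case without an extra hypothesis such as $m_3<m_1+m_2-1$. For comparison, the paper's proof skirts the same point rather than settling it: its opening reduction via Lemma~\ref{lem:type2-coho-1} controls only the \emph{minimum} of $m_2,m_3$, and its closing ``similar computation'' for the $m_3$-th power produces only equations carrying positive powers of $r$, which are vacuous when $r=0$ --- exactly where the element above lives. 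Your instinct that everything hinges on the two-variable subring was right; the assertion that the $m_1>2$ mechanism handles it is the step that fails.
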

\begin{proof}
    By Lemma~\ref{lem:type2-coho-1}, it is enough to check when $m_1\leq m_2,m_3$.
    Suppose that $(px_1+qx_2+rx_3)^{m_2} =0$ in $H^\ast(X_D)$, where $(q,r)\neq (0,0)$. Then we have
    \[
    (px_1+qx_2+rx_3)^{m_2}=\alpha x_1^{m_1}+q^{m_2}x_2(x_1+x_2)^{m_2-1}+\gamma x_3(x_1+x_2+x_3)^{m_3-2}
    \] as polynomials. Here, we may assume that $q\neq 0$. Indeed, if $q=0$, then $\gamma=0$, so $r=0$. This is a contradiction to $(q,r)\neq (0,0)$. Since $m_1>2$, by comparing the coefficients of $x_1x_2^{m_2-1}$ and $x_1^2x_2^{m_2-2}$ in the above, respectively, we get
    \begin{align*}
        m_2 p q^{m_2-1}&=q^{m_2}(m_2-1),\\
        {m_2\choose 2}p^2 q^{m_2-2}&=q^{m_2}{m_2-1 \choose 2}
    \end{align*}
    Hence we have
    \[ m_2p=q(m_2-1)\quad \text{ and }\quad m_2p^2=q^2(m_2-2).
    \]
    Thus $(m_2-1)p=q(m_2-2)$. There is no integer $m_2$ satisfying both $m_2p=q(m_2-1)$ and $(m_2-1)p=q(m_2-2)$. Hence $(px_1+qx_2+rx_3)^{m_2} \neq 0$ in $H^\ast(X_D)$. From a similar computation, one can show that $(px_1+qx_2+rx_3)^{m_3}\neq 0$ if $(p,q)\neq (0,0).$
\end{proof}

Now, we are ready to prove that the cohomology ring $H^\ast(X_D)$ determines the type of $D$. 

\begin{prop}\label{prop:type}
Let $D=D^I(m_1,m_2,m_3)$ and $\widetilde{D}=D^{I\!I}(\tilde{m}_1,\tilde{m}_2,\tilde{m}_3)$.
Then $H^\ast(X_D)$ and $H^\ast(X_{\widetilde{D}})$ cannot be isomorphic as graded rings.
\end{prop}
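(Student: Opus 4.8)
The plan is to argue by contradiction. Suppose $\psi\colon H^\ast(X_D)\to H^\ast(X_{\widetilde D})$ is a graded ring isomorphism with $D=D^I(m_1,m_2,m_3)$ and $\widetilde D=D^{I\!I}(\widetilde m_1,\widetilde m_2,\widetilde m_3)$. By Corollary~\ref{cor:cohomology-size} the out-degree multi-sets $\{m_1,m_2,m_3\}$ and $\{\widetilde m_1,\widetilde m_2,\widetilde m_3\}$ coincide; write $m$ for their common minimum. Since the cohomology of a generalized Bott manifold is torsion-free, $\psi$ restricts to a lattice isomorphism $H^2(X_D)\to H^2(X_{\widetilde D})$ that, for every exponent $d$, sends a class $\alpha$ with $\alpha^d=0$ to a class with the same property and sends primitive classes to primitive classes. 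The entire argument tracks how many independent \emph{pure-power} directions each ring admits and how their products sit in the integral lattice.

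The decisive asymmetry is that $\mathcal I_{D^I}$ contains the two pure powers $x_1^{m_1}$ and $x_2^{m_2}$, whereas $\mathcal I_{D^{I\!I}}$ contains only the single pure power $x_1^{\widetilde m_1}$. Thus in $H^\ast(X_{D^I})$ the independent classes $x_1,x_2$ satisfy $x_1^{m_1}=x_2^{m_2}=0$ and, since the remaining relation involves $x_3$, generate a subring isomorphic to $H^\ast(\mathbb{C}P^{m_1-1}\times\mathbb{C}P^{m_2-1})$, whose top class $x_1^{m_1-1}x_2^{m_2-1}$ is primitive. Under $\psi$ this would produce two independent classes $\psi(x_1),\psi(x_2)$ in $H^\ast(X_{\widetilde D})$ with $\psi(x_1)^{m_1}=\psi(x_2)^{m_2}=0$ and primitive product $\psi(x_1)^{m_1-1}\psi(x_2)^{m_2-1}$. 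The plan is to show, via Lemmas~\ref{lem:type2-coho-1}--\ref{lem:type2-coho-3}, that $H^\ast(X_{D^{I\!I}})$ admits no such pair, and to split into cases according to $\widetilde m_1$.

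First I treat the generic regime $\widetilde m_1>2$. If $m<\widetilde m_1$, then Lemma~\ref{lem:type2-coho-1} gives that no nonzero class of $H^2(X_{\widetilde D})$ has vanishing $m$-th power, so the $\psi$-image of whichever pure-power direction of $D^I$ realizes the exponent $m$ cannot exist. In the complementary situation, whenever one of the exponents $m_1,m_2$ lies in $\{\widetilde m_2,\widetilde m_3\}$, Lemma~\ref{lem:type2-coho-3} forces every class with vanishing power of that exponent to be a multiple of $x_1$; as $\psi(x_1),\psi(x_2)$ are independent they cannot both be such multiples, which settles the case where both exponents lie in $\{\widetilde m_2,\widetilde m_3\}$. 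The remaining mixed sub-case, in which exactly one exponent equals $\widetilde m_1$, is handled by combining the resulting $\psi(x_2)=\pm x_1$ with the primitivity of $\psi(x_1)^{m_1-1}\psi(x_2)^{m_2-1}$ and the relation $x_1^{\widetilde m_1}=0$: the forced factor $x_1^{m_2-1}$ either vanishes or makes the top product imprimitive, contradicting that $\psi$ preserves primitivity.

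The genuinely delicate regime is $\widetilde m_1=2$, where Lemma~\ref{lem:type2-coho-3} is unavailable: a second pure-power direction really does exist, so counting directions no longer separates the two rings — indeed $D^I(2,2,2)$ and $D^{I\!I}(2,2,2)$ share all rational invariants. Here the integral structure must be invoked. By Lemma~\ref{lem:type2-coho-2} the second direction is $\beta=(\widetilde m_2-1)x_1+\widetilde m_2x_2$ with $\beta^{\widetilde m_2}=0$, and using $x_1^2=0$ one computes $x_1\beta=\widetilde m_2\,x_1x_2$, and more generally $x_1\beta^{\widetilde m_2-1}=\widetilde m_2^{\,\widetilde m_2-1}x_1x_2^{\widetilde m_2-1}$; since $x_1x_2^{\widetilde m_2-1}$ is primitive, the relevant top product of the two square-nilpotent directions of $D^{I\!I}$ is divisible by $\widetilde m_2\ge2$, hence imprimitive, whereas the corresponding product is primitive in $H^\ast(X_{D^I})$. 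As $\psi$ preserves primitivity, this is the final contradiction. I expect this integral/primitivity analysis of the $2$-heavy cases — together with packaging ``two pure-power directions with primitive top product'' as a bona fide isomorphism invariant — to be the main obstacle, the earlier regimes reducing cleanly to Lemmas~\ref{lem:type2-coho-1} and~\ref{lem:type2-coho-3}.
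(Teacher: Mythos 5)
Your proposal is correct and follows essentially the same route as the paper's proof: match the out-degree multisets via Corollary~\ref{cor:cohomology-size}, eliminate the regime $\tilde{m}_1>2$ through Lemmas~\ref{lem:type2-coho-1} and~\ref{lem:type2-coho-3}, and then exploit the distinguished class $\beta=(\tilde{m}_2-1)y_1+\tilde{m}_2y_2$ of Lemma~\ref{lem:type2-coho-2} when $\tilde{m}_1=2$. Your closing contradiction, the imprimitivity of $y_1\beta^{\tilde{m}_2-1}=\tilde{m}_2^{\tilde{m}_2-1}y_1y_2^{\tilde{m}_2-1}$ against the primitivity of $x_1x_2^{m_2-1}$, is the same unimodularity obstruction the paper records as $|\det G|=|g_{33}|m_2\neq 1$, since both express that $y_1$ and $\beta$ span an index-$m_2$ sublattice of $\langle y_1,y_2\rangle$.
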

\begin{proof}
    Suppose that $H^\ast(X_D) \cong H^\ast(X_{\widetilde{D}})$. 
    Then there exists a graded isomorphism $\varphi$ from $\Z[x_1,x_2,x_3]$  to $\Z[y_1,y_2,y_3]$ such that $\mathcal{I}_D$ sends to $\mathcal{I}_{\widetilde{D}}$. Then there is a matrix \(G=[g_{ij}]\in \GL_3(\mathbb{Z})\) satisfying that \(|\det G|=1\) and \([\varphi(x_1),\varphi(x_2),\varphi(x_3)]^{\T}=G[y_1,y_2,y_3]^{\T}\).
    It follows from Lemma~\ref{lem:type2-coho-3} that $\tilde{m}_1=2$. We may further assume that $m_1=2$. Then $m_2=\tilde{m}_2$ and the isomorphism $\varphi$ should send the set $\{\pm x_1,\pm x_2\}$ to $\{\pm y_1,\pm((m_2-1)y_1+m_2y_2)\}$. Then $|\det G|=|g_{33}|m_2$. Since $m_2\geq 2$, we get $|\det G|\neq 1$, which is a contradiction.
\end{proof}

Let us consider the three-dissections of the first type. 
We prepare one lemma about the cohomology of the toric varieties of the first type dissections.

\begin{lem}\label{lem:type1-coho}
    Let \(D=D^{I}(m_1,m_2,m_3)\). There is no element $px_1+qx_2+rx_3\in H^2(X_D)$ with $r\neq 0$ such that $(px_1+qx_2+rx_3)^{m_3}=0$ in $H^\ast(X_D)$.
\end{lem}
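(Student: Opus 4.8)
The plan is to assume such an element exists and convert the vanishing $(px_1+qx_2+rx_3)^{m_3}=0$ in $H^\ast(X_D)$ into a homogeneous polynomial identity, and then compare a handful of monomial coefficients to force an impossible relation among the integers $p,q,r,m_3$. Write $g=x_3(-x_1+x_2+x_3)(x_2+x_3)^{m_3-2}$, so that $\mathcal{I}_{D^{I}}=\langle x_1^{m_1},x_2^{m_2},g\rangle$ is homogeneous with generators of degrees $m_1,m_2,m_3$. Thus $(px_1+qx_2+rx_3)^{m_3}\in\mathcal{I}_{D^{I}}$ means
\[
(px_1+qx_2+rx_3)^{m_3}=\alpha\,x_1^{m_1}+\beta\,x_2^{m_2}+\gamma\,g
\]
as polynomials in $\mathbb{Z}[x_1,x_2,x_3]$, where $\alpha,\beta$ are homogeneous of degrees $m_3-m_1,m_3-m_2$ (taken to be $0$ when these are negative) and, crucially, $\gamma$ is a \emph{constant} because $\deg g=m_3$. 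The first step is to identify $\gamma$: since every monomial of $\alpha x_1^{m_1}$ has $x_1$-degree $\ge m_1\ge 2$ and every monomial of $\beta x_2^{m_2}$ has $x_2$-degree $\ge m_2\ge 2$, the only contribution to $x_3^{m_3}$ on the right comes from $g$, and comparing coefficients of $x_3^{m_3}$ gives $\gamma=r^{m_3}$.

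Assuming $m_3\ge 3$, I would next extract three relations by comparing coefficients of monomials whose right-hand side only sees $\gamma g$. Because of the degree observation just made, the $\alpha x_1^{m_1}$ and $\beta x_2^{m_2}$ terms contribute nothing to any monomial whose $x_1$- and $x_2$-degrees are both at most $1$. Comparing coefficients of $x_1x_3^{m_3-1}$, $x_2x_3^{m_3-1}$, and $x_1x_2x_3^{m_3-2}$, and cancelling the nonzero powers of $r$, yields
\[
m_3p=-r,\qquad m_3q=(m_3-1)r,\qquad m_3(m_3-1)pq=-(m_3-2)r^2.
\]
Multiplying the first two relations gives $m_3^2pq=-(m_3-1)r^2$; eliminating $pq$ between this and the third forces $m_3(m_3-2)=(m_3-1)^2$, that is $0=1$, a contradiction. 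Hence $r=0$ in this range.

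The case $m_3=2$ must be handled separately, since then $(x_2+x_3)^{m_3-2}$ is trivial and the monomial $x_1x_2x_3^{m_3-2}$ used above degenerates. Here $g=x_3(-x_1+x_2+x_3)$ and the identity lives in degree $2$, so I would expand $(px_1+qx_2+rx_3)^2=\gamma g+\alpha x_1^2+\beta x_2^2$ (with $\alpha,\beta$ constants present only when $m_1=2$, $m_2=2$) and compare all coefficients: $x_3^2$ gives $\gamma=r^2$, $x_1x_3$ gives $2p=-r$, $x_2x_3$ gives $2q=r$, and $x_1x_2$ gives $pq=0$. The last equation forces $p=0$ or $q=0$, and either one makes $r=0$ via the two preceding equations, once again contradicting $r\ne 0$.

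The main obstacle is organizational rather than conceptual: one must record the ideal membership with the correct homogeneous multipliers so that $\gamma$ is genuinely a constant, and one must select monomials on which the generators $x_1^{m_1}$ and $x_2^{m_2}$ are invisible, which is precisely where the standing hypotheses $m_1,m_2\ge 2$ are used. The only truly separate computation is the degenerate case $m_3=2$, dispatched above by the direct degree-two coefficient comparison.
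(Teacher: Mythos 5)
Your proof is correct and is essentially the paper's own argument: the paper likewise converts the vanishing into the polynomial identity $(px_1+qx_2+rx_3)^{m_3}=\alpha x_1^{m_1}+\beta x_2^{m_2}+r^{m_3}x_3(-x_1+x_2+x_3)(x_2+x_3)^{m_3-2}$ and compares coefficients, and in its case $m_1=m_2=2$ it extracts exactly your three relations $m_3p=-r$, $m_3q=(m_3-1)r$, $m_3(m_3-1)pq=-(m_3-2)r^2$ and derives the same contradiction. If anything, your version is a touch cleaner: since the monomials $x_1x_3^{m_3-1}$, $x_2x_3^{m_3-1}$, $x_1x_2x_3^{m_3-2}$ have $x_1$- and $x_2$-degree at most one, the standing hypothesis $m_1,m_2\ge 2$ already shields them from $\alpha x_1^{m_1}$ and $\beta x_2^{m_2}$, so your single computation replaces the paper's three-case split (the paper treats $m_1>2$ and $m_2>2$ separately via the monomials $x_1^2x_3^{m_3-2}$ and $x_2^2x_3^{m_3-2}$), and even your separate $m_3=2$ check is subsumed, since the same relations then give $pq=0$ while $2p=-r\neq 0$ and $2q=r\neq 0$.
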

\begin{proof}
    Suppose that $(px_1+qx_2+rx_3)^{m_3}=0$ in $H^\ast(X_D)$ and $r\neq 0$. Then
    \begin{equation}\label{eq:type1-coho-1}
        (px_1+qx_2+rx_3)^{m_3}=\alpha x_1^{m_1} + \beta x_2^{m_2} + r^{m_3}x_3(-x_1+x_2+x_3)(x_2+x_3)^{m_3-2}
    \end{equation}
    as polynomials, where $\alpha$ and $\beta$ are homogeneous polynomials of degree $m_3-m_1$ and $m_3-m_2$, respectively. Note that if $m_3-m_1$ and $m_3-m_2$ are negative, then $\alpha$ and $\beta$ are zero, respectively. Since we assume $r\neq 0$, neither $p$ nor $q$ is zero. However, this is impossible. If $m_1>2$, then we get $p=0$ by comparing the coefficients of
    $x_1x_3^{m_3-1}$ and 
    $x_1^2x_3^{m_3-2}$ in~\eqref{eq:type1-coho-1}. 
    If $m_2>2$, then we get $q=0$ by comparing the coefficients of $x_2x_3^{m_3-1}$ and $x_2^2x_3^{m_3-2}$ in~\eqref{eq:type1-coho-1}.
    If \(m_1=m_2=2\), then by comparing the coefficients of \(x_1x_3^{m_3-1}\), \(x_2x_3^{m_3-1}\), and \(x_1x_2x_3^{m_3-2}\), we get \(m_3p=-r\), and \(m_3q=r(m_3-1)\), and \(m_3(m_3-1)pq=-r^2(m_3-2)\). After doing some calculation, we get \(q=0\).
\end{proof}

\begin{prop}\label{prop:type1}
Let $D=D^I(m_1,m_2,m_3)$ and $\widetilde{D}=D^{I}(\tilde{m}_1,\tilde{m}_2,\tilde{m}_3)$. If $H^\ast(X_D)\cong H^\ast(X_{\widetilde{D}})$, then $D=\widetilde{D}$.
\end{prop}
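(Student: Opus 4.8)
The plan is to show that a graded isomorphism $H^\ast(X_D)\cong H^\ast(X_{\widetilde{D}})$ necessarily preserves the root out-degree. First, by Corollary~\ref{cor:cohomology-size}, such an isomorphism forces the multisets $\{m_1,m_2,m_3\}$ and $\{\tilde m_1,\tilde m_2,\tilde m_3\}$ to coincide. Since a first-type tree $D^I(m_1,m_2,m_3)$ is determined, as an unordered rooted tree, by the root degree $m_3$ together with the unordered pair $\{m_1,m_2\}$ of non-root out-degrees, it suffices to prove $m_3=\tilde m_3$; the equality $\{m_1,m_2\}=\{\tilde m_1,\tilde m_2\}$ then follows by deleting $m_3$ from the common multiset, giving $D=\widetilde{D}$ and hence $X_D\cong X_{\widetilde{D}}$ by Theorem~\ref{thm:variety-tree}.

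The device I would use is the nilpotency index: for a nonzero $\ell\in H^2$ set $\nu(\ell)=\min\{t:\ell^t=0\}$, and for each $t$ let $V_t=\{[\ell]\in \p(H^2\otimes\Q):\ell^t=0\}$. A graded ring isomorphism restricts to a linear bijection on $H^2$ that preserves $\nu$, hence carries $V_t(D)$ bijectively onto $V_t(\widetilde{D})$; in particular $|V_t(D)|=|V_t(\widetilde{D})|$ for every $t$. Next I would compute $V_t$ for $D=D^I(m_1,m_2,m_3)$ on the plane $\langle x_1,x_2\rangle$. Using that $x_1^i x_2^j\neq 0$ in $H^\ast(X_D)$ whenever $i<m_1$ and $j<m_2$ (the third generator of $\mathcal{I}_{D^I}$ is divisible by $x_3$, so it cannot cancel an $x_3$-free monomial; projecting $x_3\mapsto 0$ sends $\mathcal{I}_{D^I}$ to $\langle x_1^{m_1},x_2^{m_2}\rangle$), one checks that $[x_1]\in V_t$ iff $t\ge m_1$, that $[x_2]\in V_t$ iff $t\ge m_2$, and that a generic point of the plane lies in $V_t$ iff $t\ge m_1+m_2-1$. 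Off this plane, Lemma~\ref{lem:type1-coho} shows that every form with nonzero $x_3$-coefficient has $\nu>m_3$; thus $V_{m_3}(D)$ is confined to the line $\p\langle x_1,x_2\rangle$ and is a finite set of $0$, $1$, or $2$ points in the relevant range.

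Finally I would run the counting argument. After possibly interchanging $D$ and $\widetilde{D}$, assume $m_3\le\tilde m_3$ and suppose for contradiction that $m_3<\tilde m_3$. Because $m_3\le\tilde m_3$, Lemma~\ref{lem:type1-coho} applied to $\widetilde{D}$ likewise confines $V_{m_3}(\widetilde{D})$ to the plane $\langle y_1,y_2\rangle$, so both sets are finite and may be counted on their respective lines. Since $\tilde m_3>m_3$ lies in the common multiset, after relabeling we may take $\tilde m_3=m_2$ with $m_2>m_3$, so the two non-root exponents of $D$ are $\{m_1,m_2\}$ while those of $\widetilde{D}$ are $\{m_1,m_3\}$. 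Evaluating the membership conditions from the previous paragraph in the two subcases gives: if $m_3\ge m_1$, then $|V_{m_3}(D)|=1$ (only $[x_1]$) whereas $|V_{m_3}(\widetilde{D})|=2$ (the generator with exponent $m_3$ and the one with exponent $m_1$); if $m_3<m_1$, then $|V_{m_3}(D)|=0$ whereas $|V_{m_3}(\widetilde{D})|=1$. In either case $|V_{m_3}(D)|\neq|V_{m_3}(\widetilde{D})|$, contradicting that the isomorphism induces a bijection between them. Hence $m_3=\tilde m_3$, and the proof concludes as in the first paragraph.

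The main obstacle is isolating the root degree when the $m_i$ coincide: the essential role of Lemma~\ref{lem:type1-coho} is to guarantee that no form involving the root generator $x_3$ enters $V_{m_3}$, so that the cardinality of $V_{m_3}$ is governed purely by the two pure-power (non-root) exponents and can thereby separate the root from the two leaves. Once this plane-confinement is in place, the remaining bookkeeping of membership counts in the subcases is routine.
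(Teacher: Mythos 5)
Your proposal is correct and follows essentially the same route as the paper: both arguments reduce to showing $m_3=\tilde{m}_3$ by counting linear classes in $H^2$ with prescribed vanishing powers, with Corollary~\ref{cor:cohomology-size} supplying the multiset equality and Lemma~\ref{lem:type1-coho} confining any such class to the $x_3$-free plane, where the pure powers $x_1^{m_1}$, $x_2^{m_2}$ govern the count. Your single uniform count of $|V_{m_3}|$ at the smaller root degree merely streamlines the paper's bookkeeping, which uses the invariant $n(D)$ at $m=\min\{m_1,m_2,m_3\}$ together with a short case analysis on the multisets $\{a,a,b\}$ and $\{a,b,c\}$.
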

\begin{proof}
We may assume that \(m_1\leq m_2\) and \(\widetilde{m}_1\leq \widetilde{m}_2\). From Theorem~\ref{thm:main}, we have
\begin{align*}
H^*(X_{D})&=\mathbb{Z}[x_1,x_2,x_3]/\langle x_1^{m_1}, x_2^{m_2}, x_3(-x_1+x_2+x_3)(x_2+x_3)^{m_3-2} \rangle,\\    
H^*(X_{\widetilde{D}})&=\mathbb{Z}[y_1,y_2,y_3]/\langle y_1^{\widetilde{m}_1}, y_2^{\widetilde{m}_2}, y_3(-y_1+y_2+y_3)(y_2+y_3)^{\widetilde{m}_3-2} \rangle.
\end{align*}

By assuming $(m_1,m_2,m_3)\neq (\tilde{m}_1,\tilde{m}_2,\tilde{m}_3)$, we show that $H^\ast(X_D)\not\cong H^\ast(X_{\tilde{D}})$.
We consider two cases according to the set of out-degrees.

\textbf{Case 1.} (\(\{m_1,m_2,m_3\}=\{\widetilde{m}_1,\widetilde{m}_2, \widetilde{m}_3\}=\{a,a,b\}\) with \(a \neq b\))\\
We may assume that \(m_3=b\) and \(\widetilde{m}_3=a\). Then \(n(\widetilde{D})=1\) while \(n(D)=2\) if \(a<b\), and \(n(D)=0\) otherwise. It follows that \(H^*(X_D)\not\cong H^*(X_{\widetilde{D}})\).

\textbf{Case 2.} (\(\{m_1,m_2,m_3\}=\{\widetilde{m}_1,\widetilde{m}_2, \widetilde{m}_3\}=\{a,b,c\}\) with \(a<b<c\))\\
In this case, $(m_1,m_2)$ is $(a,b)$, $(a,c)$, or $(b,c)$. Hence \(m_1=\widetilde{m}_1\) if and only if \(n(D)=n(\widetilde{D})\). Thus it suffices to check when \(m_3=b\) and \(\widetilde{m}_3=c\). Then \(H^*(X_{\widetilde{D}})\) has a linear term \(\alpha\) such that \(\alpha^b=0\), but \(H^*(X_{D})\) has no such term by Lemma~\ref{lem:type1-coho}. It follows that \(H^*(X_D)\not\cong H^*(X_{\widetilde{D}})\).

In any case, we conclude that \(H^*(X_D)\not\cong H^*(X_{\widetilde{D}})\) as desired.
\end{proof}

Finally, we consider the three-dissections of the second type.

\begin{prop}\label{prop:type2}
Let $D=D^{I\!I}(m_1,m_2,m_3)$ and $\widetilde{D}=D^{I\!I}(\tilde{m}_1,\tilde{m}_2,\tilde{m}_3)$. If  $H^\ast(X_D)\cong H^\ast(X_{\widetilde{D}})$, then $D=\widetilde{D}$.
\end{prop}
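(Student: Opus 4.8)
The plan is to recover the ordered triple $(m_1,m_2,m_3)$ directly from the graded ring $H^\ast(X_D)$. By Corollary~\ref{cor:cohomology-size} the multiset $\{m_1,m_2,m_3\}$ is already an invariant, so the entire task is to assign each value to its correct position along the caterpillar $\tau(D)$, i.e.\ to decide which out-degree sits at the deepest internal vertex, which in the middle, and which at the root. The organizing principle is the flag of partial sums $z_1=x_1$, $z_2=x_1+x_2$, $z_3=x_1+x_2+x_3$: rewriting the defining relations as $z_1^{m_1}=0$, $z_2^{m_2}=z_1z_2^{m_2-1}$, and $z_3^{m_3}=z_2z_3^{m_3-1}$, a short computation gives that the nilpotency orders of $z_1,z_2,z_3$ are $m_1$, $m_1+m_2-1$, and $m_1+m_2+m_3-2$. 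Hence it suffices to prove that this flag is intrinsic to the ring, because then the triple is read off by successive differences.

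To make this intrinsic I would use that a graded isomorphism $H^\ast(X_D)\cong H^\ast(X_{\widetilde D})$ lifts to a graded isomorphism $\Phi$ of the ambient polynomial rings carrying the relation ideal of $D$ onto that of $\widetilde D$ and sending minimal generators to minimal generators. The class $x_1$ is singled out because $x_1^{m_1}$ is the unique minimal relation equal to a power of a single linear form: the middle and root relations $x_2(x_1+x_2)^{m_2-1}$ and $x_3(x_1+x_2+x_3)^{m_3-1}$ have two non-proportional linear factors and so are never such powers, while setting $x_2=x_3=0$ shows $x_1^{m_1}$ is genuinely a minimal generator. Since $\Phi$ sends a power $\ell^{m_1}$ of a linear form to the power $\Phi(\ell)^{m_1}$, the degree carrying a \emph{power-of-a-line} minimal relation is preserved, so $m_1=\widetilde m_1$ and $\Phi$ matches the line $\langle x_1\rangle$ with $\langle y_1\rangle$. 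The role of the auxiliary lemmas is to guarantee that this power-of-a-line direction is unique: Lemma~\ref{lem:type2-coho-1} controls the invariant $n(D)$ (it is positive exactly when $m_1=\min\{m_1,m_2,m_3\}$, which already forces $m_1$ and $\widetilde m_1$ to be simultaneously minimal or not), and Lemma~\ref{lem:type2-coho-3} shows that for $m_1>2$ no linear form involving $x_2$ or $x_3$ has a vanishing $m_2$-th or $m_3$-th power, isolating $\langle x_1\rangle$.

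With $m_1=\widetilde m_1$ and $\langle x_1\rangle$ identified, I would peel off the deepest layer: the quotient $H^\ast(X_D)/(x_1)$ is, in the coordinates $z_2,z_3$, the ring $\mathbb{Z}[z_2,z_3]/\langle z_2^{m_2},\,z_3^{m_3}-z_2z_3^{m_3-1}\rangle$, which is the cohomology ring of the two-stage type~II manifold with deepest out-degree $m_2$ and root out-degree $m_3$; crucially the twist on $z_2$ has vanished, so $z_2^{m_2}$ is now the unique power-of-a-line minimal relation. Applying the same argument to this smaller ring gives $m_2=\widetilde m_2$, and one further quotient by $\langle z_2\rangle$ leaves $\mathbb{Z}[z_3]/\langle z_3^{m_3}\rangle$ and yields $m_3=\widetilde m_3$. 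Therefore $(m_1,m_2,m_3)=(\widetilde m_1,\widetilde m_2,\widetilde m_3)$, the rooted trees $\tau(D)$ and $\tau(\widetilde D)$ agree, and $X_D\cong X_{\widetilde D}$ by Theorem~\ref{thm:variety-tree}.

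The main obstacle is the exceptional case $m_1=2$, compounded by coincidences among $m_1,m_2,m_3$. When $m_1=2$ the class $x_1$ is no longer the only low-order linear form: by Lemma~\ref{lem:type2-coho-2} the mixed class $(m_2-1)x_1+m_2x_2$ also has vanishing $m_2$-th power, so if moreover $m_2=2$ a spurious power-of-a-line relation can appear in the same degree and the uniqueness of $\langle x_1\rangle$ fails. Here I would argue as in the proof of Proposition~\ref{prop:type}, tracking the integral matrix $G\in\GL_3(\mathbb{Z})$ with $[\varphi(x_1),\varphi(x_2),\varphi(x_3)]^{\T}=G[y_1,y_2,y_3]^{\T}$: the admissible images of the distinguished low-order forms are so constrained that $|\det G|$ acquires a factor $m_2\geq 2$, contradicting $|\det G|=1$ unless the positions already coincide. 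Carrying this out cleanly requires a case analysis according to the shape of the multiset ($\{a,a,a\}$, $\{a,a,b\}$, $\{a,b,c\}$) and according to whether the deepest out-degree equals $2$, and this bookkeeping is where the substance of the proof lies.
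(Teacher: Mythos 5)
Your skeleton parallels the paper's proof (lift the isomorphism to a matrix $G=[g_{ij}]\in\GL_3(\mathbb{Z})$, invoke Lemmas~\ref{lem:type2-coho-1}--\ref{lem:type2-coho-3}, and split into cases by the multiset of out-degrees), and the flag-and-peeling idea via $z_1=x_1$, $z_2=x_1+x_2$, $z_3=x_1+x_2+x_3$ is attractive and could be made to work in the generic case $m_1=\widetilde{m}_1>2$. But two of your load-bearing steps are flawed. First, a graded isomorphism does \emph{not} send minimal generators to minimal generators: $\varphi(x_1^{m_1})=\varphi(x_1)^{m_1}$ is only guaranteed to lie in $\mathcal{I}_{\widetilde{D}}$, so in general it is a combination $\alpha\, y_1^{\widetilde{m}_1}+\beta\, y_2(y_1+y_2)^{\widetilde{m}_2-1}+\gamma\, y_3(y_1+y_2+y_3)^{\widetilde{m}_3-1}$ with \emph{polynomial} coefficients $\alpha,\beta,\gamma$. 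Ruling out the mixed contributions is precisely what the paper's coefficient comparisons in \eqref{eq:case2-1a}, \eqref{eq:case2-1b}, \eqref{eq:case2-1-2a}, \eqref{eq:case2-2-2b}, and \eqref{eq:case2-2-3a} accomplish; for instance in Case~1-2 ($(a,b,a)$ versus $(a,a,b)$ with $b<a$) the image of $x_1^a$ can a priori involve all three generators, and the paper needs the pair of identities $ag_{11}g_{12}^{a-1}=(a-1)g_{12}^a$ and $ag_{11}^{a-1}g_{12}=g_{12}^a$ to force $g_{12}=0$. So your inference ``the degree carrying a power-of-a-line minimal relation is preserved, hence $m_1=\widetilde{m}_1$'' does not follow without exactly the computations you omit.

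Second, and more seriously, the one mechanism you propose for the hard cases is unsound. You suggest that, as in Proposition~\ref{prop:type}, matching the distinguished low-order linear forms forces $|\det G|$ to acquire a factor $m_2\geq 2$. That worked in Proposition~\ref{prop:type} because only \emph{one} side (type $I\!I$) had the extra line $(m_2-1)y_1+m_2y_2$ of Lemma~\ref{lem:type2-coho-2}. Here both sides are of type $I\!I$: when $m_1=\widetilde{m}_1=2$ the two rings carry the same configuration of low-order lines, $\{\pm x_1,\pm((m_2-1)x_1+m_2x_2)\}$ and $\{\pm y_1,\pm((\widetilde{m}_2-1)y_1+\widetilde{m}_2y_2)\}$, and these can be matched unimodularly (e.g.\ $x_1\mapsto y_1$, $x_2\mapsto y_2$ when $m_2=\widetilde{m}_2$), so no determinant obstruction arises from the low-order forms alone. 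The actual contradictions in the paper come from the relations in degrees $m_2$ and $m_3$: e.g.\ in Case~1-1 the coefficient of $y_2y_3^{b-1}$ yields $bg_{22}=(b-1)g_{23}$ and hence the non-integral $|g_{22}|=(b-1)/b$, while Cases~2-1, 2-2, and 2-3 proceed through vanishing cascades ($g_{22}=0\Rightarrow g_{23}=0\Rightarrow\det G=0$) and, in the subcases of \eqref{eq:case2-2-3a}, the substitution $y_1=1$, $y_2=-1$, $y_3=0$ into \eqref{eq:cases2-2-3-4} and \eqref{eq:cases2-2-3-3} to force $g_{31}=g_{32}$. The same gap recurs in your peeling step: after quotienting by $\langle x_1\rangle$ the uniqueness of the power-of-a-line again fails when the new first out-degree is $2$, by the two-variable analogue of Lemma~\ref{lem:type2-coho-2}. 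Since you yourself locate ``the substance of the proof'' in this bookkeeping but do not carry it out, and your proposed shortcut for it fails, the core of the proposition remains unproven in your write-up.
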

\begin{proof}
If $H^\ast(X_D)\cong H^\ast(X_{\widetilde{D}})$, there exists a graded isomorphism $\varphi$ from $\Z[x_1,x_2,x_3]$  to $\Z[y_1,y_2,y_3]$ such that $\mathcal{I}_D$ sends to $\mathcal{I}_{\widetilde{D}}$. Then there is a matrix \(G=[g_{ij}]\in \GL_3(\mathbb{Z})\) satisfying that \(|\det G|=1\) and \([\varphi(x_1),\varphi(x_2),\varphi(x_3)]^{\T}=G[y_1,y_2,y_3]^{\T}\). In the following, we show that the assumptions $(m_1,m_2,m_3)\neq (\tilde{m}_1,\tilde{m}_2,\tilde{m}_3)$ and $H^\ast(X_D)\cong H^\ast(X_{\widetilde{D}})$ lead to a contradiction to the existence of a matrix $G$.

Now we consider two cases according to the set of out-degrees. 

\textbf{Case 1.} (\(\{m_1,m_2,m_3\}=\{\widetilde{m}_1,\widetilde{m}_2, \widetilde{m}_3\}=\{a,a,b\}\) with \(a \neq b\))\\
In order to have that \(n(D)=n(\widetilde{D})\), we set \(m_1=\widetilde{m}_1\). We may assume that \((m_1,m_2,m_3)=(a,b,a)\) and \((\widetilde{m}_1,\widetilde{m}_2,\widetilde{m}_3)=(a,a,b)\). Then we have
\begin{align*}
\mathcal{I}_D&=\langle x_1^{a}, x_2(x_1+x_2)^{b-1}, x_3(x_1+x_2+x_3)^{a-1} \rangle,\\ 
\mathcal{I}_{\widetilde{D}}&=\langle y_1^{a}, y_2(y_1+y_2)^{a-1}, y_3(y_1+y_2+y_3)^{b-1} \rangle.
\end{align*}

We divide two sub-cases according to the size of $a$ and $b$.

\textbf{Case 1-1.} (\(a<b\))

Since $a<b$, \(\varphi(x_1^a)=\pm y_1^a\). It follows that 
\begin{align}
\label{eq:case2-1-1-2}
g_{12}=g_{13}&=0,\\
\label{eq:case2-1-1-3}
|g_{11}|=1, & \quad
|g_{22}g_{33}-g_{23}g_{32}|=1.
\end{align}
Since \(\varphi(x_3(x_1+x_2+x_3)^{a-1})\) and \(\varphi(x_2(x_1+x_2)^{b-1})\) belong to $\mathcal{I}_{\widetilde{D}}$, it follows from~\eqref{eq:case2-1-1-2} that
\begin{equation}\label{eq:case2-1a}
\begin{split}
(g_{31}y_1+g_{32}y_2+g_{33}y_3)\{(g_{11}+g_{21}+g_{31})y_1+(g_{22}+g_{32})y_2
+(g_{23}+g_{33})y_3\}^{a-1}\\
=g_{31}(g_{11}+g_{21}+g_{31})^{a-1}y_1^a+g_{32}(g_{22}+g_{32})^{a-1}y_2(y_1+y_2)^{a-1}
\end{split}
\end{equation}
and
\begin{equation}\label{eq:case2-1b}
\begin{split}
(g_{21}y_1+g_{22}y_2+g_{23}y_3)\{(g_{11}+g_{21})y_1+g_{22}y_2+g_{23}y_3\}^{b-1}\\
=\alpha y_1^a+\beta y_2(y_1+y_2)^{a-1}+g_{23}^b y_3(y_1+y_2+y_3)^{b-1}
\end{split}
\end{equation}
as polynomials, where $\alpha$ and $\beta$ are homogeneous polynomials of degree $b-a$.
Comparing the coefficients of \(y_3^a\) in~\eqref{eq:case2-1a}, we have 
\(g_{33}(g_{23}+g_{33})=0\). From this fact together with \eqref{eq:case2-1-1-3}, it follows that 
\begin{equation}\label{eq:case2-1-1-5}
|g_{23}|=1.
\end{equation}
Comparing the coefficients of \(y_2y_3^{b-1}\) in both sides of~\eqref{eq:case2-1b}, we have \(bg_{22}=(b-1)g_{23}\). By \eqref{eq:case2-1-1-5}, we get \(|g_{22}|=(b-1)/b\), which contradicts to \(g_{22}\in\mathbb{Z}\). Therefore, we conclude that there is no such \(G\).

\textbf{Case 1-2.} (\(b<a\))

Since \(\varphi(x_1^{a})\) belongs to $\mathcal{I}_{\widetilde{D}}$, we have 
\begin{equation}\label{eq:case2-1-2a}
(g_{11}y_1+g_{12}y_2+g_{13}y_3)^a=g_{11}^ay_1^a+g_{12}^ay_2(y_1+y_2)^{a-1}+\gamma y_3(y_1+y_2+y_3)^{b-1},
\end{equation}
where $\gamma$ is a homogeneous polynomial of degree $a-b$.
Comparing the coefficients of \(y_1y_2^{a-1}\) and \(y_1^{a-1}y_2\) in both sides of~\eqref{eq:case2-1-2a}, we obtain 

\begin{align}
\label{eq:case2-1-2-1}
ag_{11}g_{12}^{a-1}&=(a-1)g_{12}^a,\\
\label{eq:case2-1-2-2}
ag_{11}^{a-1}g_{12}&=g_{12}^a.
\end{align}

From \eqref{eq:case2-1-2-1}, we get \(g_{12}=0\) or \(ag_{11}=(a-1)g_{12}\). However, if \(g_{12}\neq 0\), then \(g_{11}=g_{12}\) yields that \(a=2\) by \eqref{eq:case2-1-2-2}, which contradicts to \(a>2\). Thus, we have
    $g_{12}=0.$
Applying this to~\eqref{eq:case2-1-2a}, we get $\gamma=0$ since there is no term containing $y_2$ on the left-hand side of~\eqref{eq:case2-1-2a}. Thus $g_{13}=0$. Then $g_{11}\neq 0$ since $\det G\neq 0$.
Since $g_{12}=g_{13}=0$ and \(\varphi(x_2(x_1+x_2)^{b-1})\) belongs to $\mathcal{I}_{\widetilde{D}}$, we have
\begin{equation}\label{eq:case2-1-2b}
\begin{split}
(g_{21}y_1+g_{22}y_2+g_{23}y_3)\{(g_{11}+g_{21})y_1+g_{22}y_2+g_{23}y_3\}^{b-1}\\
=g_{23}^{b}y_3(y_1+y_2+y_3)^{b-1}.
\end{split}
\end{equation}
Comparing the coefficients of \(y_2^b\) in both sides of~\eqref{eq:case2-1-2b}, we get 
    $g_{22}=0.$
Then there is no term containing $y_2$ on the left side of~\eqref{eq:case2-1-2b}, so $g_{23}=0$. This is a contradiction to $\det G\neq 0$.

\textbf{Case 2.} (\(\{m_1,m_2,m_3\}=\{\widetilde{m}_1,\widetilde{m}_2, \widetilde{m}_3\}=\{a,b,c\}\) with \(a<b<c\))

We consider three sub-cases.

\textbf{Case 2-1.} (\((m_1,m_2,m_3)=(a,b,c)\) and \((\widetilde{m}_1,\widetilde{m}_2,\widetilde{m}_3)=(a,c,b)\))\\
In this case we have
\begin{align*}
H^*(X_{D})&=\mathbb{Z}[x_1,x_2,x_3]/\langle x_1^{a}, x_2(x_1+x_2)^{b-1}, x_3(x_1+x_2+x_3)^{c-1} \rangle,\\ 
H^*(X_{\widetilde{D}})&=\mathbb{Z}[y_1,y_2,y_3]/\langle y_1^{a}, y_2(y_1+y_2)^{c-1}, y_3(y_1+y_2+y_3)^{b-1} \rangle.
\end{align*}
Since $a<b<c$, we have \(\varphi(x_1^a)=\pm y_1^a\), so \(|g_{11}|=1\) and \(g_{12}=g_{13}=0\). Since \(\varphi(x_2(x_1+x_2)^{b-1})\) belongs to $\mathcal{I}_{\widetilde{D}}$, we have
\[
(g_{21}y_1+g_{22}y_2+g_{23}y_3)\{(g_{11}+g_{21})y_1+g_{22}y_2+g_{23}y_3\}^{b-1}=\alpha y_1^a +g_{23}^b y_3 (y_1+y_2+y_3)^{b-1},
\] where $\alpha$ is a homogeneous polynomial of degree $b-a$.
Comparing the coefficients of~$y_2^b$, we have \(g_{22}=0\). However, this yields that \(g_{23}=0\) so that \(\det G=0\), which is a contraction. There is no such \(G\).

\textbf{Case 2-2.} (\((m_1,m_2,m_3)=(b,a,c)\) and \((\widetilde{m}_1,\widetilde{m}_2,\widetilde{m}_3)=(b,c,a)\))
In this case, we have
\begin{align*}
H^*(X_{D})&=\mathbb{Z}[x_1,x_2,x_3]/\langle x_1^{b}, x_2(x_1+x_2)^{a-1}, x_3(x_1+x_2+x_3)^{c-1} \rangle,\\ 
H^*(X_{\widetilde{D}})&=\mathbb{Z}[y_1,y_2,y_3]/\langle y_1^{b}, y_2(y_1+y_2)^{c-1}, y_3(y_1+y_2+y_3)^{a-1} \rangle.
\end{align*}
Since \(\varphi(x_1^b)\) belongs to $\mathcal{I}_{\widetilde{D}}$, we have
\begin{equation}\label{eq:case2-2-2b}
(g_{11}y_1+g_{12}y_2+g_{13}y_3)^b=g_{11}^by_1^b+\gamma y_3(y_1+y_2+y_3)^{a-1},
\end{equation}
where $\gamma$ is a homogeneous polynomial of degree $b-a$.
Comparing the coefficients of \(y_2^b\) in both sides of \eqref{eq:case2-2-2b}, we get
    $g_{12}=0.$
Then there is no term having $y_2$ on the left side of \eqref{eq:case2-2-2b}, so $\gamma$ should be zero. Thus $|g_{11}|=1$ and $g_{13}=0$.
Since \(\varphi(x_2(x_1+x_2)^{a-1})\) belong to $\mathcal{I}_{\widetilde{D}}$, we have
\begin{equation}\label{eq:case2-2-2a}
\begin{split}
(g_{21}y_1+g_{22}y_2+g_{23}y_3)\{(g_{11}+g_{21})y_1+g_{22}y_2+g_{23}y_3\}^{a-1}\\
=g_{23}^{a}y_3(y_1+y_2+y_3)^{a-1}.
\end{split}
\end{equation}
Comparing the coefficients of \(y_2^a\) in both sides of \eqref{eq:case2-2-2a}, we have \(g_{22}=0\). Then there is no term having \(y_2\) on the left-hand side of \eqref{eq:case2-2-2a}, so $g_{23}=0$. This is a contradiction to $\det G\neq 0$. There is no such \(G\).

\textbf{Case 2-3.} (\((m_1,m_2,m_3)=(c,a,b)\) and \((\widetilde{m}_1,\widetilde{m}_2,\widetilde{m}_3)=(c,b,a)\))

In this case we have
\begin{align*}
H^*(X_{D})&=\mathbb{Z}[x_1,x_2,x_3]/\langle x_1^{c}, x_2(x_1+x_2)^{a-1}, x_3(x_1+x_2+x_3)^{b-1} \rangle,\\ 
H^*(X_{\widetilde{D}})&=\mathbb{Z}[y_1,y_2,y_3]/\langle y_1^{c}, y_2(y_1+y_2)^{b-1}, y_3(y_1+y_2+y_3)^{a-1} \rangle.
\end{align*}
Since \(\varphi(x_2(x_1+x_2)^{a-1})\) belongs to $\mathcal{I}_{\widetilde{D}}$, we have
\begin{equation}\label{eq:case2-2-3a}
\begin{split}
(g_{21}y_1+g_{22}y_2+g_{23}y_3)\{(g_{11}+g_{21})y_1+(g_{12}+g_{22})y_2+(g_{13}+g_{23})y_3\}^{a-1}\\
=g_{23}(g_{13}+g_{23})^{a-1}y_3(y_1+y_2+y_3)^{a-1}.
\end{split}
\end{equation}
Here, \(g_{23}(g_{13}+g_{23})\neq0\). If not, it contradicts to $\det G\neq 0$.

Comparing the coefficients of \(y_1^a\) and \(y_2^a\) in both sides of \eqref{eq:case2-2-3a}, we have
\begin{equation}\label{eq:case2-2-3-1}
g_{21}(g_{11}+g_{21})=0 \quad \text{and} \quad g_{22}(g_{12}+g_{22})=0.
\end{equation}
There are four possibilities.
\begin{enumerate}
    \item[{\rm i)}] If \(g_{21}=0\) and \(g_{22}\neq0\), then \eqref{eq:case2-2-3a} becomes
    \[
    (g_{22}y_2+g_{23}y_3)\{g_{11}y_1+(g_{13}+g_{23})y_3\}^{a-1}=g_{23}(g_{13}+g_{23})^{a-1}y_3(y_1+y_2+y_3)^{a-1}.
    \]
    Comparing the coefficients of \(y_1^{a-1}y_2\) in both sides of the above equation, we have \(g_{11}=0\). Then there is no term having $y_1$ on the left-hand side, which is a contradiction. 
    \item[{\rm ii)}] If \(g_{21}\neq0\) and \(g_{22}=0\), then \eqref{eq:case2-2-3a} becomes
    \[
    (g_{21}y_1+g_{23}y_3)\{g_{12}y_2+(g_{13}+g_{23})y_3\}^{a-1}=g_{23}(g_{13}+g_{23})^{a-1}y_3(y_1+y_2+y_3)^{a-1}.
    \]
    Comparing the coefficients of \(y_1y_2^{a-1}\) in both sides of the above equation, we have \(g_{12}=0\). Then there is no term having $y_2$ on the left-hand side, which is a contradiction. 
    \item[{\rm iii)}] If \(g_{21}=g_{22}=0\), then \eqref{eq:case2-2-3a} becomes
    \[
    y_3\{g_{11}y_1+g_{12}y_2+(g_{13}+g_{23})y_3\}^{a-1}=(g_{13}+g_{23})^{a-1}y_3(y_1+y_2+y_3)^{a-1},
    \]
    so we have
    \begin{equation*}\label{eq:case2-2-3-3}
    g_{11}=g_{12}=g_{13}+g_{23}.
    \end{equation*}
    From the fact that \(\varphi(x_3(x_1+x_2+x_3)^{b-1})\) belongs to $\mathcal{I}_{\widetilde{D}}$, we have
\begin{equation}\label{eq:cases2-2-3-4}
    \begin{split}
        (g_{31}y_1+g_{32}y_2+g_{33}y_3)\{(g_{11}+g_{31})y_1 +(g_{12}+g_{32})y_2+(g_{13}+g_{23}+g_{33})y_3\}^{b-1}\\
        =g_{32}(g_{12}+g_{23})^{b-1}y_2(y_1+y_2)^{b-1}+\alpha y_3(y_1+y_2+y_3)^{a-1},
    \end{split}
\end{equation}
where $\alpha$ is a homogeneous polynomial of degree $b-a$.
Plugging $y_1=1$, $y_2=-1$, and $y_3=0$ into~\eqref{eq:cases2-2-3-4}, we get $g_{31}=g_{32}$. This contradicts to $\det G\neq 0$.
    \item[{\rm iv)}] If \(g_{21}\neq0\) and \(g_{22}\neq0\), then by \eqref{eq:case2-2-3-1}, 
    we get
    \[g_{11}+g_{21}=0\quad \text{and} \quad g_{12}+g_{22}=0.\]
    Hence
     \eqref{eq:case2-2-3a} becomes
    \begin{equation*}
            (g_{21}y_1+g_{22}y_2+g_{23}y_3)y_3^{a-1}
            =g_{23}y_3(y_1+y_2+y_3)^{a-1}.
    \end{equation*}
    On the left-hand side of the above equation, the degree of $y_1$ or $y_2$ in each term is at most one, so $a=2$, and $g_{21}=g_{22}=g_{23}$. Since $g_{11}+g_{21}=0$ and $g_{12}+g_{22}=0$, we get
    \begin{equation*}\label{eq:case2-2-3-2}
        g_{11}=g_{12}=-g_{21}=-g_{22}=-g_{23}\neq0.
    \end{equation*}
From the fact that \(\varphi(x_3(x_1+x_2+x_3)^{b-1})\) belongs to $\mathcal{I}_{\widetilde{D}}$, we have
\begin{equation}\label{eq:cases2-2-3-3}
    \begin{split}
        (g_{31}y_1+g_{32}y_2+g_{33}y_3)\{g_{31}y_1 +g_{32}y_2+(g_{13}+g_{23}+g_{33})y_3)\}^{b-1}\\
        =g_{32}g_{23}^{b-1}y_2(y_1+y_2)^{b-1}+\alpha y_3(y_1+y_2+y_3)^{a-1},
    \end{split}
\end{equation}
where $\alpha$ is a homogeneous polynomial of degree $b-a$.
Plugging $y_1=1$, $y_2=-1$, and $y_3=0$ into~\eqref{eq:cases2-2-3-3}, we get \(g_{31}=g_{32}\), which is a contradiction to $\det G\neq 0$.
\end{enumerate} 
Hence, in any case, there is no isomorphism between $H^\ast(X_D)$ and $H^\ast(X_{\widetilde{D}})$.
\end{proof}

Combining Propositions~\ref{prop:type},~\ref{prop:type1}, and~\ref{prop:type2}, we prove Theorem~\ref{thm1}.

\section{Further discussion}\label{sec:further}

In this section, we discuss Problem~\ref{prob1} and introduce a problem about a relation between toric varieties of Schr\"{o}der type and flag varieties.

Let $D$ be a $k$-dissection of $\mathsf{P}_{n+2}$. Using Corollary~\ref{cor:leaf}, we define a set $L(D)$ by the set of leaves $w$ in $\tau(D)$ such that $\phi(w)=\{a_i,b_i\}$ in Theorem~\ref{thm:main}. Then for every $w\in L(D)$, if $v$ is the parent of $w$, then $x_{a_ib_i}^{\ell_i}=0$ in $H^\ast(X_D)$, where $\ell_i$ is the number of children of $v$.

The following proposition gives another evidence for the affirmative answer to Problem~\ref{prob1}.

\begin{prop}\label{prop:further}
Let $\tau(D)$ and $\tau(\tilde{D})$ be the Schr\"{o}der trees with at least four internal vertices such that all the internal vertices have the same number of children. If $|L(D)|\neq |L(\tilde{D})|$, then $H^\ast(X_D)\not\cong H^\ast(X_{\widetilde{D}})$. 
\end{prop}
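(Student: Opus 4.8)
The plan is to realise $|L(D)|$ as an invariant of the graded ring $H^\ast(X_D)$ and then argue by contraposition. First I would apply Corollary~\ref{cor:cohomology-size}: an isomorphism $H^\ast(X_D)\cong H^\ast(X_{\widetilde D})$ forces the two dissections to have the same number $k$ of pieces and the same multiset $\{|E(\mathsf{P}^{(i)})|\}$. Since by hypothesis every internal vertex of $\tau(D)$ and of $\tau(\widetilde D)$ has the same number of children, this multiset is constant, say equal to $m$, and $m$ is common to both trees. Consequently $X_D$ and $X_{\widetilde D}$ are both $k$-stage iterated $\mathbb{C}P^{m-1}$-bundles with the same Poincar\'e polynomial, so a coarse numerical invariant will not suffice; I must produce a genuinely finer invariant that recovers $|L(D)|$.

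The invariant I would use comes from the $m$-th power map on $H^2$. By Theorem~\ref{thm:main} each relation factors as $p_i=x_{a_ib_i}\prod_{j=1}^{m-1}(x_{a_ib_i}+c_{ij})$, where the classes $c_{ij}$ are linear combinations of the generators indexed by descendants of $v_i$. A vertex $w\in L(D)$ is exactly one for which the rightmost child $w_{i\ell_i}$ is a leaf; then the relevant $S$-sets degenerate and $p_i$ collapses to the pure power $x_{a_ib_i}^{m}$, recovering the relation $x_{a_ib_i}^m=0$ of Corollary~\ref{cor:leaf}. These correspond precisely to the stages of the bundle tower that are trivial products. I would therefore define $N(D)$ to be a basis-free count of these trivial stages — for definiteness, the number of $\mathbb{Z}$-lines $\langle\alpha\rangle\subseteq H^2(X_D)$ on which $\alpha^m=0$ that are not forced by the vanishing relations of strictly lower stages — and prove $N(D)=|L(D)|$ by reading off the factored relations stage by stage along the preorder of $\tau(D)$.

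The hard part will be establishing that $N(D)$ depends only on the ring and equals $|L(D)|$, with no over-counting. The obstacle is that $\{\alpha\in H^2:\alpha^m=0\}$ is \emph{not} a linear subspace: the relation of a trivial stage interacts with those of the stages directly above it to manufacture extra solutions. For instance, when $m=2$ the relations $x_{a_ib_i}^2=0$ and $x_{a'b'}(x_{a'b'}-x_{a_ib_i})=0$ of two consecutive stages already force $(x_{a_ib_i}-2x_{a'b'})^2=0$, an element with vanishing square corresponding to no vertex of $L(D)$. The \emph{crux} of the argument is thus to characterise and discard such derived solutions, isolating the $|L(D)|$ genuine ones; this is exactly where I expect both hypotheses to enter — the common out-degree $m$ makes the tower a uniform iterated $\mathbb{C}P^{m-1}$-bundle, so the spurious solutions are controlled by the chains of relations from which they descend, while the assumption of at least four internal vertices rules out the degenerate small trees where this control can fail. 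Once $N(D)=|L(D)|$ is shown to be a graded-ring invariant, the inequality $|L(D)|\neq|L(\widetilde D)|$ yields $H^\ast(X_D)\not\cong H^\ast(X_{\widetilde D})$ immediately.
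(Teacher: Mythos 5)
Your overall strategy---turning the count of degree-two classes with vanishing $m$-th power into a graded-ring invariant that recovers $|L(D)|$---is the same germ as the paper's argument, but the proposal stalls exactly at the step you flag as the crux, and the patch you propose would not work as stated. The invariant $N(D)$ you define (``lines $\langle\alpha\rangle\subseteq H^2$ with $\alpha^m=0$ that are not forced by the vanishing relations of strictly lower stages'') is phrased in terms of the bundle-tower presentation, not the ring: ``lower stages'' means nothing to an abstract graded-ring isomorphism, so the central claim that $N(D)$ is basis-free is unproven, and you supply no mechanism for the promised ``characterise and discard'' step. The paper sidesteps this entirely by proving that for common out-degree $\ell>2$ there are \emph{no} non-monomial solutions to discard: given $\alpha=\sum_i c_i x_{a_ib_i}$ with $\alpha^{\ell}=0$, it restricts $\alpha$ to the Schr\"oder subtree spanned by any three adjacent internal vertices---a quotient ring obtained by setting the remaining generators to zero, so the condition $\alpha^{\ell}=0$ survives---and invokes Lemmas~\ref{lem:type2-coho-3} and~\ref{lem:type1-coho} to conclude that among any three adjacent internal vertices at most one $c_i$ is nonzero; hence any two nonzero coefficients sit at tree-distance greater than two, and killing all but those two generators lands in $\Z[x,x']/\langle x^{\ell},\,x'(\pm x+x')^{\ell-1}\rangle$ or $\Z[x,x']/\langle x^{\ell},\,x'^{\ell}\rangle$, where a direct check using $\ell>2$ rules out mixed solutions. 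Thus $\{\alpha\in H^2: \alpha^{\ell}=0\}$ is exactly the union of the $|L(D)|$ monomial lines, the naive count is already a ring invariant, and no discarding is needed.

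Second, your own counterexample $(x-2x')^2=0$ lives at $m=2$, and there your plan genuinely fails: spurious solutions do occur, your discarding rule is not shown to be presentation-independent, and the hypothesis of at least four internal vertices does not remove them. The paper disposes of $\ell=2$ separately: those $X_D$ are toric varieties of Catalan type, for which cohomological rigidity is already known~\cite{LMP_directed_Dynkin}, and only then runs the invariant argument for $\ell>2$. Without that case split, and without a proof that non-monomial solutions are absent when $\ell>2$ (or a genuinely ring-theoretic definition of $N(D)$), the proposal does not close. A minor further slip: the relations $x_{a_ib_i}^{\ell}=0$ from Corollary~\ref{cor:leaf} record rightmost children that are leaves, not stages of the tower that are trivial products, so even the heuristic identification underlying your count needs care.
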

\begin{proof}
Assume that each vertex of $\tau(D)$ and $\tau(\tilde{D})$ has $\ell$ children. 
If $\ell=2$, then $X_D$ and $X_{\widetilde{D}}$ are toric varieties of Catalan type. It is shown in~\cite{LMP_directed_Dynkin} that two toric varieties of Catalan type are isomorphic if and only their integral cohomology rings are isomorphic as graded rings. Thus it is enough to prove the case when $\ell>2$.

Suppose that $|L(D)|<|L(\widetilde{D})|$ and $H^\ast(X_D)\cong H^\ast(X_{\widetilde{D}})$. Then there exists a non-monomial element $\alpha=\sum_{i=1}^kc_ix_{a_ib_i}$ in $H^2(X_D)$ such that $\alpha^{\ell}=0$ in $H^\ast(X_D)$. Let $v_{i_1},v_{i_2},v_{i_3}$ be three adjacent internal vertices of $\tau(D)$. Consider the Schr\"{o}der subtree $T$ of $\tau(D)$ by taking $v_{i_1},v_{i_2},v_{i_3}$ and their children. Then $T$ defines a toric variety $X_T$ and $H^\ast(X_T)$ is obtained from $H^\ast(X_D)$ by substituting $x_{a_ib_i}=0$ whenever the parent of the vertex labeled by $\{a_i,b_i\}$ is not in $\{v_{i_1},v_{i_2},v_{i_3}\}$. Since the restriction $\alpha_T$ of $\alpha$ to $H^\ast(X_T)$ also satisfies that $\alpha^\ell=0$, Lemmas~\ref{lem:type2-coho-3} and~\ref{lem:type1-coho} imply that at most one of $c_{i_1}$, $c_{i_2}$, and $c_{i_3}$ is nonzero. Hence if $c_i$ and $c_{i'}$ are nonzero, then the distance between $v_i$ and $v_{i'}$ is greater than two. 
Let $\mathcal{H}$ be the subring of $H^\ast(X_D)$ obtained by substituting $x_{a_jb_j}=0$ for $j\neq i,i'$.
Then $(c_ix_{a_ib_i}+c_{i'}x_{a_{i'}b_{i'}})^\ell=0$ in $\mathcal{H}$, where $\mathcal{H}$ is
$$\Z[x_{a_ib_i},x_{a_{i'}b_{i'}}]/\langle x_{a_ib_i}^\ell,x_{a_{i'}b_{i'}}(\pm x_{a_ib_i}+x_{a_{i'}b_{i'}})^{\ell-1}\rangle$$
or $$\Z[x_{a_ib_i},x_{a_{i'}b_{i'}}]/\langle x_{a_ib_i}^\ell,x_{a_{i'}b_{i'}}^{\ell}\rangle.$$
Since $\ell>2$, in any case, there is no linear element $c_ix_{a_ib_i}+c_{i'}x_{a_{i'}b_{i'}}$ such that $c_ic_{i'}\neq 0$ and $(c_ix_{a_ib_i}+c_{i'}x_{a_{i'}b_{i'}})^\ell=0$ in $\mathcal{H}$.
Therefore, there is no non-monomial element $\alpha\in H^2(X_D)$ such that $\alpha^\ell=0$ in $H^\ast(X_D)$. This proves the proposition.
\end{proof}

We close this section by introducing a problem about a relation between toric varieties of Schr\"{o}der type and flag varieties. It is shown in~\cite{LMP_Catalan} that every toric variety of Catalan type appears as a certain Richardson variety in the full flag variety. Since the family of toric varieties of Schr\"{o}der type includes the toric varieties of Catalan type, one can ask naturally whether toric varieties of Schr\"{o}der type can be realized as subvarieties of flag variety like Richardson varieties.

\begin{problem}
For each dissection $D$ of a polygon, does the toric variety $X_D$ appear in a (partial) flag variety as a Richardson variety?
\end{problem}



\begin{thebibliography}{10}

\bibitem{Batyrev}
	V.~V. Batyrev. 
	\newblock On the classification of toric {F}ano {$4$}-folds. 
	\newblock \emph{J. Math. Sci. (New York)} \textbf{94} (1999), 1021--1050.



\bibitem{Cayley59}
A. Cayley.
\newblock On the analytical form called trees, Part II.
\newblock \emph{Philos. Mag. (4)} \textbf{18} (1859), 374--378.

\bibitem{Cayley}
A. Cayley.
\newblock On the partitions of a polygon.
\newblock \emph{Proc. London Math. Soc. (1)} \textbf{22} (1890--1891), 237--262.

\bibitem{CLMPa}
	Y. Cho, E. Lee, M. Masuda, and S. Park.
	\newblock {\em On Fano generalized Bott manifolds}, in preparation.
	
\bibitem{Choi}
    S. Choi.
    \newblock Classification of {B}ott manifolds up to dimension eight. \newblock {\em Proc. Edinb. Math. Soc. (2)} \textbf{58} (2015), 653--659.

\bibitem{Choi-Masuda}
    S. Choi and M. Masuda.
    \newblock {Classification of {Q}-trivial Bott manifolds}.
    \newblock {\em J. Symplectic Geom.} \textbf{10}, no. 3 (2012), 447--461.

\bibitem{CMS10_trans}
	S. Choi, M. Masuda, and D.~Y. Suh.
	\newblock {Topological
		classification of generalized {B}ott towers}.
	\newblock {\em Trans. Amer. Math. Soc.}
	\textbf{362} (2010), no.~2, 1097--1112.
	
\bibitem{CMS_osaka}
    S. Choi, M. Masuda, and D.~Y. Suh.
    \newblock {Quasitoric manifolds over a product of simplices}.
    \newblock {\em Osaka J. Math.}
    \textbf{47} (2010), 109--129.

\bibitem{CLS11Toric}
	D.~A. Cox, J.~B. Little, and H.~K. Schenck.
	\newblock {\em Toric varieties}, volume 124 of {\em Graduate Studies in	Mathematics}.
	\newblock American Mathematical Society, Providence, RI, 2011.
	
\bibitem{Choi-Panov-Suh}
    S. Choi, T. Panov, and D.~Y. Suh.
    \newblock {Toric cohomological rigidity of simple convex polytopes.}
    \newblock {\em J. London Math. Soc. (2)} \textbf{82} (2010), 343--360.
    
\bibitem{Danilov}
    V. Danilov.
    \newblock {The geometry of toric varieties} (Russian).
    \newblock {\em Uspekhi Mat. Nauk} \textbf{33} (1978), no.~2, 85--134, Egnlish translation in: Russian Math. Surveys \textbf{33} (1978), no.~2, 97--154.
	
\bibitem{Ether}
I. M. H. Etherington.
\newblock Some problems of non-associative combinations (1).
\newblock {\em Edinburgh Math. Notes} \textbf{32} (1940), 1--6.

\bibitem{HKMP}
S. Hasui, H. Kuwata, M. Masuda and S. Park.
\newblock Classification of Toric Manifolds over an n-Cube with One Vertex Cut.
\newblock {\em Int. Math. Res. Not.} \textbf{2020}, No. 16, pp. 4890--4941.

\bibitem{Jurkiewicz}
    J. Jurkiewicz.
    \newblock Torus embeddins, polyhedra, $k^\ast$-actions and homology.
    \newblock Dissertationes Math. (Rozprawy Mat.) \textbf{236} (1985).

\bibitem{LMP_Catalan}
	E. Lee, M. Masuda, and S. Park.
	\newblock Toric {R}ichardson varieties of {C}atalan type and
	{W}edderburn--{E}therington numbers.
	\newblock arXiv:2105.12274v1, 2021.

\bibitem{LMP_directed_Dynkin}
	E. Lee, M. Masuda, and S. Park.
	\newblock {T}oric {S}chubert varieties and directed {D}ynkin diagrams.
	\newblock {\em in preparation}.

\bibitem{Luka}
J. \L ukasiewicz. \emph{Seleted Works} (L. Borkowski, ed.), North-Holland, Amsterdam, 1970.

\bibitem{Masuda-Suh}
M. Masuda and D. Y. Suh.
\newblock {C}lassification problems of toric manifolds via topology.
\newblock In Proceedings of Toric Topology, 273--286. 
\newblock Contemporary Mathematics \textbf{460}, 2008.

\bibitem{Riordan}
J. Riordan. 
\newblock The blossoming of Schr\"{o}der's fourth problem.
\newblock {\em Acta Math.} \textbf{137} (1976), 1--16.

\bibitem{Schroder}
E. Schr\"{o}der. Vier combinatorische Probleme, \emph{Z. f\"{u}r Math. Physik} \textbf{15} (1870), 361--376.

\bibitem{OEIS}
N.~J.~A. Sloane, editor.
\newblock The On-Line Encyclopedia of Integers Sequences (OEIS), published electronically at https://oeis.org.

\bibitem{Stanley96}
R.~P. Stanley.
\newblock Polygon dissections and standard Young tableaux.
\newblock {\em J. Combin. Theory Ser. A} \textbf{76} (1996), 175--177.

\end{thebibliography}
\end{document}